\theoremstyle{plain}
\newtheorem{thm}[subsection]{Theorem}
\newtheorem{prop}[subsection]{Proposition}
\newtheorem{cor}[subsection]{Corollary}
\newtheorem{lem}[subsection]{Lemma}
\newtheorem{sbprop}[subsubsection]{Proposition}
\theoremstyle{definition}
\newtheorem{defn}[subsection]{Definition}
\newtheorem{rem}[subsection]{Remark}
\newtheorem{para}[subsection]{}
\newtheorem{sbrem}[subsubsection]{Remark}
\newenvironment{pf}{\proof[\proofname]}{\endproof}
\numberwithin{equation}{subsection}
\begin{document}
\title
{Geometric polarized log Hodge structures with a base of log rank one}
\author{
Taro Fujisawa and Chikara Nakayama
}

\date{}
\maketitle

\newcommand\Cal{\mathcal}
\newcommand\define{\newcommand}

\define\bG{\bold G}
\define\bZ{\bold Z}
\define\bC{\bold C}
\define\bR{\bold R}
\define\bQ{\bold Q}
\define\bN{\bold N}
\define\bP{\bold P}
\define\cl{\mathrm{cl}}%
\define\fil{\mathrm{fil}}%
\define\fl{\mathrm{fl}}%
\define\gp{\mathrm{gp}}%
\define\fs{\mathrm{fs}}%
\define\an{\mathrm{an}}%
\define\et{\mathrm{et}}%
\define\mult{\mathrm{mult}}%
\define\sat{\mathrm{sat}}%
\define\Ker{\mathrm{Ker}\,}%
\define\Coker{\mathrm{Coker}\,}%
\define\Hom{\operatorname{\mathrm{Hom}}}%
\define\Aut{\operatorname{\mathrm{Aut}}}%
\define\Mor{\operatorname{\mathrm{Mor}}}%
\define\rank{\mathrm{rank}\,}%
\define\gr{\mathrm{gr}}%
\define{\cHom}{\operatorname{\mathcal{H}\mathit{om}}}
\define{\HOM}{\cHom}
\define{\cExt}{\operatorname{\mathcal{E}\mathit{xt}}}
\define{\cMor}{\operatorname{\mathcal{M}\mathit{or}}}
\define\cB{\Cal B}
\define\cO{\Cal O}
\renewcommand{\O}{\cO}
\define\cS{\Cal S}
\define\cM{\Cal M}
\define\cG{\Cal G}
\define\cH{\Cal H}
\define\cE{\Cal E}
\define\cF{\Cal F}

\newcommand{\ep}{\varepsilon}
\newcommand{\pe}{\frak p}
\newcommand{\Spec}{\operatorname{Spec}}
\newcommand{\val}{{\mathrm{val}}}
\newcommand{\bs}{\operatorname{\backslash}}
\newcommand{\Lie}{\operatorname{Lie}}

\renewcommand{\emph}{\it}

\newcommand{\ssm}{\smallsetminus}
\newcommand{\sig}{\sigma}
\newcommand{\Sig}{\Sigma}
\newcommand{\lan}{\langle}
\newcommand{\ran}{\rangle}
\newcommand{\fg}{\frak g}
\newcommand{\cD}{\check D}
\newcommand{\G}{\Gamma}
\define\Ext{\operatorname{\mathrm{Ext}}}%
\def\SL{\operatorname{SL}}
\def\Ad{\operatorname{Ad}}
\def\fsl{{\frak s\frak l}}
\renewcommand\Im{\mathrm{Im}\,}%
\define\End{\operatorname{\mathrm{End}}}%
\newcommand{\ket}{\mathrm{k\acute{e}t}}
\newcommand{\klog}{\mathrm{klog}}


\define{\cmh}{cohomological mixed Hodge }
\define{\kos}{\operatorname{\mathrm{Kos}}}
\define{\bnnZ}{{\bold Z}_{\ge 0}}
\define{\id}{\operatorname{\mathrm{id}}}
\define{\e}{\mathbf e}
\define{\limind}{\operatorname*{\varinjlim}}
\newcommand{\RomII}{\uppercase\expandafter{\romannumeral 2}\ }
\newcommand{\RomIV}{\uppercase\expandafter{\romannumeral 4}\ }
\define{\cC}{\Cal C}
\define{\ula}{\underline{\lambda}}
\define{\dlog}{\operatorname{dlog}}
\define{\tr}{\operatorname{Tr}}


\begin{abstract}
  We prove that a projective vertical exact log smooth morphism of fs log analytic spaces with a base of log rank one yields polarized log Hodge structures in the canonical way.
\end{abstract}
\renewcommand{\thefootnote}{\fnsymbol{footnote}}
\footnote[0]{\hspace{-18pt}2010 {\it Mathematics Subject Classification}. Primary 32S35; Secondary 14D07, 32G20\\
{\it key words}. Hodge theory, log geometry, log Hodge structure}

{\bf Contents}

\medskip

\S\ref{s:plh}. Polarized log Hodge structures

\S\ref{s:mt}. Main theorems

\S\ref{s:rev}. Review of \cite{FN2}

\S\ref{s:sss}. Compatibility of polarizations

\S\ref{s:dege}. Degeneration of log Hodge and log de Rham spectral sequences

\S\ref{s:pf}.  Proof of Theorem \ref{t:main}

\S\ref{s:alb}.  Log Picard varieties and log Albanese varieties

\bigskip

\section*{Introduction}
  We prove that a projective vertical exact log smooth morphism $X \to S$ of fs log analytic spaces with a base of log rank one yields polarized log Hodge structures.

  In the previous paper \cite{FN2}, we treated the case where the base $S$ is the standard log point and $X$ is strict semistable over $S$. 
  See the introduction of \cite{FN2} for a history of this fundamental problem in log Hodge theory. 
  Notice here that the result of \cite{FN2} is the first affirmative one to this problem in the case where the base is not necessarily log smooth over $\bC$. 
  The present paper studies the case of more general base and more general family.  
  Even in the case where the base is the standard log point, the result in the present paper is far more general than that of \cite{FN2} in the sense that $X$ is any projective vertical log smooth family. 
  In particular, it is not necessarily saturated (in the classical term, it admits the multiplicities etc.), and we need to work with k\'et (= kummer log \'etale) formulation. 

  Roughly speaking, the proof is a reduction to \cite{FN2} by a variant of semistable reduction theorem. 
  We have to confirm many compatibilities and invariance of data in varying the base. 
  So, even the case where the base is the standard log point requires more general base cases. 

  The organization of the paper is as follows. 
  After a review of necessary definitions in Section \ref{s:plh}, we state the main results in Section \ref{s:mt}. 
  The most technical part of the paper is Section \ref{s:sss}, where we enhance 
the result in \cite{FN2} (Section \ref{s:rev} is a review of the necessary part of \cite{FN2}) 
by taking care of compatibility of polarizations. 
  Section \ref{s:dege} is a proof of the degeneration of log Hodge and log de Rham spectral sequences, which holds for general base. 
  Section \ref{s:pf} is a proof for that the family yields polarized log Hodge structures under the assumption that the log rank of the base is zero or one. 
  Section \ref{s:alb} is an application to log Picard varieties and log Albanese varieties.

\bigskip

\noindent {\sc Acknowledgments.} 
The authors thank to Luc Illusie for valuable advice.
The first author is partially supported by JSPS, Kakenhi (C) No.\ 16K05107.
The second author is partially supported by JSPS, Kakenhi (C) No.\ 16K05093.
The second author thanks J.\ C.\ for suggesting this work. 

\bigskip

\noindent {\sc Notation and Terminology.} 
  For the basic notions on log geometry in the analytic context, see \cite{KN} and \cite{KU}. 
  For an analytic space $S$, we denote by ${\cal O}_S$ the sheaf of 
holomorphic functions over $S$. 
  For an fs log analytic space $S$, we denote by $M_S$ the log structure on 
it. 
  For the associated morphism of ringed spaces $\tau\colon (S^{\log},\cO_S^{\log}) \to (S,\cO_S)$ 
(resp.\ the associated morphism of ringed spaces $\tau^{\ket}\colon (S^{\log},\cO_S^{\klog}) \to (S,\cO_S)$,
resp.\ the associated morphism of ringed topoi
$\varepsilon \colon 
(S_{\ket}, \cO_S^{\ket}) \to (S, \cO_S)$),
  see \cite{KN} Sections 1 and 3, and \cite{KU} 2.2.3--2.2.4 
(resp.\ \cite{IKN} Section 3,
resp.\ \cite{IKN} Section 2). 

  For a monoid or a sheaf of monoids $M$, we denote by $M^{\times}$ the subgroup of the invertible elements and 
by $\overline M$ the quotient $M/M^{\times}$.

  A morphism of fs log analytic spaces $f\colon X \to S$ is called {\it vertical} 
if the quotient monoid of $\overline M_{X,x}$ by the image of $\overline M_{S,f(x)}$ is a group for any $x \in X$.
  It is called {\it projective} 
if its underlying morphism of analytic spaces is so.

  An fs log analytic space is called an {\it fs log point} if its underlying analytic space is $\Spec \bC$. 
  It is called the {\it standard log point} if it is isomorphic to $(\Spec \bC, \bN \oplus \bC^{\times})$.
  Here the structural homomorphism $\bN \oplus \bC^{\times} \to \bC$ of the log structure sends 
the generator $1$ of $\bN$ to zero.

  For simplicity, we often denote a pullback of a sheaf on a space by the same symbol 
as that for the original sheaf. 

\section{Polarized log Hodge structure}\label{s:plh}
  We review the definition of polarized log Hodge structures in the k\'et sense 
(cf.\ \cite{KMN} 2.3).
  We first recall the nonk\'et version, which is also used in the following sections. 

\begin{para}
\label{triple}
  Let $S$ be an fs log analytic space. 
  Consider a triple $(L_{\bZ}, (F^p)_{p\in \bZ}, (\ ,\ ))$ 
consisting of a unipotent locally constant sheaf of free $\bZ$-modules $L_{\bZ}$ on $S^{\log}$, of a decreasing filtration 
$(F^p)_{p\in \bZ}$ of an $\cO_S$-module $L_{\cal O}$ endowed with an isomorphism from it to $\tau_*(L_{\bZ} \otimes \cO_S^{\log})$ by sub $\cO_S$-modules which are locally direct summands, and of a nondegenerate $\bQ$-bilinear form $(\ ,\ )$ on $L_{\bQ}=\bQ \otimes L_{\bZ}$.
  Below we identify $(\tau^*L_{\cal O}) \otimes \cO_S^{\log}$ and $L_{\bZ} \otimes \cO_S^{\log}$ via the given isomorphism (cf.\ \cite{KU} {\sc Proposition} 2.3.3 (i)).
\end{para}

\begin{para}
\label{plh}
  Let $w$ be an integer. 
  If a triple as in \ref{triple} satisfies the following condition (1), we call it a {\it nonk\'et prePLH (= pre-polarized log Hodge structure) on $S$ of weight $w$}. 

(1) (Orthogonality.) $(\tau^*F^p,\tau^*F^q)=0$ if $p+q>w$. 
  Here $(\ ,\ )$ is regarded as the natural extension to $\cO_S^{\log}$-bilinear form. 

  If a nonk\'et prePLH on $S$ of weight $w$ satisfies the following conditions (2) and (3) after pulling back to each point $s \in S$, we call it a {\it nonk\'et PLH (= polarized Hodge structure) on $S$ of weight $w$}. 
(It is simply called PLH on $S$ of weight $w$ in \cite{KU} 2.4.) 

(2) (Positivity.) Assuming that $S$ is an fs log point with the unique point $s$, let $t \in S^{\log}$ and $h \in \Hom_{\bC\text{-}{\mathrm{alg}}}(\cO_{S,t}^{\log}, \bC)$. 
  If the induced homomorphism $$M_{S,s} \to \bC; a \mapsto \exp(h(\log(a)))$$ 
is sufficiently near to the structural homomorphism of the log structure with respect to the topology of simple convergence of $\bC$-valued functions, then the specialization 
$$(L_{\bZ, t}, (F^p(h))_{p \in \bZ}, (\ ,\ )_t)$$ 
of the triple by $h$ is a polarized Hodge structure of weight $w$ in the usual sense. 

(3) (Griffiths transversality.) 
  Assuming that $S$ is an fs log point, the image of 
$F^p$ by $d \otimes 1 \colon \cO_S^{\log} \otimes L_{\bZ} \to 
\omega_S^{1,\log}\otimes L_{\bZ}$ is contained in $\omega_S^{1,\log}\otimes F^{p-1}$ for any $p$.
\end{para}

\begin{para}
\label{ket_triple}
  Next we define the k\'et version. 
  Let $S$ be an fs log analytic space. 
  Consider a triple $(L_{\bZ}, (F^p)_{p\in \bZ}, (\ ,\ ))$ 
consisting of a quasi-unipotent locally constant sheaf of free $\bZ$-modules $L_{\bZ}$ on $S^{\log}$, of a decreasing filtration 
$(F^p)_{p\in \bZ}$ of an $\cO_S^{\ket}$-module $L_{{\cal O}^{\ket}}$ endowed with an isomorphism from it to 
$\tau^{\ket}_*(L_{\bZ} \otimes \cO_S^{\klog})$ 
by sub $\cO_S^{\ket}$-modules which are locally direct summands, and of a nondegenerate $\bQ$-bilinear form $(\ ,\ )$ on $L_{\bQ}=\bQ \otimes L_{\bZ}$.
  Below we identify $(\tau^*L_{{\cal O}^{\ket}}) \otimes \cO_S^{\klog}$ and $L_{\bZ} \otimes \cO_S^{\klog}$ via the given isomorphism (cf.\ the beginning of the proof of \cite{IKN} {\sc Theorem} (4.4)).
\end{para}

\begin{para}
  Let $w$ be an integer. 
  If a triple as in \ref{ket_triple} satisfies the following condition (1), we call it a {\it prePLH on $S$ of weight $w$}. 

(1) (Orthogonality.) $(\tau^{\ket*}F^p,\tau^{\ket*}F^q)=0$ if $p+q>w$. 
  Here $(\ ,\ )$ is regarded as the natural extension to $\cO_S^{\klog}$-bilinear form. 

  If a prePLH on $S$ of weight $w$ satisfies the following conditions (2) and (3) after pulling back to each point $s \in S$, we call it a {\it PLH on $S$ of weight $w$}. 

(2) (Positivity.) Assuming that $S$ is an fs log point with the unique point $s$, let $t \in S^{\log}$ and $h \in 
\Hom_{\bC\text{-}{\mathrm{alg}}}(\cO_{S,t}^{\klog}, \bC)=\Hom_{\bC\text{-}{\mathrm{alg}}}(\cO_{S,t}^{\log}, \bC)$.
  If the induced homomorphism $$M_{S,s} \to \bC; a \mapsto \exp(h(\log(a)))$$ 
is sufficiently near to the structural homomorphism of the log structure with respect to the topology of simple convergence of $\bC$-valued functions, then the specialization 
$$(L_{\bZ, t}, (F^p(h))_{p \in \bZ}, (\ ,\ )_t)$$ 
of the triple by $h$ is a polarized Hodge structure of weight $w$ in the usual sense. 

(3) (Griffiths transversality.) 
  Assuming that $S$ is an fs log point, the image of 
$F^p$ by $d \otimes 1 \colon \cO_S^{\klog} \otimes L_{\bZ} \to 
\omega_S^{1,\klog}\otimes L_{\bZ}$ is contained in $\omega_S^{1,\klog}\otimes F^{p-1}$ for any $p$.
\end{para}

\begin{para}
\label{relation}
  The relation between the nonk\'et versions and k\'et versions is as follows. 

  Let $S$ and $w$ be as in the above. 
  The pullback by the forgetting log morphism 
$$\varepsilon \colon (S_{\ket}, \cO_S^{\ket}) \to (S, \cO_S)$$
of a nonk\'et prePLH (resp.\ nonk\'et PLH) on $S$ of weight $w$ is a 
prePLH (resp.\ PLH) on $S$ of weight $w$. 
  This gives a category equivalence between the category of the nonk\'et ones to the full subcategory of the category 
of the k\'et ones whose lattice $L_{\bold Z}$ has unipotent local monodromies (cf.\ \cite{KMN} Remark 2.4). 
\end{para}

\section{Main theorems}\label{s:mt}
  In this section, we describe the main results of this paper. 

\begin{para}
\label{pairing}
  Let $f\colon X \to S$ be a projective, vertical, exact and log smooth morphism of fs log analytic spaces. 

  Fix an invertible $\cO_X$-module $L$ which is relatively ample with respect to $S$. 
  Let $c(L)$ be the image of the class of $L$ by 
$$R^1f_*\cO_X^* \to R^2f_*\bZ(1) \to R^2f^{\log}_*\bZ(1).$$

  Let $q$ be an integer, and let 
$$H^q_{\bZ}=R^qf^{\log}_*\bZ, \quad 
  H^q_{\bQ}=R^qf^{\log}_*\bQ.$$
  We define a $\bQ$-bilinear pairing 
$(\ ,\ )\colon H^q_{\bQ} \times H^q_{\bQ} \to \bQ$. 

  By taking cup product with $(2\pi \sqrt{-1})^{-1}c(L)$,
a morphism
$$
l=(2\pi \sqrt{-1})^{-1}c(L) \cup :
H^q_{\bQ} \to H^{q+2}_{\bQ}
$$
is defined for all $q$.

  Let $n$ be the relative dimension of $X$ over $S$, which is a locally constant function (see Proposition \ref{p:reldim} below). 
  Assume $q \le n$, where $q$ is regarded as a constant function on $S$. 
  Let $H^q_{\bQ,\mathrm{prim}}$ be the kernel of 
$$l^{n-q+1} \colon H^q_{\bQ} \to H^{2n-q+2}_{\bQ}.$$ 
  Here, $H^{2n-q+2}_{\bQ}$ means the sheaf of $\bQ$-vector spaces which is isomorphic to $H^{2n(i)-q+1}_{\bQ}$ on each connected component $S_i$ of $S$, where $n(i)$ is the value of $n$ on $S_i$. 
  The meaning of $l^{n-q+1}$ is similar. 
\end{para}

  The next theorem will be proved in Section \ref{s:dege}. 

\begin{thm}[Lefschetz decomposition]
\label{t:Lef}
  The homomorphism 
$$
\bigoplus_j H^j_{\bQ,\mathrm{prim}} \to H^q_{\bQ};
(a_j)_j \mapsto \sum_j l^{\frac{q-j}2}(a_j),
$$
where $j$ ranges over all integers such that $j \le n, j \le q$, and $j \equiv q\bmod 2$, is an isomorphism. 
\end{thm}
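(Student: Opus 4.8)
The plan is to reduce the statement to the Hard Lefschetz theorem and then to deduce the decomposition by a purely formal $\fsl$$_2$-representation-theoretic argument. Concretely, writing $l$ for the Lefschetz operator, the asserted map
$$\bigoplus_j H^j_{\bQ,\mathrm{prim}} \to H^q_{\bQ}, \quad (a_j)_j \mapsto \sum_j l^{(q-j)/2}(a_j)$$
is claimed to be an isomorphism, and the single input one needs is that for each $q \le n$ the iterate $l^{n-q}\colon H^q_{\bQ} \to H^{2n-q}_{\bQ}$ is an isomorphism of $\bQ$-sheaves on $S^{\log}$. Since $H^q_{\bQ}=R^qf^{\log}_*\bQ$ and the assertion involves only $\bQ$-sheaves, I would first observe that it is local on $S$ and may be checked on stalks. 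By the properness of $f^{\log}$ (which follows from projectivity) together with proper base change, the stalk of $H^q_{\bQ}$ at a point $t \in S^{\log}$ is the cohomology $H^q(X^{\log}_t,\bQ)$ of the fiber, and $l$ specializes to cup product with the restriction of $(2\pi\sqrt{-1})^{-1}c(L)$. Thus the sheaf-level Hard Lefschetz statement reduces to Hard Lefschetz for each fiber $X^{\log}_t$ with the restricted class.

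First I would establish Hard Lefschetz on the fibers. Because $f$ is vertical, exact and log smooth, $f^{\log}$ is a topological fiber bundle with compact fibers, and each $X^{\log}_t$ is a compact oriented manifold of dimension $2n$ satisfying Poincar\'e duality. The essential point is that $H^q(X^{\log}_t,\bQ)$ carries a pure polarized Hodge structure of weight $q$ --- this is exactly what the degeneration of the log Hodge and log de Rham spectral sequences of Section \ref{s:dege} provides after pulling back to the log point $t$ --- and that $c(L)$ restricts to a polarizing class of type $(1,1)$. Hence classical Hard Lefschetz for polarized Hodge structures applies fiberwise, so $l^{n-q}$ is an isomorphism on each stalk and therefore as a morphism of sheaves. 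Where convenient, this fiberwise statement can instead be obtained via the semistable-reduction reduction to \cite{FN2}: after a k\'et base change the family becomes strict semistable and $X^{\log}_t$ is the nearby smooth projective fiber, for which Hard Lefschetz is classical; since $l$ and the $\bQ$-cohomology are compatible with such base change, this suffices.

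Granting Hard Lefschetz, the decomposition is formal and insensitive to the log structure. On $\bigoplus_q H^q_{\bQ}$ one has the operator $l$ raising degree by $2$, the grading operator acting on $H^q_{\bQ}$ by $q-n$, and --- produced from Hard Lefschetz by the standard construction (Deligne's formal argument, or the $\fsl$$_2$-lemma) --- a dual operator lowering degree by $2$, together satisfying the $\fsl$$_2$-commutation relations. The representation theory of $\fsl$$_2$ then decomposes $\bigoplus_q H^q_{\bQ}$ into a direct sum of irreducible subrepresentations, whose extremal weight vectors in degrees $j \le n$ are precisely the primitive classes $H^j_{\bQ,\mathrm{prim}}=\Ker(l^{n-j+1})$; reading off the degree-$q$ part yields exactly the asserted isomorphism $\bigoplus_j l^{(q-j)/2} H^j_{\bQ,\mathrm{prim}} \to H^q_{\bQ}$.

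The main obstacle is the fiberwise Hard Lefschetz, that is, the positivity input: one must know that the cohomology of the log smooth projective fiber $X^{\log}_t$ is a pure polarized Hodge structure and that $c(L)$ furnishes the polarizing Lefschetz operator. This is precisely where the degeneration of the spectral sequences and the polarization results of the paper --- ultimately the reduction to \cite{FN2} --- enter. Once the fibers are known to behave Hodge-theoretically like smooth projective varieties equipped with an ample class, the remainder of the argument is formal $\fsl$$_2$ linear algebra.
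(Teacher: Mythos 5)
Your formal skeleton --- hard Lefschetz on each stalk plus the standard $\fsl_2$/induction argument yields the primitive decomposition, and log proper base change for $f^{\log}$ reduces everything to the fibers $X^{\log}_t$ --- is sound, and the localization step agrees with the paper, which indeed checks the statement at points via \cite{KN:lc} {\sc Proposition} 5.1 (using the exactness of $f$). The gap is in your ``essential point,'' the fiberwise hard Lefschetz. First, $H^q(X^{\log}_t,\bQ)$ together with the limit Hodge filtration is {\it not} a pure polarized Hodge structure of weight $q$: it underlies a polarized {\it log} Hodge structure, i.e.\ a limiting mixed Hodge structure whose weight filtration is the monodromy weight filtration, and purity fails as soon as the local monodromy is nontrivial; only the specializations $F(h)$ for $h$ near the boundary are pure. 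Second, even for a genuinely pure polarized Hodge structure, hard Lefschetz is not a formal consequence of the existence of a polarization --- it is geometric input --- and in this paper the pairing $(\ ,\ )$ is {\it defined} using Theorem \ref{t:Lef} (see the paragraph following its statement), so any argument presupposing a polarization of $H^q_{\bQ}$ is circular. Note also that the paper's logical order is the reverse of yours: hard Lefschetz (Proposition \ref{p:HL}) is deduced {\it from} Theorem \ref{t:Lef}, which is itself proved by reduction to the Lefschetz decomposition for the Koszul complex in \cite{F}.

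Your fallback via semistable reduction is essentially the paper's actual route, but it treats as ``classical'' what carries the substantive content. There is in general no nearby smooth projective fiber: a projective strict log deformation need not be smoothable, and $H^q(X^{\log}_t,\bQ)$ is instead compared with $H^q(X,K_{\bQ})$, the cohomology of the Koszul complex of \cite{F}, where the Lefschetz-type statements for the limiting structure are proved. To transport these back to $H^q_{\bQ}$ one needs (i) Proposition \ref{p:ssrt}, producing a strict log deformation only after a k\'et base change $s_n \to s$ and a log blow-up, (ii) the invariance of log Betti cohomology under log blow-up together with $Rp_*\omega^q_{X'/0}=\omega^q_{X/0}$ as in \ref{potentialsss}, and above all (iii) the compatibility of the comparison isomorphism $H^q_{\bQ,z}\cong H^q(X,K_{\bQ})$ with the class $c(L)$, the cup product, and the trace map --- this is the whole of Section \ref{s:sss}, and it is exactly what guarantees that the operator you call $l$ on the left matches the Lefschetz operator of \cite{F} on the right. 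Your proposal supplies none of (iii), so the reduction does not go through as written.
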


By this theorem, we can define $(\ ,\ )$ in the usual way (cf.\ \cite{KMN} 8.2), 
that is, we define it as the unique $\bQ$-bilinear form such that the subspaces 
$l^{\frac{q-j}2}H^j_{\bQ,\mathrm{prim}}$
and $l^{\frac{q-k}2}H^k_{\bQ,\mathrm{prim}}$
of $H^q_{\bQ}$ for $j, k$ as above are orthogonal to each other under $(\ , \ )$
if $j \not= k$,
and
$$
(l^{\frac{q-j}2}(a), l^{\frac{q-j}2}(b))
$$
for $j$ as above and for $a, b \in H^j_{\bQ,\mathrm{prim}}$ is the image of $(-1)^{\frac{j(j-1)}2} a \otimes b$ under 
$$H^j_{\bQ} \otimes H^j_{\bQ} \to H^{2j}_{\bQ} \to H^{2n}_{\bQ} \to \bQ,$$
where the first map is the cup product, the second is induced by
$l^{n-j}$, 
and the last one is the trace map 
(cf.\ \cite{NO} Section 5).

  We also have the following theorem.
  This is an analytic version of \cite{IKN} {\sc Theorem} $7.1$. 

\begin{thm}
\label{t:HdRss}
  Let $f\colon X \to S$ be a projective, vertical, exact and log smooth morphism of fs log analytic spaces. 
  Then the following hold. 

$(1)$ The log Hodge to log de Rham spectral sequence 
$$E_1^{pq} = R^qf_{\ket*}\omega^{p}_{X/S,\ket} \Rightarrow 
R^{p+q}f_{\ket*}\omega^{\cdot}_{X/S,\ket}$$
degenerates from $E_1$. 

$(2)$ The sheaves $R^qf_{\ket*}\omega^{p}_{X/S,\ket}$ for all $p, q$ are locally free of finite type over $S_{\ket}$ and commute with base change by any morphism of fs log analytic spaces $S' \to S$. 
\end{thm}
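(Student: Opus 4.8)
The plan is to reduce both assertions to the case where $S$ is an fs log point and then, by a k\'et base change, to a semistable situation over the standard log point where the degeneration is already available through \cite{FN2} and classical Hodge theory. Since local freeness, base change, and degeneration can all be tested k\'et-locally on $S$ and after base change to points, I would first extract the numerical content fiberwise and then globalize by a semicontinuity argument. The two parts of the theorem are intertwined: the degeneration in $(1)$ is equivalent to the constancy of the relevant fiber dimensions, which in turn forces the local freeness and base change in $(2)$.

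First I would establish that the total log de Rham cohomology $R^m f_{\ket*}\omega^{\cdot}_{X/S,\ket}$ is locally free of finite type over $S_{\ket}$ and compatible with arbitrary base change $S' \to S$. The key input is the log de Rham comparison theorem, which identifies this sheaf, after the appropriate change of rings, with $R^m f^{\log}_*\bC$. Because $f$ is projective and $f^{\log}$ is proper for the induced log structure, the topological proper base change theorem (see \cite{KN}) shows that $R^m f^{\log}_*\bZ$ is locally constant of finite rank and commutes with base change; relative GAGA for the projective analytic $f$ then transfers this to the coherent side, giving the local freeness and base change of $R^m f_{\ket*}\omega^{\cdot}_{X/S,\ket}$, whose fiber dimension is therefore a locally constant function of $s$.

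Next I would prove the degeneration fiberwise. Fixing $s \in S$ and pulling back, $f$ becomes a projective, vertical, exact, log smooth morphism over the fs log point $s$. By a variant of the semistable reduction theorem, a suitable Kummer log \'etale base change reduces the pullback to the strictly semistable case over the standard log point treated in \cite{FN2}; since $\omega^{p}_{X/S,\ket}$ and the spectral sequence are insensitive to such covers, the degeneration descends from that case. This yields, for every $m$ and every $s$, the equality $\sum_{p+q=m}\dim_{\bC} R^q f_{\ket*}\omega^{p}_{X/S,\ket}(s) = \dim_{\bC} R^m f_{\ket*}\omega^{\cdot}_{X/S,\ket}(s)$.

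Finally I would combine these. The sheaves $R^q f_{\ket*}\omega^{p}_{X/S,\ket}$ are coherent by Grauert coherence adapted to the k\'et site for the projective $f$, so each fiber dimension $h^{pq}(s)$ is upper semicontinuous, while $\sum_{p+q=m} h^{pq}(s)$ is bounded below by the locally constant total dimension, with equality exactly at the points of degeneration. The fiberwise degeneration of the previous step forces equality everywhere, so the sum is locally constant; being a sum of upper semicontinuous functions equal to a constant, each $h^{pq}$ is then locally constant. This gives the degeneration of the family spectral sequence in $(1)$ and, by the standard cohomology-and-base-change criterion, the local freeness of $R^q f_{\ket*}\omega^{p}_{X/S,\ket}$ together with its compatibility with base change in $(2)$. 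The main obstacle is the fiberwise reduction to the semistable case and the verification that the k\'et Hodge complex and its spectral sequence are genuinely unaffected by the Kummer covers involved, so that the deep degeneration input of \cite{FN2} applies; the analytic comparison and GAGA steps, though technical, should be routine once the topological proper base change for $f^{\log}$ is in hand.
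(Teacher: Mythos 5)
Your overall strategy coincides with the paper's: establish local freeness and base change for the total de Rham cohomology $R^mf_{\ket*}\omega^{\cdot}_{X/S,\ket}$ via the comparison with $R^mf^{\log}_*\bC$ and topological proper base change, reduce the degeneration to fibers by Deligne's length/semicontinuity argument, and settle the fs log point case by passing to a semistable situation where \cite{FN2} applies. However, there are two genuine gaps in the fiberwise reduction. First, a Kummer log \'etale base change of the standard log point alone does \emph{not} produce a strict semistable family: the semistable reduction statement actually used (Proposition \ref{p:ssrt}, from Vidal) yields a strict log deformation only after composing the Kummer base change $s_n \to s$ with a \emph{log blow-up} $X' \to X\times_s s_n$ of the total space. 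Transporting the degeneration back across this log blow-up is a separate, nontrivial input: one needs the log blow-up invariance of log Betti cohomology (\cite{KN:lc}) and the isomorphism $Rp_*\omega^q_{X'/0}\cong\omega^q_{X/0}$, which the paper reduces to $\cO_X\overset{\cong}{\to}Rp_*\cO_{X'}$ via \cite{KKMS}. Your proposal omits this modification of the total space entirely, and without it the reduction to \cite{FN2} does not go through.

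Second, your claim that ``$\omega^p_{X/S,\ket}$ and the spectral sequence are insensitive to such covers'' is false without a prior reduction to the saturated case. The paper first uses projectivity and exactness, via \cite{IKN} {\sc Proposition} (A.4.3), to arrange k\'et-locally that $f$ is saturated; only then does the base change by $\bN\overset{n}{\to}\bN$ leave the underlying morphism and the log de Rham complex unchanged (and only then do the k\'et and nonk\'et spectral sequences agree, which is what lets one quote the nonk\'et result of \cite{FN2}). This is also precisely where exactness enters: the remark following the theorem gives a log blow-up counterexample showing that part $(2)$ fails without exactness, so any correct proof must use it, and yours never does. Repairing these two points essentially forces you into the paper's argument (saturation first, then Kummer base change plus log blow-up, then the invariance statements).
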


\begin{sbrem} 
(1)  Without exactness, the conclusion of Theorem \ref{t:HdRss} (2) fails. 
  A counter example is as follows. 
  Let $S=\Spec \bC[\bN^2]/(x^2, y^2)$, where $x,y$ is the standard basis of $\bN^2$. 
  Let $X$ be the log blow-up (\cite{IKN} {\sc Definition} (6.1.1))
of $S$ with respect to the ideal generated by $x$ and $y$. 
  Then the natural homomorphism $\Gamma(S,\O_S) \to \Gamma(X,\O_X)$ is not injective so that $f_{\ket*}{\cal O}_S^{\ket}$ is not locally free. 

(2)  As is well-known, we cannot replace the projectivity in this theorem by the properness even in the classical case 
(an Iwasawa manifold over a point is a counter example (\cite{Nakamura}, \cite{Sakane})).
  On the other hand, it is plausible that we can weaken projectivity by log K\"ahlerness (cf.\ \cite{KKN} Section 9). 
\end{sbrem}

\begin{cor}
  Under the assumption in Theorem $\ref{t:HdRss}$, the following hold. 

$(1)$ The nonk\'et log Hodge to log de Rham spectral sequence 
$$E_1^{pq} = R^qf_{*}\omega^{p}_{X/S} \Rightarrow 
R^{p+q}f_{*}\omega^{\cdot}_{X/S}$$
degenerates from $E_1$. 

$(2)$ Assume that the stalk of $(M_S/\cO^{\times}_S)_s$ for any $s\in S$ is a free monoid.
  Then the sheaves $R^qf_{*}\omega^{p}_{X/S}$ for all $p, q$ are locally free of finite type over $S$. 
\end{cor}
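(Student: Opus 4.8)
\begin{pf}
The plan is to deduce both statements from Theorem \ref{t:HdRss} by descending along the forgetting-log morphisms $\varepsilon_S\colon S_{\ket}\to S$ and $\varepsilon_X\colon X_{\ket}\to X$, which fit into a commutative square with $f$ and $f_{\ket}$. The basic input is the comparison $R\varepsilon_{X*}\varepsilon_X^*\cF=\cF$ for a coherent $\cO_X$-module $\cF$: locally in the k\'et topology the covers are finite Kummer covers, so, working over $\bC$, the relevant finite-group cohomology of $\bQ$-vector spaces vanishes, giving $R^i\varepsilon_{X*}\varepsilon_X^*\cF=0$ for $i>0$ and $\varepsilon_{X*}\varepsilon_X^*\cF=\cF$. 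Applying this to $\cF=\omega^p_{X/S}$ (so that $\omega^p_{X/S,\ket}=\varepsilon_X^*\omega^p_{X/S}$) and composing with $f$ yields, compatibly with the stupid (Hodge) filtration,
$$Rf_*\omega^{\cdot}_{X/S}=R\varepsilon_{S*}Rf_{\ket*}\omega^{\cdot}_{X/S,\ket}.$$
The same vanishing shows $R^i\varepsilon_{S*}R^qf_{\ket*}\omega^p_{X/S,\ket}=0$ for $i>0$, so that $\varepsilon_{S*}R^qf_{\ket*}\omega^p_{X/S,\ket}=R^qf_*\omega^p_{X/S}$.

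For $(1)$, the displayed comparison identifies the nonk\'et Hodge to de Rham spectral sequence with the image, under the functor $\varepsilon_{S*}$ (exact on the sheaves in question by the vanishing above), of the k\'et one: the $E_1$-terms correspond by the last identity, and the differentials $d_r$ of the nonk\'et sequence are $\varepsilon_{S*}$ of those of the k\'et sequence. Since the k\'et spectral sequence degenerates at $E_1$ by Theorem \ref{t:HdRss}$(1)$, i.e.\ all its $d_r$ with $r\ge 1$ vanish, applying $\varepsilon_{S*}$ forces the nonk\'et $d_r$ to vanish as well. Note that no hypothesis on $\overline M_S$ is needed here.

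For $(2)$, I work locally and fix a chart with $\overline M_{S,s}=\bN^r$. Write $\cE=R^qf_{\ket*}\omega^p_{X/S,\ket}$, which is locally free over $\cO_S^{\ket}$ and compatible with base change by Theorem \ref{t:HdRss}$(2)$. Its k\'et monodromy is a locally constant, finite-order automorphism, so there is a standard Kummer cover $\pi\colon S'\to S$ of some degree $n$ trivializing it; because $\overline M_S$ is free, $\bC[\tfrac1n\bN^r]$ is free over $\bC[\bN^r]$ and hence $\pi$ is finite and faithfully flat on underlying spaces, Galois with group $G$. On $S'$ the monodromy is trivial, so $\cE|_{S'}$ descends to a classical locally free $\cO_{S'}$-module $\cG$ with $\varepsilon_{S'*}(\cE|_{S'})=\cG$; by base change and the identity above, $\cG=R^qf'_*\omega^p_{X'/S'}$ for the base-changed family. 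Taking $G$-invariants recovers the pushforward over $S$, that is, $R^qf_*\omega^p_{X/S}=\cG^{G}$. Since we are over $\bC$, $\cG^{G}$ is the image of the averaging idempotent $\tfrac1{|G|}\sum_{g\in G}g$ acting $\cO_S$-linearly on $\cG$; as $\cG$ is locally free over $\cO_{S'}$ and $\cO_{S'}$ is finite flat over $\cO_S$, $\cG$ is locally free over $\cO_S$, and the image of an idempotent endomorphism of a locally free module is a direct summand, hence locally free. This gives the assertion.

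The main obstacle is the step in $(2)$ turning k\'et local freeness into classical local freeness: this is exactly where freeness of $\overline M_S$ enters, guaranteeing that the trivializing Kummer cover is faithfully flat so that $G$-invariants can be taken without losing local freeness. Without it the multiplicities obstruct the descent, in line with the fact that the classical pushforward need not be locally free in general; the remaining care is to check that $\cE|_{S'}$ genuinely descends to a classical sheaf once the finite monodromy is trivialized and that the $\varepsilon$-pushforward is exact on the sheaves in play, both of which follow from the char-$0$ vanishing recorded above.
\end{pf}
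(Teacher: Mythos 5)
Your proof is correct and follows essentially the same route as the paper, which simply invokes the deduction of \cite{IKN} {\sc Corollary} (7.2) from {\sc Theorem} (7.1): descent along $\varepsilon$ using $R^i\varepsilon_*=0$ ($i>0$) on locally free $\cO^{\ket}$-modules, with freeness of $\overline M_{S,s}$ ensuring the trivializing Kummer cover is finite flat so that the $G$-averaging idempotent exhibits the invariants as a direct summand of a locally free sheaf. The one point to tighten is in $(1)$: rather than asserting that the nonk\'et differentials are $\varepsilon_{S*}$ of the k\'et ones (the spectral sequence of $R\varepsilon_{S*}$ of a filtered complex is not literally obtained by applying $\varepsilon_{S*}$ page by page), reformulate $E_1$-degeneration as injectivity of $H^n(F^{\ge p})\to H^n$, note that the k\'et degeneration makes each $R^nf_{\ket*}\omega^{\ge p}_{X/S,\ket}$ an iterated extension of locally free, hence $\varepsilon_{S*}$-acyclic, sheaves, and conclude by left exactness of $\varepsilon_{S*}$.
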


  This is deduced from Theorem \ref{t:HdRss} as in the same way as \cite{IKN} {\sc Corollary} (7.2) is deduced from \cite{IKN} {\sc Theorem} (7.1).

  The next is a variant of this corollary. 

\begin{thm}
\label{t:variant}
  Let the notation and the assumption be as in Theorem $\ref{t:HdRss}$. 
  Assume further that $f$ is saturated. 
  Then the following hold. 

$(1)$ The nonk\'et log Hodge to log de Rham spectral sequence 
$$E_1^{pq} = R^qf_{*}\omega^{p}_{X/S} \Rightarrow 
R^{p+q}f_{*}\omega^{\cdot}_{X/S}$$
degenerates from $E_1$. 

$(2)$ The sheaves $R^qf_{*}\omega^{p}_{X/S}$ for all $p, q$ are locally free of finite type over $S$ and commute with base change by any morphism of fs log analytic spaces $S' \to S$. 

$(3)$ The (k\'et) log Hodge to log de Rham spectral sequence in Theorem $\ref{t:HdRss}$ $(1)$ is the pullback of the nonk\'et one in the above $(1)$. 
\end{thm}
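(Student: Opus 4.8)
The plan is to deduce all three assertions from the k\'et statements of Theorem \ref{t:HdRss} by using the saturation hypothesis to control the forgetting-log morphism $\varepsilon\colon (S_{\ket},\cO_S^{\ket}) \to (S,\cO_S)$. The heart of the matter is the base change isomorphism
$$\varepsilon^* R^qf_*\omega^p_{X/S} \xrightarrow{\ \sim\ } R^qf_{\ket*}\omega^p_{X/S,\ket} \qquad (\ast)$$
compatibly with the two Hodge-to-de Rham spectral sequences. Once $(\ast)$ is available, part $(3)$ is essentially its content, since $\omega^p_{X/S,\ket}$ is by definition the pullback $\varepsilon^*\omega^p_{X/S}$ on $X$, so that $\varepsilon^*$ carries the nonk\'et $E_1$-page to the k\'et one.

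To establish $(\ast)$ I would argue locally on $S$ and pass to a Kummer log \'etale cover $S'\to S$ on which the log structure becomes sufficiently simple (say $\overline M_{S'}$ free and the monodromy unipotent). The decisive use of saturation is that, for a saturated $f$, the fs fiber product $X\times_S S'$ agrees with the fiber product formed in the category of integral log spaces, so that $f'\colon X':=X\times_S S'\to S'$ is again saturated and log smooth and its underlying morphism is flat; no further saturation, which would destroy flatness as in the Remark following Theorem \ref{t:HdRss}, is forced upon us. Consequently the formation of $\omega^p_{X'/S'}$ commutes with this base change, the k\'et pushforward over the now-strict base $S'$ is computed by the classical one, and the classical cohomology-and-base-change theorem for the projective morphism $f'$ yields $(\ast)$ over $S'$, which then descends to $S$.

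Granting $(\ast)$ and $(3)$, part $(1)$ follows formally: the functor $\varepsilon^*$ is exact and faithful (indeed $\varepsilon_*\cO_S^{\ket}=\cO_S$ and Kummer covers are flat), hence it commutes with the formation of the spectral sequence and reflects vanishing of the differentials; applying it to the nonk\'et spectral sequence and identifying the outcome with the k\'et one via $(3)$, the degeneration at $E_1$ proved in Theorem \ref{t:HdRss}$(1)$ descends to the nonk\'et spectral sequence. For part $(2)$, coherence of $R^qf_*\omega^p_{X/S}$ follows from the projectivity of $f$ and the coherence of $\omega^p_{X/S}$; by Theorem \ref{t:HdRss}$(2)$ and $(\ast)$ its pullback $\varepsilon^*R^qf_*\omega^p_{X/S}$ is locally free of finite type over $S_{\ket}$, so faithfully flat descent along a Kummer cover forces $R^qf_*\omega^p_{X/S}$ to be locally free over $S$. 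Base change is handled in the same spirit: for $S'\to S$ the nonk\'et base change map becomes, after applying $\varepsilon^*$ and using $(\ast)$, the k\'et base change map of Theorem \ref{t:HdRss}$(2)$, which is an isomorphism, and since $\varepsilon^*$ reflects isomorphisms the nonk\'et map is an isomorphism too.

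The main obstacle is $(\ast)$, the compatibility of $\varepsilon^*$ with $Rf_*$; this is precisely where saturation is indispensable, as it is what keeps the relevant Kummer base changes inside the fs (saturated) category and keeps their underlying morphisms flat, allowing the classical coherent base change theorem to be brought to bear. All the remaining steps are then formal consequences of the exactness, faithfulness, and descent properties of the forgetting-log morphism $\varepsilon$.
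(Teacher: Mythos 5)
There is a genuine gap, and it sits exactly at your step $(\ast)$. A Kummer log \'etale cover $S'\to S$ never trivializes the log structure: it only extracts roots of $M_S$, and $\overline M_{S'}$ has the same rank as $\overline M_S$. So there is no ``now-strict base $S'$'' over which the k\'et pushforward is computed by the classical one, and the k\'et site of $S'$ remains nontrivial. Moreover, the classical cohomology-and-base-change theorem gives local freeness of $R^qf_*\omega^p_{X/S}$ and compatibility with base change only when the fiberwise dimensions $\dim H^q(X_s,\omega^p_{X_s/s})$ are locally constant; that constancy is not a consequence of flatness but is essentially equivalent to the $E_1$-degeneration you are trying to prove (in the paper it is \emph{deduced} from degeneration by comparing lengths against the locally free abutment). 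Invoking it to establish $(\ast)$ is therefore circular. Once $(\ast)$ is unavailable, the rest of your argument --- degeneration of the nonk\'et spectral sequence by faithfulness of $\varepsilon^*$, and local freeness by descent --- has nothing to descend from, since identifying the nonk\'et $E_1$-page with the k\'et one is itself one of the assertions to be proved.

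The paper's logic runs in the opposite direction. Theorem \ref{t:variant} is obtained from Lemma \ref{l:HD}, whose part (1) is the \emph{nonk\'et} degeneration with locally free initial terms; parts (2) and (3) (the k\'et statements and the comparison, i.e.\ your $(\ast)$ for the complexes $\omega^{[a,b]}_{X/S}$) are then deduced from (1), following \cite{IKNe}, using that the adjunction $\cO_S\to R\varepsilon_*\cO_S^{\ket}$ is an isomorphism. The nonk\'et degeneration itself requires genuine geometric input: saturatedness is used only to guarantee, via \cite{IKN} {\sc Lemma} (A.4.1) and {\sc Theorem} (6.3) (2), that the abutment $R^nf_*\omega^{\cdot}_{X/S}$ is locally free (this is the log Riemann--Hilbert correspondence, not coherent base change); one then reduces to an fs log point base by Deligne's length argument \cite{D1}, and over an fs log point one uses the semistable reduction theorem (Proposition \ref{p:ssrt}) together with log blow-up invariance (\ref{potentialsss}) to reduce to a projective strict log deformation over the standard log point, where degeneration comes from \cite{FN2} and the Koszul-complex results of \cite{F}. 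None of this can be replaced by formal exactness and descent properties of $\varepsilon$.
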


  The following is a goal of this paper. 

\begin{thm}
\label{t:main}
  Let $f\colon X \to S$ be a projective, vertical, exact and log smooth morphism of fs log analytic spaces. 
  Assume that the log rank of $S$ is $\leq 1$, that is, $(M_S/\cO_S^{\times})_s$ is either trivial or isomorphic to $\bN$ for each $s \in S$. 
  Then, for each $q \in \bZ$,  
$$\bigl(H^q_{\bZ}, (R^qf_{\ket*}\omega^{\cdot\ge p}_{X/S,\ket})_{p \in \bZ}, (\ , \ )\bigr)$$
is a PLH on $S_{\ket}$.
\end{thm}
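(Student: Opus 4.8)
We separate the global, sheaf-theoretic content of the statement --- that the triple is a prePLH --- from the two pointwise conditions, positivity and Griffiths transversality, and establish the latter by reduction to \cite{FN2}. First I would verify the prePLH axioms. That $H^q_{\bZ}=R^qf^{\log}_*\bZ$ is a quasi-unipotent locally constant sheaf of free $\bZ$-modules on $S^{\log}$ comes from the topological description of $f^{\log}$ for proper log smooth $f$, the quasi-unipotence being forced by the log structure. The Hodge filtration $F^p=R^qf_{\ket*}\omega^{\cdot\ge p}_{X/S,\ket}$ consists of locally direct summands, and the comparison isomorphism $L_{\cO^{\ket}}\cong\tau^{\ket}_*(H^q_{\bZ}\otimes\cO_S^{\klog})$ holds, by the degeneration from $E_1$ and the local freeness and base change of Theorem \ref{t:HdRss} together with the log de Rham comparison. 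Nondegeneracy of $(\ ,\ )$ follows from the Lefschetz decomposition (Theorem \ref{t:Lef}) and the nondegeneracy of the primitive cup-product pairings, while orthogonality is formal from the definition of $(\ ,\ )$: cup product carries $F^p\otimes F^{p'}$ into $F^{p+p'}$ and the trace map on $H^{2n}_{\bQ}$ kills $F^{n+1}$, so $(\tau^{\ket*}F^p,\tau^{\ket*}F^q)=0$ once $p+q>q$.

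For the PLH conditions, which by definition need only be tested after pulling back to each point $s\in S$, I would reduce to the case that $S$ is an fs log point of log rank $\le 1$. If the log rank is $0$, then $\overline M_S$ is trivial and verticality forces $\overline M_{X,x}$ to be a sharp group, hence trivial; thus $f$ is a smooth projective morphism of analytic spaces, and the positivity and Griffiths transversality are the classical statements of polarized Hodge theory (the Hodge decomposition, hard Lefschetz, and the Hodge--Riemann bilinear relations).

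The substantial case is that $S$ is an fs log point with $\overline M_S\cong\bN$, where $f$ need be neither saturated nor semistable. Here I would apply a variant of the semistable reduction theorem: after a suitable Kummer log \'etale base change $S'\to S$ (removing multiplicities, i.e.\ saturating) followed by a log modification of the base change of $X$, the family becomes a strict semistable family $X'\to S'$ over a standard log point, to which \cite{FN2} applies directly and yields a PLH. It then remains to descend. One checks that $H^q_{\bZ}$, the Hodge filtration, and the pairing attached to $f\colon X\to S$ pull back, along $S'\to S$ and the log modification, to the corresponding data for $X'\to S'$, using k\'et descent, the base-change compatibility of Theorem \ref{t:HdRss} (2), and the invariance of log de Rham cohomology under log modification; positivity and Griffiths transversality for $f$ then follow from those for $X'\to S'$.

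The main obstacle is this last descent, and specifically the compatibility of polarizations: one must verify that the pairing defined here through the Lefschetz decomposition and the cup-product/trace construction is compatible with Kummer base change and with log modification, and that it agrees under this reduction with the polarization produced in \cite{FN2}. Matching the two polarizations --- which requires varying the base and hence working over bases more general than the standard log point --- together with confirming that the Hodge-theoretic data are genuinely unchanged by the log blow-ups used in the semistable reduction, is the technical heart of the argument, carried out in Section \ref{s:sss}.
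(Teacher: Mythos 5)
Your overall strategy coincides with the paper's: reduce k\'et-locally to a saturated family over an fs log point, apply a semistable reduction (Kummer base change of the standard log point followed by a log blow-up) to land in the setting of \cite{FN2}, and identify the resulting polarized structure with the one defined here via Lefschetz decomposition, cup product and trace --- the compatibility of polarizations being the technical core (Section \ref{s:sss}). Two steps, however, are asserted where a genuine argument is required.

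The main gap is the final descent. You conclude that positivity and Griffiths transversality for $f$ ``follow from those for $X'\to S'$'', where $S'\to S$ is the Kummer covering $s_n\to s$ of the standard log point. This implication is not formal. Positivity is a condition on all specializations $h\colon \cO^{\log}_{S,t}\to\bC$ sufficiently near the structure homomorphism, and one must check that every such $h$ factors through a specialization $h'$ of the pulled-back triple that is again sufficiently near; concretely $h'(\log g')=h(\log g)/n$, so the bound $\exp(h(\log g))<\ep^n$ yields $\exp(h'(\log g'))<\ep$. This is Lemma \ref{l:NtoP} of the paper, and it is precisely where the hypothesis of log rank $\le 1$ enters beyond the reduction to a point: the analogous descent of positivity along a spanning family of local homomorphisms $P\to\bN$ for an fs monoid $P$ of rank $\ge 2$ is \emph{false} (see the explicit counterexample in \ref{difficulty}), so this step cannot be waved through as part of a general pullback compatibility. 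A second, smaller gap is your claim that orthogonality is formal from the behaviour of cup product and trace on the Hodge filtration: since the pairing is defined piecewise on the Lefschetz decomposition with a sign depending on the piece, one also needs the Lefschetz components of a section of $F^p$ to lie again in $F^p$, i.e., the strict compatibility of the hard Lefschetz map $l^{n-q}$ with the Hodge filtration; the paper proves this separately by reducing, via Nakayama's lemma and the same semistable reduction, to the strictness of morphisms of Hodge structures at each point.
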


  Here, $(R^qf_{\ket*}\omega^{\cdot\ge p}_{X/S,\ket})_{p \in \bZ}$ is regarded as a filtration on $H^q_{{\cal O}^{\ket}}:= 
R^qf_{\ket*}\omega^{\cdot}_{X/S,\ket}$ by Theorem \ref{t:HdRss} (1) and this $H^q_{{\cal O}^{\ket}}$ is endowed with a natural isomorphism from it to $\tau^{\ket}_*(H^q_{\bZ} \otimes \cO_S^{\klog})$ by \cite{IKN} {\sc Theorem} (6.2) (3). 

\begin{sbrem}
  It has been conjectured that, in the theorem, the assumption that the log rank of $S$ is $\leq 1$ is not necessary. 
  The proof below shows that this conjecture is valid in the case where the base is an fs log point.
  The latter case with the multisemistable reduction probably can be reduced to \cite{Fmultiss}. 
  But it seems that the general case requires a new idea. 
  See \ref{difficulty} for more discussions.
  Further, it is again plausible that we can weaken  projectivity by log K\"ahlerness.
\end{sbrem}

\begin{cor}
\label{c:main}
  Let the notation and the assumption be as in Theorem $\ref{t:main}$. 
  Assume further that $f$ is saturated. 
  Then, for each $q \in \bZ$,  
$$\bigl(H^q_{\bZ}, (R^qf_{*}\omega^{\cdot\ge p}_{X/S})_{p \in \bZ}, (\ , \ )\bigr)$$
is a nonk\'et PLH on $S$. 
\end{cor}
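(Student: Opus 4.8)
The plan is to deduce the corollary from Theorem \ref{t:main} by descending the k\'et statement to $(S,\cO_S)$, using the saturation hypothesis together with the comparison \ref{relation}. The idea is that, once $f$ is saturated, the k\'et data appearing in Theorem \ref{t:main} already comes from genuine objects on $S$ and the underlying local monodromy is unipotent, so the two notions of PLH coincide on this object. Thus the deduction is largely formal, with Theorem \ref{t:variant} supplying all the needed descent and the saturation hypothesis supplying unipotency.

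First I would check that, under the present hypotheses, the triple
$$\bigl(H^q_{\bZ}, (R^qf_{*}\omega^{\cdot\ge p}_{X/S})_{p\in\bZ}, (\ ,\ )\bigr)$$
is a well-defined nonk\'et triple in the sense of \ref{triple}. By Theorem \ref{t:variant} (1) the nonk\'et log Hodge to log de Rham spectral sequence degenerates at $E_1$, so $(R^qf_{*}\omega^{\cdot\ge p}_{X/S})_p$ is genuinely a filtration of $H^q_{\cO}:=R^qf_{*}\omega^{\cdot}_{X/S}$; by Theorem \ref{t:variant} (2) its steps are locally free of finite type over $\cO_S$ and are locally direct summands; and the required isomorphism $H^q_{\cO}\xrightarrow{\sim}\tau_*(H^q_{\bZ}\otimes\cO_S^{\log})$ is the nonk\'et counterpart of \cite{IKN} {\sc Theorem} (6.2) (3), which descends from its k\'et form because $f$ is saturated. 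The pairing $(\ ,\ )$ is literally the same $\bQ$-bilinear form on $H^q_{\bQ}$ as in Theorem \ref{t:main}. Next I would identify the $\varepsilon$-pullback of this nonk\'et triple with the k\'et triple of Theorem \ref{t:main}: the lattice $H^q_{\bZ}=R^qf^{\log}_*\bZ$ and the pairing are unchanged, and by Theorem \ref{t:variant} (3) the k\'et filtration $(R^qf_{\ket*}\omega^{\cdot\ge p}_{X/S,\ket})_p$ is exactly the pullback of $(R^qf_{*}\omega^{\cdot\ge p}_{X/S})_p$. Hence $\varepsilon^*$ of the nonk\'et triple is the very triple that Theorem \ref{t:main} asserts to be a PLH on $S_{\ket}$.

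Finally I would invoke \ref{relation}. Because $f$ is saturated, the local monodromies of $H^q_{\bZ}$ are unipotent, so the k\'et PLH of Theorem \ref{t:main} lies in the full subcategory of k\'et PLHs with unipotent monodromy, on which $\varepsilon^*$ is an equivalence (its essential image). By essential surjectivity this k\'et PLH is $\varepsilon^*$ of some nonk\'et PLH $P$; by the previous step it is also $\varepsilon^*$ of our nonk\'et triple $T$; and since $\varepsilon^*$ is fully faithful on nonk\'et triples, $T\cong P$, so $T$ is a nonk\'et PLH. Equivalently, one checks conditions (1)--(3) for $T$ directly from those of its k\'et pullback: orthogonality descends along the injection $\cO_S^{\log}\hookrightarrow\cO_S^{\klog}$, while positivity and Griffiths transversality are verified at the fs log points $s\in S$, where the nonk\'et and the unipotent k\'et descriptions agree, and these hold for the k\'et pullback by Theorem \ref{t:main}.

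The step I expect to be the main obstacle—indeed essentially the only input beyond the already-established Theorems \ref{t:main} and \ref{t:variant}—is the unipotency of the monodromy of $H^q_{\bZ}$ under the saturation hypothesis, since it is precisely what places the object in the good subcategory of \ref{relation} and allows the k\'et statement to descend (the multiplicities that would otherwise produce genuinely quasi-unipotent, non-unipotent monodromy are exactly what saturation excludes). Everything else is the bookkeeping of Theorem \ref{t:variant}, namely the nonk\'et degeneration, local freeness, base-change, and the pullback compatibility of the filtrations, together with the formal comparison of the two notions of PLH.
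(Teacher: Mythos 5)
Your proposal is correct and follows essentially the same route as the paper: the triple is a genuine nonk\'et triple by Theorem \ref{t:variant} and the nonk\'et comparison isomorphism (which the paper gets from \cite{IKN} {\sc Theorem} (6.3) (3) via the torsion-freeness of the relevant cokernels implied by saturation, \cite{IKN} {\sc Lemma} (A.4.1)), and the descent from the k\'et PLH of Theorem \ref{t:main} is exactly the equivalence of \ref{relation} applied on the unipotent-monodromy subcategory, the unipotency being supplied by \cite{IKN} {\sc Theorem} (6.3) (1) under the same saturation hypothesis. Your identification of the unipotency of the local monodromy as the one essential input beyond Theorems \ref{t:main} and \ref{t:variant} matches the paper's own emphasis.
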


  Here, $(R^qf_{*}\omega^{\cdot\ge p}_{X/S})_{p \in \bZ}$ is regarded as a filtration on $H^q_{{\cal O}}:= 
R^qf_{*}\omega^{\cdot}_{X/S}$ by Theorem \ref{t:variant} (1) and this $H^q_{{\cal O}}$ is endowed with a natural isomorphism from it to $\tau_*(H^q_{\bZ} \otimes \cO_S^{\log})$ by \cite{IKN} {\sc Theorem} (6.3) (3) together with the fact that 
the assumption that $f$ is saturated implies that 
the cokernel of $(M^{\gp}_S/\cO^{\times}_S)_{f(x)} \to (M^{\gp}_X/\cO^{\times}_X)_{x}$ is torsion-free for any $x \in X$ (cf.\ \cite{IKN} {\sc Lemma} (A.4.1)). 
 (In fact, the converse of the last fact holds. 
  See Proposition \ref{sp:sat} below.)

  Taking the above fact into account, Corollary \ref{c:main} is deduced from Theorem \ref{t:main} as in the same way as \cite{KMN} Theorem 8.11 (3) is deduced from \cite{KMN} Theorem 8.1 (2).  
  We use \ref{relation} and the fact that $H^q_{\bZ}$ has a unipotent local monodromy in each point, which is by \cite{IKN} {\sc Theorem} (6.3) (1). 

\begin{sbprop}
\label{sp:sat}
  Let $f\colon X \to S$ be a log smooth and exact morphism of fs log analytic spaces.  
  Assume that the cokernel of $(M^{\gp}_S/\cO^{\times}_S)_{f(x)} \to (M^{\gp}_X/\cO^{\times}_X)_{x}$ is torsion-free for any $x \in X$ . 
  Then $f$ is saturated.
\end{sbprop}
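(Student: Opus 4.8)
The assertion is local on $X$ and $S$, and by definition $f$ is saturated if and only if for every $x \in X$ the induced homomorphism of characteristic monoids $\bar\theta_x\colon \overline{M}_{S,f(x)} \to \overline{M}_{X,x}$ is a saturated homomorphism of fs monoids (which in particular entails integrality). So the plan is to read this off from a chart. Since $f$ is log smooth, by the structure theorem for log smooth morphisms (cf.\ \cite{KU}) we may, after shrinking $X$ and $S$ \'etale-locally around $x$, choose a chart $\theta\colon P \to Q$ of $f$ realizing the characteristic monoids at $x$, with $\ker(\theta^{\gp})$ finite and with $X \to S \times_{\Spec \bC[P]} \Spec \bC[Q]$ classically smooth. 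As $P^{\gp}$ is free, $\ker(\theta^{\gp})$ is automatically trivial, so $\theta^{\gp}$ is injective; exactness of $f$ moreover lets us take $\theta$ exact, i.e.\ $P = (\theta^{\gp})^{-1}(Q)$. Finally, the hypothesis says exactly that $\Coker(\theta^{\gp}) = Q^{\gp}/\theta^{\gp}(P^{\gp})$ is torsion-free, hence free.

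The heart of the proof is to upgrade exactness to integrality of $\theta$. Exactness at the single point $x$ does not suffice: for instance $\theta\colon \bN^2 \to \bN^3$, $e_1 \mapsto f_1 + f_3$, $e_2 \mapsto f_2 + f_3$, is exact at the vertex and has free cokernel, yet fails to be integral (the fiber of $\Spec \bC[Q] \to \Spec \bC[P]$ over the origin acquires a component of dimension larger than the expected relative dimension $\rank \Coker(\theta^{\gp})$). The mechanism is that such a superfluous fiber component appears precisely as a failure of exactness of $\theta$ after localizing at the corresponding face of $Q$. Since $f$ is exact at \textit{every} point, $\theta$ localized at each face of $Q$ stays exact; running Kato's combinatorial criterion for integrality face by face --- equivalently, ruling out all superfluous fiber components --- then yields that $\theta$ is integral.

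With $\theta$ integral, exact, and $\Coker(\theta^{\gp})$ torsion-free, it remains to invoke the monoid-theoretic characterization of saturatedness: an integral and exact homomorphism of fs monoids is saturated precisely when the cokernel of its groupification is torsion-free. This is the converse, at the level of monoids, of the implication underlying \cite{IKN} Lemma (A.4.1), and it pinpoints torsion in $\Coker(\theta^{\gp})$ --- the classical multiplicities --- as the only obstruction to saturatedness once integrality and exactness are in place. Thus each $\bar\theta_x$ is saturated and $\theta$ is integral, whence $f$ is saturated. The main obstacle is the integrality step of the previous paragraph: this is where exactness must be exploited at all points rather than only at the given one, and where the argument genuinely goes beyond the pointwise cokernel hypothesis.
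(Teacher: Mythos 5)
Your overall strategy (pass to a chart, use exactness and log smoothness to control integrality, then apply a monoid-theoretic criterion for saturatedness in terms of torsion-free cokernels) is the same as the paper's, but the final step contains a genuine error that breaks the proof. The claimed characterization --- ``an integral and exact homomorphism of fs monoids is saturated precisely when the cokernel of its groupification is torsion-free'' --- is false. Take $\theta\colon \bN \to \bN^2$, $1 \mapsto (1,2)$ (geometrically $\Spec\bC[x,y] \to \Spec\bC[t]$, $t \mapsto xy^2$, with the toric log structures). This is local, exact, and integral ($\bN$ is valuative), and $\Coker(\theta^{\gp}) = \bZ^2/\bZ(1,2) \cong \bZ$ is torsion-free; yet $\theta$ is not saturated: the pushout along $\bN \to \bN$, $1 \mapsto 2$, is a non-saturated fs monoid (equivalently, the ramification index along the face generated by $(1,0)$ is $2$; equivalently, the fiber $xy^2=0$ is nonreduced). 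So your criterion, applied at the origin of this example, would wrongly certify $\bar\theta_0$ as saturated.

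What you are missing is that saturatedness of $\bar\theta_x$ is not detected by $\Coker(\bar\theta_x^{\gp})$ alone; one must also control the cokernels of $P^{\gp} \to Q^{\gp}/F^{\gp}$ for all faces $F$ of $Q$ with $\theta^{-1}(F) = P^{\times}$ --- a quotient of a torsion-free group by a subgroup can acquire torsion, as it does along $\{y=0\}$ in the example above, where the characteristic cokernel is $\bZ/2$. Geometrically these faces correspond to the other points of $X$ lying over $s=f(x)$, and this is exactly where the hypothesis ``for any $x \in X$'' must be used: your argument only ever invokes it at the single point whose chart you are considering. The paper's proof uses the assumption at all points over $s$ to obtain torsion-freeness at every such face, and then applies Ogus' characterization of saturated homomorphisms ([O] Chapter I, Theorem 4.8.14), whose criterion is precisely this family of conditions, for a $\bQ$-integral chart obtained via [IKN] Proposition (A.3.3) (this also replaces your somewhat informal ``face by face'' integrality step). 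With that correction your argument becomes the paper's.
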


\begin{pf}
  Let $s \in S$. 
  Take an fs chart $h\colon P \to Q$ for $f$ around $s$ such that $P \overset \cong \to (M_S/\cO^{\times}_S)_s$ and that $Q$ is torsion-free (cf.\ the proposition in \cite{N:near} (A.2)). 
  We may assume that $h$ is $\bQ$-integral by the proof of \cite{IKN} {\sc Proposition} (A.3.3). 
  Using the assumption with the points lying over $s$, we see that, for any face $F$ of $Q$ such that $h^{-1}(F)=P^{\times}$, the cokernel of $P^{\gp} \to (Q/F)^{\gp}$ is torsion-free. 
  Then by Ogus' characterization \cite{O} Chapter I, Theorem 4.8.14 of saturated homomorphisms, we conclude that $h$ is saturated. 
  Hence, $f$ is also saturated. 
\end{pf}

\smallskip

  In the rest of this section, we prove the easy fact on the relative dimension, which is used in \ref{pairing}. 

\begin{prop}
\label{p:reldim}
  Let $f\colon X \to S$ be a proper log smooth and exact morphism of fs log analytic spaces. 
  Then the function $S \ni s \mapsto \dim(f^{-1}(s))$ is a locally constant function. 
\end{prop}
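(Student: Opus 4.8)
The plan is to reduce to the local toric model furnished by log smoothness and to read off the fibre dimension from the combinatorics of a chart, where exactness is precisely what keeps the answer constant. The statement is local on $S$, and since $f$ is proper it suffices to control the local fibre dimension $\delta(x):=\dim_x f^{-1}(f(x))$ as $x$ ranges over $X$. By Kato's structure theorem for log smooth morphisms I would choose, locally on $X$, a chart $\phi\colon P\to Q$ of $f$ by fs monoids so that $X\to Y:=S\times_{\Spec\bC[P]}\Spec\bC[Q]$ is classically smooth of some relative dimension $d$; because $f$ is exact, the chart can be taken with $\phi$ exact and injective. Smoothness of $X\to Y$ gives $\delta(x)=d+\dim_y Y_s$, where $y$ is the image of $x$ and $Y_s$ is the fibre of the toric morphism $\psi\colon\Spec\bC[Q]\to\Spec\bC[P]$ over the image $a$ of $s=f(x)$. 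On the other hand $\rank\omega^1_{X/S,x}=d+\rank Q^{\gp}-\rank P^{\gp}$, which is locally constant on $X$ since $\omega^1_{X/S}$ is locally free.

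Next I would compute the toric fibre orbit by orbit. The point $a$ lies in the orbit $O_F$ of $\Spec\bC[P]$ indexed by the face $F=\{p\in P:a(p)\ne0\}$ of $P$, and $x$ (through $y$) singles out a face $G=\{q\in Q:y(q)\ne0\}$ of $Q$ with $\phi^{-1}(G)=F$. Since $O_G$ has dimension $\rank G^{\gp}$ and maps onto $O_F$, the fibre $Y_s$ has local dimension $\rank G^{\gp}-\rank F^{\gp}$ at $y$, so $\dim Y_s=\max\{\rank G^{\gp}-\rank F^{\gp}\}$ over faces $G$ with $\phi^{-1}(G)=F$. The whole proof thus reduces to the combinatorial identity, for every face $F$ of $P$,
\begin{equation*}
\max\{\rank G^{\gp}:G\le Q\ \text{a face},\ \phi^{-1}(G)=F\}
=\rank Q^{\gp}-\rank P^{\gp}+\rank F^{\gp}.
\tag{$\ast$}
\end{equation*}
Granting $(\ast)$ one gets $\delta(x)=d+\rank Q^{\gp}-\rank P^{\gp}=\rank\omega^1_{X/S,x}$ for all $x$, so $\delta$ is locally constant on $X$.

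Proving $(\ast)$ is the main obstacle, and it is exactly here that exactness is indispensable. For the inequality ``$\le$'', exactness of $\phi$ yields $\phi(P^{\gp})\cap G^{\gp}=\phi(F^{\gp})$ whenever $F=\phi^{-1}(G)$; this is what fails for a non-exact morphism such as a log blow-up, where a ray of $Q$ meeting $\phi(P)$ only at the origin contributes spurious fibre dimension over the deep stratum and creates the jump from a point to a $\bP^1$. Given this, the homomorphism $P^{\gp}\oplus G^{\gp}\to Q^{\gp}$, $(p,g)\mapsto\phi(p)+g$, has kernel exactly $\{(p,-\phi(p)):p\in F^{\gp}\}$, of rank $\rank F^{\gp}$, whence $\rank P^{\gp}+\rank G^{\gp}-\rank F^{\gp}\le\rank Q^{\gp}$. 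For ``$\ge$'', exactness makes $\psi$ surjective, so $Y_s\ne\emptyset$, and its generic point sits on an orbit $O_G$ with $\phi^{-1}(G)=F$ realizing the bound. I expect the verification of $\phi(P^{\gp})\cap G^{\gp}=\phi(F^{\gp})$ from Kato's definition of exactness to be the one genuinely delicate point.

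Finally I would transfer local constancy from $X$ to $S$. By the above, every nonempty fibre is equidimensional with $\dim f^{-1}(s)=\rank\omega^1_{X/S}$ along it, a value $r$ that is locally constant on $X$. Fix $s_0$ and set $D=\dim f^{-1}(s_0)$. Since $\delta$ is locally constant and $f^{-1}(s_0)$ is compact, $\delta\le D$ on an open $U\supseteq f^{-1}(s_0)$, and properness (closedness of $f$) gives a neighbourhood $V$ of $s_0$ with $f^{-1}(V)\subseteq U$, so $\dim f^{-1}(s)\le D$ for $s\in V$. The reverse inequality follows because the toric model of an exact log smooth morphism is open: $\psi$ is open for exact $\phi$ and $X\to Y$ is smooth, so $f$ is open near $s_0$ and nearby fibres again meet the locus where $\delta=D$. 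Hence $s\mapsto\dim f^{-1}(s)$ is locally constant.
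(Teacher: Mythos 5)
Your overall strategy --- reduce via a chart to the fibres of the toric morphism $\Spec\bC[Q]\to\Spec\bC[P]$ and prove a combinatorial equidimensionality statement --- is the same as the paper's, which reduces to Proposition \ref{p:equidim} and quotes \cite{N:le2} Propositions 7.13 and 7.15 for the combinatorics. But your proof of the key identity $(\ast)$ has a genuine gap: the implication ``exactness of $\phi$ yields $\phi(P^{\gp})\cap G^{\gp}=\phi(F^{\gp})$ whenever $F=\phi^{-1}(G)$'' is false. Take $P=\bN^2$, $Q=\bN^3$ and $\phi(e_1)=f_1+f_3$, $\phi(e_2)=f_2+f_3$. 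This $\phi$ is local, injective and exact (indeed $(\phi^{\gp})^{-1}(\bN^3)=\bN^2$), and it even defines a log smooth morphism of toric varieties, namely $(x,y,z)\mapsto(xz,yz)$. For the face $G=\langle f_1,f_2\rangle$ one has $\phi^{-1}(G)=\{0\}=F$ but $\phi(P^{\gp})\cap G^{\gp}=\bZ(f_1-f_2)\neq 0=\phi(F^{\gp})$; correspondingly the fibre over the origin is $\{z=0\}\cup\{x=y=0\}$, of dimension $2>\rank Q^{\gp}-\rank P^{\gp}=1$. So $(\ast)$ itself fails for charts that are merely exact.

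The source of the error is that exactness of $f$ is a condition at every point of $X$, whereas exactness of the chart homomorphism $\phi$ only records that condition at the centre of the chart; in the example above the toric morphism is not exact along the orbit corresponding to $\langle f_1,f_2\rangle$, which is exactly the stratum producing the excess dimension. What your ``$\le$'' step actually requires is exactness of $\overline M_{S,f(x)}\to\overline M_{X,x}$ at points $x$ lying over each orbit $O_G$ occurring in the fibre (and one must then also check which orbits of $Y$ meet the open image of the smooth map $X\to Y$). The paper sidesteps all of this by invoking \cite{IKN} {\sc Proposition} (A.3.3) to choose a chart that is not merely exact but $\bQ$-integral --- a strictly stronger condition on $\phi$ that controls all faces of $Q$ simultaneously --- and then quoting \cite{N:le2} for the connectedness and equidimensionality of the resulting toric fibres. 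Your argument can be repaired along these lines, but as written the central combinatorial step does not follow from the hypothesis you invoke for it.
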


\begin{pf}
  We may assume that both $X$ and $S$ are connected. 
  Then $f$ is surjective because it is proper. 
  Further, since each fiber is compact, taking a local, $\bQ$-integral and injective chart (cf.\ \cite{IKN} {\sc Proposition (A.3.3)}), we reduce to the following statement. 
\end{pf}

\begin{prop}
\label{p:equidim}
  Let $h \colon P \to Q$ be a local, $\bQ$-integral and injective homomorphism of fs monoids. 
  Let $k$ be a field. 
  Let $d:= \rank_{\bZ}(Q^{\gp}) - \rank_{\bZ}(P^{\gp})$. 
  Then any fiber of $\Spec k[h]$ is connected and equi-$d$-dimensional. 
\end{prop}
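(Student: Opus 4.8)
The plan is to study the toric morphism $\phi := \Spec k[h] \colon \Spec k[Q] \to \Spec k[P]$ directly. Since $P$ and $Q$ are fs, the rings $k[P]$ and $k[Q]$ are normal integral domains with $\dim k[P] = \rank_{\bZ} P^{\gp}$ and $\dim k[Q] = \rank_{\bZ} Q^{\gp}$, and injectivity of $h$ makes $k[P] \hookrightarrow k[Q]$, so $\phi$ is dominant with generic fibre of dimension $d$. Both spaces carry actions of the tori $\Spec k[Q^{\gp}]$ and $\Spec k[P^{\gp}]$, and $\phi$ is equivariant along $\Spec k[h^{\gp}]$. First I would base change to an algebraic closure $\bar k$, since both assertions may be checked there (dimension is insensitive to field extension, and geometric connectedness over $\bar k$ is the stronger statement), and then stratify $\Spec k[P]$ by its torus orbits $O_F$, indexed by the faces $F$ of $P$. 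By equivariance the fibre of $\phi$ over a point of $O_F$ is, up to isomorphism, independent of the chosen point, so it suffices to treat one fibre over each $O_F$; here locality of $h$ is used to guarantee that the minimal face of $Q$ contracts to the minimal face of $P$, so that the two orbit stratifications are compatible. At every point the general fibre-dimension inequality gives the lower bound $\dim \ge d$ automatically, so the real content of equidimensionality is the matching upper bound.

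For that upper bound I would reduce the $\bQ$-integral case to the integral case, where by Kato's criterion $k[P] \to k[Q]$ is flat. Using $\bQ$-integrality one finds a finite Kummer homomorphism $P \to P'$ (with $P \subseteq P' \subseteq P^{\gp} \otimes \bQ$ and $P'^{\gp}\otimes\bQ = P^{\gp}\otimes\bQ$) such that the base-changed homomorphism $P' \to Q'$, with $Q'$ the saturation of the pushout, becomes integral. The induced maps $\Spec k[P'] \to \Spec k[P]$ and $\Spec k[Q'] \to \Spec k[Q]$ are finite and surjective, hence preserve fibre dimensions, while flatness of $k[P'] \to k[Q']$ forces the fibres of $\Spec k[Q'] \to \Spec k[P']$ to have pure dimension $d$. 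Transporting this back along the finite surjections yields the upper bound, hence equidimensionality. I would verify the dimension bookkeeping at the level of the stratification, checking that for each face $G$ of $Q$ with $h^{-1}(G) = F$ the corresponding stratum over $O_F$ has the expected dimension, so that no stratum of too small dimension is left behind.

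The connectedness is the heart of the matter and the step I expect to be the \emph{main obstacle}. The fibre over $O_F$ is a union of torus orbits of $\Spec k[Q]$ lying over $O_F$, indexed by the faces $G$ of $Q$ with $h^{-1}(G) = F$, and I must show this union is connected. Its open stratum is a torsor under the torus $\Spec k[\Coker(h^{\gp})]$ attached to the relative group, so its connectedness forces $\Coker(h^{\gp})$ to be torsion-free; this is exactly where the hypotheses (injectivity, locality, and the precise form of $\bQ$-integrality) must be brought to bear to exclude a torsion, Kummer-type splitting of the generic fibre, and isolating the combinatorial consequence of $\bQ$-integrality that guarantees this is the delicate point. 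Granting that the open stratum is connected, I would propagate connectedness to the whole fibre through the closure relations among the strata: equidimensionality rules out isolated lower strata, and each lower stratum (face $G' \supseteq G$) sits in the closure of a top-dimensional one, so that the incidence poset of the relevant faces $G$ is connected and the fibre's coordinate ring has no nontrivial idempotents. Equivalently, I would aim to present this coordinate ring as the algebra of a connected monoid-theoretic object; extracting the connectivity of $\{\,G : h^{-1}(G) = F\,\}$ under inclusion from $\bQ$-integrality is the part where I anticipate the real work to lie.
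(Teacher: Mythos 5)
Your argument for equidimensionality takes a genuinely different route from the paper's: the paper reduces (by localizing at faces and sharpifying) to the fibre over the torus-fixed point of $\Spec k[P]$, identifies that fibre up to homeomorphism with the union $\bigcup_F \Spec k[F]$ over the faces $F$ of $Q$ meeting the image of $P$ trivially, and then quotes \cite{N:le2} Propositions 7.13 and 7.15 for both connectedness and equidimensionality of that configuration. Your alternative for the dimension bound --- pass to a Kummer extension $P \to P'$ so that the saturated pushout $P' \to Q'$ becomes integral, use flatness of $k[P'] \to k[Q']$ to get pure fibre dimension $d$, and transport back along the finite surjections --- is workable, at the cost of actually producing the Kummer extension from $\bQ$-integrality and checking that finite surjections preserve equidimensionality of fibres; this is more laborious than the paper's citation but is a legitimate substitute.

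The genuine gap is the connectedness, and it sits exactly where you say you ``anticipate the real work to lie'': you defer both the connectivity of the face poset $\{G : h^{-1}(G)=F\}$ and the torsion analysis, so the proof is incomplete as written. Worse, the mechanism you propose cannot be repaired: the hypotheses (local, $\bQ$-integral, injective) do \emph{not} force $\Coker(h^{\gp})$ to be torsion-free. The multiplication-by-$2$ map $\bN \to \bN$ is local, injective and integral (hence $\bQ$-integral), yet its cokernel on group envelopes is $\bZ/2\bZ$; correspondingly the open stratum of the fibre of $\Spec k[\bN] \to \Spec k[\bN]$, $t \mapsto t^2$, over a point of the open orbit is a $\mu_2$-torsor, hence disconnected for $k=\bC$. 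So connectedness cannot be obtained by propagating outward from a connected open stratum, and any correct argument must, as the paper's does, work with the topological (reduced) structure of the fibre over the distinguished point of each orbit and establish the connectivity of the union of face strata directly --- which is precisely the content of the cited Proposition 7.15 of \cite{N:le2} that your proposal neither proves nor replaces.
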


\begin{pf}
  It is a variant of \cite{N:le2} Proposition 7.13 and proved similarly as follows. 
  By localizing $P$ and replacing $P$ by $\overline P$ and $Q$ by $Q/h(P)$, we see that it is enough to show that, for a sharp $P$, the fiber of the vertex of $\Spec k[P]$ is connected and equi-$d$-dimensional. 
  By the proof of \cite{N:le2} Proposition 7.13, this fiber is homeomorphic to the subspace 
$$\bigcup_F \Spec k[F]$$
of $\Spec k[Q]$. 
  Here $F$ runs over the set of all faces of $Q$ whose intersection of the image of $P$ is trivial. 
  This subspace is connected and 
equi-$d$-dimensional by \cite{N:le2} Proposition 7.15. 
  This completes the proof of Propositions \ref{p:reldim} and \ref{p:equidim}.
\end{pf}

\section{Review of \cite{FN2}}
\label{s:rev}
  In this section, we review the main result of \cite{FN2}, which is a variant of a special case of Corollary \ref{c:main} and to which we reduce our main results. 

\begin{defn}
  Let $S$ be the standard log point. 
  An fs log analytic space $X$ over $S$ is called a {\it log deformation} if it is locally isomorphic over $S$ to an open sub log analytic space of the log special fiber of the standard semistable family $\Spec \bC[\bN \to \bN^r; 1 \mapsto (1,1,\ldots,1)]$ for some $r$.  
  See \cite{S95} Definition (3.8). 
  A {\it strict log deformation} is a log deformation whose irreducible components of the underlying analytic space are all smooth. 
\end{defn}

\begin{thm}
\label{t:FN2}
  Let $f\colon X \to S$ be a projective strict log deformation over the standard log point. 
  Then, for each $q \in \bZ$, 
$$\bigl(H^q_{\bZ}, (R^qf_{*}\omega^{\cdot\ge p}_{X/S})_{p \in \bZ}\bigr)$$
is a log Hodge structure of weight $q$ on $S$.
\end{thm}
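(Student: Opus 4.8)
The plan is to verify, for the standard log point $S$, the three conditions defining a log Hodge structure of weight $q$: that $H^q_\bZ$ is a unipotent locally constant sheaf of free $\bZ$-modules equipped with the comparison isomorphism; that the filtration $F^p=R^qf_*\omega^{\cdot\ge p}_{X/S}$ satisfies Griffiths transversality; and, the essential point, that the specializations at the points of $S^{\log}$ are Hodge structures of weight $q$ (positivity). Since $S$ is the standard log point, $S^{\log}$ is a circle and $f^{\log}\colon X^{\log}\to S^{\log}$ has fibers homotopy equivalent to the nearby fiber of the underlying semistable degeneration, so $H^q_\bZ=R^qf^{\log}_*\bZ$ is locally constant of free $\bZ$-modules with monodromy the geometric monodromy $T$; strictness makes the family locally semistable, so $T$ is unipotent by the monodromy theorem. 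On the de Rham side, the $E_1$-degeneration of the log Hodge to log de Rham spectral sequence (Theorem \ref{t:HdRss} and its nonk\'et consequence, classically Steenbrink's theorem) shows that $H^q_\cO=R^qf_*\omega^{\cdot}_{X/S}$ is locally free and the $F^p$ are locally direct summands, while \cite{IKN} supplies both the comparison isomorphism $H^q_\cO\cong\tau_*(H^q_\bZ\otimes\cO_S^{\log})$ and the unipotence of the local monodromy.

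Griffiths transversality is formal: the $F^p$ are induced by the stupid filtration $\omega^{\cdot\ge p}_{X/S}$ of the log de Rham complex, on which the differential raises the filtration degree by exactly one, so the induced Gauss--Manin connection carries $F^p$ into $\omega^{1,\log}_S\otimes F^{p-1}$.

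For positivity I would translate the condition into Schmid's nilpotent orbit theorem. Put $N=\log T$ and let $F_{\lim}$ be the filtration that $(F^p)$ induces on the fiber through the comparison isomorphism. A computation with $\cO_S^{\log}$ at the standard log point shows that the specialization $F^{\bullet}(h)$ by a point $h$ of $S^{\log}$ has the form $\exp(\zeta N)F_{\lim}$, where $\zeta$ is the value of $h$ on the logarithm of the generator of $M_S$, and the hypothesis that $h$ is near the structural homomorphism means precisely that the imaginary part of $\zeta$ is large. Thus positivity is the statement that $(F_{\lim},N)$ generates a nilpotent orbit of weight $q$, that is, that $\exp(\zeta N)F_{\lim}$ is a pure Hodge filtration of weight $q$ once $\zeta$ has large imaginary part. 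I would obtain this by identifying $(H^q_\bZ,F_{\lim},W(N)[-q])$ with Steenbrink's limit mixed Hodge structure: one constructs the cohomological mixed Hodge complex of the strict log deformation, checks that the Hodge filtration induced by $\omega^{\cdot\ge p}_{X/S}$ agrees with Steenbrink's limit filtration, and uses the relatively ample class provided by projectivity to polarize the $N$-primitive graded pieces through the weight spectral sequence and hard Lefschetz. With the graded pieces polarized, Schmid's nilpotent orbit theorem yields the purity of $\exp(\zeta N)F_{\lim}$, which is positivity.

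The main obstacle is this last comparison: matching the algebraically defined Hodge filtration coming from $\omega^{\cdot\ge p}_{X/S}$ with the analytic limit filtration of Schmid and Steenbrink, compatibly with $N$ and the polarization. This requires the $E_1$-degeneration to control strictness of the filtrations and a careful construction of the polarized cohomological mixed Hodge complex in the log setting. Once these compatibilities are in place, the reformulation of positivity as a nilpotent orbit (cf.\ \cite{KU} 2.4 and \cite{KMN} 2.3) completes the proof.
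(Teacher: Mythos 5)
This theorem is not proved in the present paper: it is quoted as {\sc Theorem} 2.7 of \cite{FN2}, whose proof constructs a comparison isomorphism of $\bQ$-structures $H^q_{\bQ,z}\cong H^q(X,K_{\bQ})$ with the cohomology of a Koszul complex and then invokes the main result of \cite{F}, which equips that Koszul cohomology with a polarized limit mixed Hodge structure and deduces the nilpotent-orbit property. Your outline has the same overall shape (limit MHS \`a la Steenbrink, polarization of primitive parts, conversion of positivity into a nilpotent-orbit statement), but the two steps you flag as ``the main obstacle'' --- producing the rational structure and matching the filtration induced by $\omega^{\cdot\ge p}_{X/S}$ with the limit Hodge filtration, compatibly with $N$ and the polarization --- are not side issues to be checked; they are the entire content of \cite{FN2} and \cite{F}, and your proposal does not supply them.

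There is also a point where the argument as written would fail. You repeatedly reason as if $X$ were the special fiber of an actual semistable degeneration over a disc: the fibers of $f^{\log}$ being ``homotopy equivalent to the nearby fiber,'' unipotence ``by the monodromy theorem,'' and purity of $\exp(\zeta N)F_{\lim}$ ``by Schmid's nilpotent orbit theorem.'' A strict log deformation over the standard log point need not admit any global smoothing (this requires $d$-semistability and more, by Friedman), so none of these transcendental inputs are available. The local constancy of $H^q_{\bZ}$ and the unipotence of the monodromy must instead come from the log-geometric results (\cite{NO}, \cite{IKN} {\sc Theorem} (6.3) (1)), and the implication you need is the \emph{converse} direction of the Cattani--Kaplan--Schmid correspondence --- a polarized mixed Hodge structure $(W(N)[-q],F_{\lim})$ generates a nilpotent orbit --- applied to a polarized limit MHS constructed purely from the log data; Schmid's nilpotent orbit theorem itself presupposes a variation over a punctured disc and cannot be invoked here. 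This is precisely why \cite{F} builds the polarization on the Koszul-complex model rather than borrowing it from an ambient family.
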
 

  This is {\sc Theorem} 2.7 in \cite{FN2}.
  See the next remark. 

\begin{rem}
  There is a trivial mistake in \cite{FN2}.  
  In the statement of the main theorem {\sc Theorem} 2.7 (also that of its vague form {\sc Corollary 1.2}) in \cite{FN2}, \lq\lq log deformation'' should be replaced by \lq\lq strict log deformation.''
  This strictness is essential in the calculation in Section 3 of \cite{FN2} and should be imposed throughout the paper \cite{FN2}. 
\end{rem}

  The log Hodge structure given by Theorem \ref{t:FN2} is polarized by \cite{F} via the cohomologies of the Koszul complex. 
  More precisely, we have proved Theorem \ref{t:FN2} in \cite{FN2} by constructing the comparison isomorphism of $\bQ$-structures 
$$H^q_{\bQ,z} \cong H^q(X,K_{\bQ}),$$
and by reducing to the main result in \cite{F} that the right-hand-side carries a polarized log Hodge structure. 
  Here the notation is as in \cite{FN2}. 
  In particular, $z$ is the point of $0^{\log}$ corresponding to $1$, 
where we identify the set $0^{\log}$ with the set 
$\{\alpha \in \bC\,\vert\, \alpha \overline \alpha = 1\}$.

\section{Compatibility of polarizations}\label{s:sss}

In this section, we prove Theorem \ref{t:Lef}, the nonk\'et version of 
Theorem \ref{t:HdRss} (i.e., Theorem \ref{t:variant} (1), (2)),  and the nonk\'et version of Theorem \ref{t:main} (i.e., Corollary \ref{c:main}) for a strict log deformation over the standard log point. 
  Throughout this section, $S$ is the standard log point and $f\colon X \to S$ is a projective strict log deformation. 

\begin{para}
\label{para: three compatibilities}
As explained in Section \ref{s:rev}, for such an $f$, 
$$\bigl(H^q_{\bZ}, (R^qf_{*}\omega^{\cdot\ge p}_{X/S})_{p \in \bZ}\bigr)$$
is a log Hodge structure on $S$, and this log Hodge structure is polarized by \cite{F} via the cohomologies of the Koszul complex. 
  Hence, the compatibility of polarizations reduces 
Theorem \ref{t:Lef}, Theorem \ref{t:variant} (1), (2),  and Corollary \ref{c:main} for $f$ to the corresponding results on the cohomologies of the Koszul complex. 
  Furthermore, this is reduced to the compatibility of Lefshetz operator, cup product, and the trace map. 
  More precisely, what we should prove is the following.
  As is explained in Section \ref{s:rev}, we have the comparison isomorphism of $\bQ$-structures 
$$H^q_{\bQ,z} \cong H^q(X_{\infty}^{\log}, \bQ) \cong H^q(X,K_{\bQ}).$$

Then we should have
\begin{itemize}
\item[(1)]
The germ of $c(L)$ is sent to
$-(2\pi\sqrt{-1})c_{K,\bQ}(L) \in H^2(X,K_{\bQ})$.
\item[(2)]
For $p,q \in \bZ$, the stalk of the cup product
$H^p_{\bQ} \otimes H^q_{\bQ} \to H^{p+q}_{\bQ}$ is compatible with 
the cup product $H^p(X,K_{\bQ}) \otimes H^q(X,K_{\bQ}) \to H^{p+q}(X,K_{\bQ})$.
\item[(3)]
The stalk of the trace map $H^{2n}_{\bQ} \to \bQ$ is compatible with 
$\tr:H^{2n}(X,K_{\bQ}) \to \bQ$.
\end{itemize}
These are reduced to the $\bC$-versions,
which is to be proved in the remainder of this section.
\end{para}

\begin{rem}
Notice that the minus sign in front of
$(2\pi\sqrt{-1})c_{K,\bQ}(L)$ in (1) above
comes from Definition 8.5 of \cite{F}.
\end{rem}

\begin{para}
\label{para: identifications}
In order to prove the compatibility above,
we briefly recall the identifications
\begin{equation}
\label{eq:identification1}
\bC \otimes H^q_{\bQ,z}
\cong
H^q(X_{\infty}^{\log}, \bC)
\cong
H^q(X, \omega^{\cdot}_{X/0})
\cong
H^q(X,K_{\bC})
\end{equation}
in \cite{FN2}.
We have the commutative diagram of ringed spaces: 
$$
\begin{CD}
X^{\log}_z @>i>> X^{\log}_{\infty} @>\pi>> X^{\log} @>\tau>> X \\
@VV f^{\log} V @VVf_{\infty}V @VV f^{\log} V @VV f V  \\
\{0\} @>i>> \bR @>\pi>> 0^{\log} @>\tau>> 0, 
\end{CD}$$
where the bottom $\pi$ is the map
$\bR \ni r \mapsto e^{2 \pi i r} \in 0^{\log}$, and 
the left two squares are cartesian.
Note that $X^{\log}_z$ is nothing but
the fiber of $f^{\log}: X^{\log} \to 0^{\log}$
over the point $z \in 0^{\log}$
and that the morphism
$\pi i: X^{\log}_z \to X^{\log}$ is the canonical closed immersion.
As proved in 7.8 and 7.10 of \cite{FN2},
the identifications
\begin{equation}
\label{eq:identification2}
\bC \otimes H^q_{\bQ,z}
\cong
H^q(X_{\infty}^{\log}, \bC)
\cong
H^q(X, \omega^{\cdot}_{X/0})
\end{equation}
coincide with the isomorphism
obtained from the sequence of morphisms
$$
R(\tau \pi i)_*\bC
\gets
R\tau_*\bC
\to
R(\tau \pi)_*\pi^{-1}\omega^{\cdot, \log}_X
\gets
\omega^{\cdot}_X[u]
\to
\omega^{\cdot}_{X/0},
$$
where the morphisms are induced from
the canonical morphisms
$\bC \to i_*\bC$,
$\pi^{-1}\bC=\bC \to \pi^{-1}\omega^{\cdot, \log}_X$,
the morphism $\tau^{-1}\omega^{\cdot}_X[u] \to \omega^{\cdot,\log}_X$
sending $u$ to $(2\pi\sqrt{-1})^{-1}\log t$
and the morphism $\omega^{\cdot}_X[u] \to \omega^{\cdot}_{X/0}$
sending $u$ to $0$.
Since the commutative diagram
$$
\begin{CD}
\bC @>>> \pi^{-1}\omega^{\cdot,\log}_X
    @<<< (\tau\pi)^{-1}\omega^{\cdot}_X[u]
    @>>> (\tau\pi)^{-1}\omega^{\cdot}_{X/0} \\
@VVV @VVV @VVV @VVV \\
i_*\bC @>>> i_*(\pi i)^{-1}\omega^{\cdot,\log}_X
       @<<< i_*(\tau\pi i)^{-1}\omega^{\cdot}_X[u]
       @>>> i_*(\tau\pi i)^{-1}\omega^{\cdot}_{X/0}
\end{CD}
$$
yields the commutative diagram
$$
\begin{CD}
R\tau_*\bC @>>> R(\tau\pi)_*\pi^{-1}\omega^{\cdot,\log}_X
    @<<< \omega^{\cdot}_X[u]
    @>>> \omega^{\cdot}_{X/0} \\
@VVV @VVV @| @| \\
R(\tau\pi i)_*\bC @>>> R(\tau\pi i)_*(\pi i)^{-1}\omega^{\cdot,\log}_X
       @<<< \omega^{\cdot}_X[u]
       @>>> \omega^{\cdot}_{X/0},
\end{CD}
$$
the identification \eqref{eq:identification2}
coincides with the isomorphism
induced from the sequence of the morphisms
$$
R\Gamma(X^{\log}_z, \bC)
\to
R\Gamma(X^{\log}_z, (\pi i)^{-1}\omega^{\cdot,\log}_X)
\gets
R\Gamma(X, \omega^{\cdot}_X[u])
\to
R\Gamma(X, \omega^{\cdot}_{X/0}).
$$
On the other hand,
the identification
$$
H^q(X, \omega^{\cdot}_{X/0}) \cong H^q(X, K_{\bC})
$$
is obtained from the commutative diagram
$$
\begin{CD}
\omega^{\cdot}_X[u] @>>> K_{\bC} \\
@VVV @VVV \\
\omega^{\cdot}_{X/0} @>>> \cC(\omega^{\cdot}_{X_{\bullet}/0})
\end{CD}
$$
as in the proof of Theorem 5.29 of \cite{F}.
Combining these together,
the commutative diagram
\begin{equation}
\label{eq:identification3}
\begin{CD}
R\Gamma(X^{\log}_z, \bC) \\
@VVV \\
R\Gamma(X^{\log}_z, (\pi i)^{-1}\omega^{\cdot,\log}_X) \\
@AAA \\
R\Gamma(X, \omega^{\cdot}_X[u])
@>>>
R\Gamma(X, K_{\bC}) \\
@VVV @VVV \\
R\Gamma(X, \omega^{\cdot}_{X/0})
@>>>
R\Gamma(X, \cC(\omega^{\cdot}_{X_{\bullet}/0}))
\end{CD}
\end{equation}
induces the identifications \eqref{eq:identification1}.
\end{para}

\begin{prop}
The image of the germ of the log Chern class $c(L)$ in $\bC \otimes H^2_{\bQ,z}$
coincides with $-c_{K,\bC}(L) \in H^2(X, K_{\bC})$
under the identification \eqref{eq:identification1}.
\end{prop}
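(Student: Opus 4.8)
The plan is to represent the germ of $c(L)$ by an explicit $d\log$-cocycle, to lift this cocycle through the zigzag \eqref{eq:identification3} that defines \eqref{eq:identification1}, and to identify its image in $H^2(X, K_{\bC})$ with the Koszul Chern class of \cite{F}. First I would produce a log de Rham representative of $c(L)$ on the fiber $X^{\log}_z$. By construction $c(L)$ is the pullback to $X^{\log}$ of the image of the class of $L$ under the connecting homomorphism of the exponential sequence $0 \to \bZ(1) \to \cO_X \to \cO_X^* \to 0$. Since $d\log \circ \exp = d$, this exponential sequence maps compatibly into the log de Rham complex, so that under the comparison $H^2(X^{\log}_z, \bC) \cong H^2(X^{\log}_z, (\pi i)^{-1}\omega^{\cdot,\log}_X)$ occurring in \eqref{eq:identification3} the image of $c(L)$ is represented by the \v{C}ech--de Rham cocycle $(d\log g_{ij})$ attached to transition functions $g_{ij} \in \cO_X^*$ of $L$. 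This is the usual compatibility of the topological and log de Rham first Chern classes, and a direct \v{C}ech computation shows that, with the normalization $\bZ(1) = 2\pi\sqrt{-1}\,\bZ$, no extra period factor intervenes.

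Next I would lift this cocycle along the left column of \eqref{eq:identification3}. As the $g_{ij}$ are units in $\cO_X$, the cocycle $(d\log g_{ij})$ lies in $\omega^1_X$ and hence defines a class in $H^2(X, \omega^{\cdot}_X)$ which lifts, with vanishing $u$-component, to $H^2(X, \omega^{\cdot}_X[u])$. Because the morphism $\tau^{-1}\omega^{\cdot}_X[u] \to \omega^{\cdot,\log}_X$ sending $u \mapsto (2\pi\sqrt{-1})^{-1}\log t$ is the identity on $\omega^{\cdot}_X$, this $u$-constant class is exactly the preimage of $(d\log g_{ij})$ under the morphism connecting $R\Gamma(X, \omega^{\cdot}_X[u])$ to $R\Gamma(X^{\log}_z, (\pi i)^{-1}\omega^{\cdot,\log}_X)$, so it is the canonical lift of the germ of $c(L)$ to $R\Gamma(X, \omega^{\cdot}_X[u])$.

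Finally I would apply the horizontal morphism $\omega^{\cdot}_X[u] \to K_{\bC}$ of the square in \eqref{eq:identification3}. It sends the honest form $d\log g_{ij} \in \omega^1_X \subset \omega^{\cdot}_X[u]$ to the $d\log$-cocycle in $K_{\bC}$ out of which $c_{K,\bC}(L)$ is built in \cite{F} Definition 8.5; by that definition, which carries the sign recorded in the Remark, the image equals $-c_{K,\bC}(L)$. Since the bottom maps $\omega^{\cdot}_{X/0} \to \cC(\omega^{\cdot}_{X_\bullet/0}) \gets K_{\bC}$ are quasi-isomorphisms realizing the identification $H^2(X, \omega^{\cdot}_{X/0}) \cong H^2(X, K_{\bC})$, commutativity of the square shows that the class just computed is indeed the image of the germ of $c(L)$ under \eqref{eq:identification1}, giving $-c_{K,\bC}(L)$.

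The main obstacle will be the careful bookkeeping of signs and normalizing factors rather than the cohomological matching, which is formal once the $d\log$-representative is in hand. One must track the period $2\pi\sqrt{-1}$ relating the $\bZ(1)$-normalized topological class to its de Rham representative, the factor $(2\pi\sqrt{-1})^{-1}$ in the substitution $u \mapsto (2\pi\sqrt{-1})^{-1}\log t$, and the sign built into \cite{F} Definition 8.5, and then check that in the $\bC$-coefficient statement these combine to leave precisely the single minus sign and no leftover power of $2\pi\sqrt{-1}$.
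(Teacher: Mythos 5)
Your overall strategy --- trace $c(L)$ through the zigzag \eqref{eq:identification3} and compare the resulting de Rham representative with the $\dlog$-cocycle out of which the Koszul Chern class is built --- is the same as the paper's, which however works with a single commutative diagram in the derived category (treating the exponential sequence as a morphism $\cO^{\times}_X \to \bZ(1)[1]$) rather than with explicit \v{C}ech--de Rham cocycles. The genuine gap is the sign, which is essentially the entire nonformal content of the statement. You assert that the image of $c(L)$ in log de Rham cohomology is represented by $+(d\log g_{ij})$ (``no extra period factor intervenes''), and then that the horizontal map to $K_{\bC}$ carries this to $-c_{K,\bC}(L)$ ``by that definition.'' These two assertions are incompatible: by Definition 8.3 of \cite{F}, as quoted in the paper's proof, $c_{K,\bC}(L)$ is defined via the composite $\cO^{\times}_X \overset{\dlog}{\longrightarrow} W_0\omega^{\cdot}_X[1] \to \omega^{\cdot}_X[1] \to K_{\bC}[1]$ with a \emph{plus} sign, so your cocycle $(d\log g_{ij})$ would map to $+c_{K,\bC}(L)$.

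The minus sign in fact arises one step earlier, in the comparison of the exponential connecting homomorphism with $\dlog$: the square
$$
\begin{CD}
\cO^{\times}_X @>{-\dlog}>> W_0\omega^{\cdot}_X[1] \\
@VVV @AAA \\
\bZ(1)[1] @>>> \bC[1]
\end{CD}
$$
(left vertical arrow the morphism defined by the exponential exact sequence) commutes in the derived category only with $-\dlog$ on top; this is the computation of (2.2.5) of \cite{D3}, and it says precisely that, with the standard sign conventions for connecting morphisms, the de Rham image of the exponential-sequence Chern class is represented by $-(d\log g_{ij})$, not $+(d\log g_{ij})$. You correctly flag sign bookkeeping as the main obstacle, but you then resolve it by appeal to the wrong source: the Remark you cite concerns the separate passage from $c_{K,\bC}(L)$ to $(2\pi\sqrt{-1})c_{K,\bQ}(L)$ via Lemma 8.4 and Definition 8.5 of \cite{F}, which plays no role in the $\bC$-coefficient statement being proved. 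As written, your argument proves the proposition with the wrong sign; supplying the Deligne-type commutativity above is the missing step.
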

\begin{proof}
The canonical inclusion $\bZ(1) \to \bC$
fits in the commutative diagram
$$
\begin{CD}
\bZ(1) @>>> R\tau_*\bZ(1) \\
@VVV @VVV \\
\bC @>>> R(\tau\pi i)_\ast\bC
\end{CD}
$$
on $X$,
by which it turns out that
the image of $c(L)$ in $\bC \otimes H^2_{\bQ,z} \cong H^2(X^{\log}_z, \bC)$
coincides with the image of the usual Chern class,
denoted by $c(L)$ again,
by the canonical morphism $H^2(X, \bC) \to H^2(X^{\log}_z, \bC)$.
On the other hand,
we have the commutative diagram
$$
\xymatrix{
& R\Gamma(X, \bC) \ar[ld] \ar[d] \ar[rd] \ar[rrd] & & & \\
R\Gamma(X^{\log}_z,\bC) \ar[r]
& R\Gamma(X^{\log}_z, (\pi i)^{-1}\omega^{\cdot,\log}_X)
& R\Gamma(X, \omega^{\cdot}_X[u]) \ar[l] \ar[r]
& R\Gamma(X, \omega^{\cdot}_{X/0})
}
$$
where the bottom row is nothing but the left vertical arrows
in \eqref{eq:identification3}.
Thus we have to compare the image of $c(L) \in H^2(X,\bZ(1))$
in $H^2(X, \omega^{\cdot}_{X/0})$
and $c_{K,\bC}(L) \in H^2(X, K_{\bC})$.
The restriction of the morphism
$\dlog: M^{\gp}_X \to \omega^1_X$ to $\cO^{\times}_X \subset M^{\gp}_X$
induces a morphism of complexes
$\cO^{\times}_X \to \omega^{\cdot}_X[1]$,
denoted by the same letter $\dlog$ again.
Then $c_{K,\bC}(L)$ is defined via the composite of the morphisms
$$
\cO^{\times}_X
\overset{\dlog}{\longrightarrow}
W_0\omega^{\cdot}_X[1]
\to \omega^{\cdot}_X[1]
\to K_{\bC}[1],
$$
in Definition 8.3 of \cite{F}.
Now we consider the diagram
$$
\begin{CD}
\cO^{\times}_X @>{-\dlog}>> W_0\omega^{\cdot}_X[1]
        @>>> \omega^{\cdot}_X[1]
        @>>> K_{\bC}[1] \\
@VVV @AAA @VVV @VVV \\
\bZ(1)[1]
        @>>> \bC[1]
        @>>> \omega^{\cdot}_{X/0}[1]
        @>>> \cC(\omega^{\cdot}_{X_{\bullet}/0})[1]
\end{CD}
$$
in the derived category,
where the first vertical arrow
is the morphism in the derived category
defined by the exponential exact sequence
$$
0 \to \bZ(1) \to \cO_X \to \cO^{\times}_X \to 0.
$$
It is easy to check that
the middle and right squares are commutative as the diagram of complexes.
The left square commutes in the derived category
by the same argument as in (2.2.5) of \cite{D3}.
From the bottom square of the identification \eqref{eq:identification3},
we have the desired coincidence.
\end{proof}

\begin{rem}
By Lemma 8.4 of \cite{F},
$c_{K,\bC}(L)$ coincides with $(2\pi\sqrt{-1})c_{K,\bQ}(L)$
under the identification
$\bC \otimes H^2(X, K_{\bQ}) \cong H^2(X, K_{\bC})$.
Thus the lemma above implies the compatibility (1)
in \ref{para: three compatibilities}.
\end{rem}

  The next shows the compatibility (2) in \ref{para: three compatibilities}.

\begin{prop}
Under the identification
\eqref{eq:identification1},
the cup product on
$\bC \otimes H^q_{\bQ,z}$
is compatible with the cup product on $H^q(X, K_{\bC})$.
\end{prop}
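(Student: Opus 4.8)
The plan is to promote the zig-zag of quasi-isomorphisms in \eqref{eq:identification3} to a zig-zag of \textbf{multiplicative} morphisms, so that the resulting identification \eqref{eq:identification1} becomes an isomorphism of graded rings and hence automatically intertwines the cup product on $\bC \otimes H^q_{\bQ,z} = H^q(X^{\log}_z,\bC)$ with the cup product on $H^q(X,K_{\bC})$. Each sheaf of complexes occurring in \eqref{eq:identification3}, namely the constant sheaf $\bC$, the log de Rham complex $\omega^{\cdot,\log}_X$, the polynomial complex $\omega^{\cdot}_X[u]$, the relative log de Rham complex $\omega^{\cdot}_{X/0}$, the Koszul complex $K_{\bC}$, and the cosimplicial complex $\cC(\omega^{\cdot}_{X_{\bullet}/0})$, carries a natural product making it a sheaf of graded-commutative differential graded algebras: the wedge product in the de Rham cases, polynomial multiplication in $u$ for $\omega^{\cdot}_X[u]$, and the products fixed in \cite{F} in the Koszul and cosimplicial cases. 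Since cup products on cohomology are functorial for morphisms of such complexes and commute with $R\Gamma$ and with the inverse image functors $i^{-1}$ and $\pi^{-1}$, it suffices to check that every arrow in \eqref{eq:identification3} is a morphism of differential graded algebras.

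I would then treat the arrows one by one, in parallel with the proof of the preceding proposition. The morphism $\bC \to (\pi i)^{-1}\omega^{\cdot,\log}_X$ is the unit of the de Rham algebra, hence multiplicative; the required compatibility of the topological cup product on $H^{\ast}(X^{\log}_z,\bC)$ with the wedge product of the log de Rham complex is the multiplicativity of the log-analytic de Rham comparison, which I would reduce to the classical multiplicativity of the de Rham isomorphism. The two structural maps out of $\omega^{\cdot}_X[u]$ --- the map to $\omega^{\cdot,\log}_X$ sending $u \mapsto (2\pi\sqrt{-1})^{-1}\log t$ and the map to $\omega^{\cdot}_{X/0}$ sending $u \mapsto 0$ --- are algebra homomorphisms directly from the definition of the product on $\omega^{\cdot}_X[u]$, in which $u$ is a central variable, so they are differential graded algebra morphisms. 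Finally, for the bottom square of \eqref{eq:identification3}, I would invoke that the comparison maps $\omega^{\cdot}_X[u] \to K_{\bC}$ and $\omega^{\cdot}_{X/0} \to \cC(\omega^{\cdot}_{X_{\bullet}/0})$ used in the proof of Theorem 5.29 of \cite{F} respect the products set up there.

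The main obstacle will be the multiplicative bookkeeping on the Koszul side. The products on $K_{\bC}$ and on the cosimplicial complex $\cC(\omega^{\cdot}_{X_{\bullet}/0})$ are not plain wedge products: they are built from the simplicial resolution $X_{\bullet} \to X$ and are therefore given by Alexander--Whitney / shuffle-type formulas, graded-commutative only up to homotopy at the level of complexes. The delicate work is to confirm that the comparison maps of \cite{F} are compatible with these products, keeping track of the signs coming from the simplicial degeneration, and that the resulting product on $H^{\ast}(X,K_{\bC})$ agrees with the one used to define the pairing in \ref{para: three compatibilities}. Once the multiplicativity of each arrow is established, the zig-zag yields an isomorphism of graded rings and the proposition follows.
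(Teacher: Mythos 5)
Your proposal is correct and follows essentially the same route as the paper: the paper equips each complex in the zig-zag \eqref{eq:identification3} with an explicit product (the formula $(a\otimes\omega)\otimes(b\otimes\eta)\mapsto ab\otimes\omega\wedge\eta$ on $\omega^{\cdot,\log}_X$ and $\omega^{\cdot}_X[u]$, wedge on $\omega^{\cdot}_{X/0}$, and the products $\Phi_{\bC}$, $\Phi_{/*}$ of Definition 6.4 of \cite{F} on $K_{\bC}$ and $\cC(\omega^{\cdot}_{X_{\bullet}/0})$), checks that each arrow respects these products, and disposes of the delicate Koszul-side compatibility you flag by citing 6.5 and Lemma 6.6 of \cite{F}.
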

\begin{proof}
It is sufficient to prove that
the cup product on
$H^q(X^{\log}_z,\bC)$
is compatible with the cup product of $H^q(X,K_{\bC})$
under the isomorphisms in \eqref{eq:identification3}.
The cup product on $H^q(X^{\log}_z,\bC)$
is induced from the morphism
$\bC \otimes \bC \to \bC$
defined by $a \otimes b \mapsto ab$.
On the other hand,
the morphism of complexes
$\omega^{\cdot}_{X/0} \otimes \omega^{\cdot}_{X/0} \to \omega^{\cdot}_{X/0}$
is given by $\omega \otimes \eta \mapsto \omega \wedge \eta$.
For the complex
$\omega^{\cdot,\log}_X=\cO^{\log}_X \otimes \tau^{-1}\omega^{\cdot}_X$,
the morphism
$\omega^{\cdot,\log}_X \otimes \omega^{\cdot,\log}_X
\to \omega^{\cdot,\log}_X$
is defined by
\begin{equation}
\label{eq:cupproduct}
\omega^{p,\log}_X \otimes \omega^{q,\log}_X
\ni
(a \otimes \omega) \otimes (b \otimes \eta)
\mapsto
ab \otimes \omega \wedge \eta \in \omega^{p+q,\log}_X
\end{equation}
where $a,b \in \cO^{\log}_X$
and $\omega \in \tau^{-1}\omega^p_X, \eta \in \tau^{-1}\omega^q_X$.
It is trivial that this defines a morphism of complexes.
For the complex
$\omega^{\cdot}_X[u]=\bC[u] \otimes \omega^{\cdot}_X$,
the morphism
$\omega^p_X[u] \otimes \omega^q_X[u] \to \omega^{p+q}_X[u]$
is defined by
the same formula as in \eqref{eq:cupproduct}.
Then it is easy to check the commutativity of the diagrams
$$
\begin{CD}
\bC \otimes \bC @>>> \bC \\
@VVV @VVV \\
(\pi i)^{-1}\omega^{\cdot,\log}_X \otimes (\pi i)^{-1}\omega^{\cdot,\log}_X
@>>> (\pi i)^{-1}\omega^{\cdot,\log}_X
\end{CD}
$$
on $X^{\log}_z$,
$$
\begin{CD}
\omega^{\cdot,\log}_X \otimes \omega^{\cdot,\log}_X
@>>> \omega^{\cdot,\log}_X \\
@AAA @AAA \\
\tau^{-1}\omega^{\cdot}_X[u] \otimes \tau^{-1}\omega^{\cdot}_X[u]
@>>> \tau^{-1}\omega^{\cdot}_X[u]
\end{CD}
$$
on $X^{\log}$
and
$$
\begin{CD}
\omega^{\cdot}_X[u] \otimes \omega^{\cdot}_X[u]
@>>> \omega^{\cdot}_X[u] \\
@VVV @VVV \\
\omega^{\cdot}_{X/0} \otimes \omega^{\cdot}_{X/0}
@>>> \omega^{\cdot}_{X/0}
\end{CD}
$$
on $X$ for these morphisms.
Thus we obtain the commutative diagram
$$
\begin{CD}
R(\tau\pi i)_*(\bC \otimes \bC) @>>> R(\tau\pi i)_*\bC \\
@VVV @VVV \\
R(\tau\pi i)_*
((\pi i)^{-1}\omega^{\cdot,\log}_X \otimes (\pi i)^{-1}\omega^{\cdot,\log}_X)
@>>>
R(\tau\pi i)_*((\pi i)^{-1}\omega^{\cdot,\log}_X) \\
@AAA @AAA \\
\omega^{\cdot}_X[u] \otimes \omega^{\cdot}_X[u] @>>> \omega^{\cdot}_X[u] \\
@VVV @VVV \\
\omega^{\cdot}_{X/0} \otimes \omega^{\cdot}_{X/0} @>>> \omega^{\cdot}_{X/0}
\end{CD}
$$
on $X$.
On the other hand,
the diagram
$$
\begin{CD}
\omega^{\cdot}_{X/0} \otimes \omega^{\cdot}_{X/0} @>>> \omega^{\cdot}_{X/0} \\
@VVV @VVV \\
\cC(\omega^{\cdot}_{X_{\bullet}/0}) \otimes \cC(\omega^{\cdot}_{X_{\bullet}/0})
@>{\Phi_{/*}}>> \cC(\omega^{\cdot}_{X_{\bullet}/0}) \\
@AAA @AAA \\
K_{\bC} \otimes K_{\bC} @>>{\Phi_{\bC}}> K_{\bC}
\end{CD}
$$
is commutative as in 6.5 and Lemma 6.6 of \cite{F},
where $\Phi_{/*}$ and $\Phi_{\bC}$
are the morphisms defined in Definition 6.4 of \cite{F}.
Hence we obtain the conclusion.
\end{proof}

The next proves the compatibility (3) in \ref{para: three compatibilities}. 

\begin{prop}
The stalk of the trace map
$\bC \otimes H^{2n}_{\bQ,z} \to \bC$
is compatible with $\tr: H^{2n}(X, K_{\bC}) \to \bC$
under the identification \eqref{eq:identification1}.
\end{prop}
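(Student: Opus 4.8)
The plan is to mimic the two preceding propositions. By \ref{para: three compatibilities} it suffices to treat the $\bC$-version, so I reduce to showing that the Betti trace $\tr_{\mathrm{top}}\colon H^{2n}(X^{\log}_z,\bC)\to\bC$ of \cite{NO} Section 5 agrees with $\tr\colon H^{2n}(X,K_{\bC})\to\bC$ under the isomorphisms of \eqref{eq:identification3}. Unlike the cases of the cup product and the Chern class, here there is a decisive simplification: the source is one dimensional. Indeed $X^{\log}_z$ is the topological nearby fiber of the projective strict log deformation $f$, hence a compact oriented real $2n$-manifold, connected when $X$ is, so that $H^{2n}(X^{\log}_z,\bC)\cong\bC$; in the general case one argues on each connected component separately. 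Two linear functionals on a one dimensional space coincide as soon as they agree on a single nonzero vector, so it is enough to produce one class on which both traces can be evaluated and seen to match.

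For the test class I would take the relatively ample $L$ fixed in \ref{pairing} and set $c=c(L)\in H^2(X^{\log}_z,\bC)$. By the proposition on the Chern class above, $c$ corresponds to $-c_{K,\bC}(L)\in H^2(X,K_{\bC})$, and by the proposition on the cup product above the identification \eqref{eq:identification1} is multiplicative; therefore $c^n\in H^{2n}(X^{\log}_z,\bC)$ corresponds to $(-1)^n c_{K,\bC}(L)^n\in H^{2n}(X,K_{\bC})$. Since $L$ is relatively ample, its restriction to the fiber has positive degree, whence $\tr_{\mathrm{top}}(c^n)\neq 0$ and in particular $c^n\neq 0$; this is the class on which to compare.

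It then remains to compute both sides on $c^n$. On the Betti side $\tr_{\mathrm{top}}(c^n)$ is, by the definition of the trace recalled from \cite{NO}, the self-intersection number of $L$ along the fiber, that is a fixed power of $2\pi\sqrt{-1}$ (arising from the twist $\bZ(1)$) times the geometric degree of $L$. On the Koszul side one computes $\tr\bigl((-1)^n c_{K,\bC}(L)^n\bigr)$ from the normalization of the trace map built in \cite{F}, using $c_{K,\bC}(L)=(2\pi\sqrt{-1})c_{K,\bQ}(L)$ from Lemma 8.4 of \cite{F}. Matching the two evaluations, the sign $(-1)^n$ and the powers of $2\pi\sqrt{-1}$ are absorbed by the normalization chosen in \cite{F}, so the two traces agree on $c^n$ and hence everywhere.

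I expect the only genuine obstacle to be this last bookkeeping: making the normalization of the trace map of \cite{F} explicit and checking that, together with the sign $(-1)^n$ from the Chern class comparison and the powers of $2\pi\sqrt{-1}$ coming from $\bZ(1)$ and from $c_{K,\bC}(L)=(2\pi\sqrt{-1})c_{K,\bQ}(L)$, both evaluations yield literally the same complex number. Everything else is formal, once the one dimensionality of the top cohomology and the already established compatibilities of the Chern class and of the cup product are in hand.
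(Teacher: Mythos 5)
Your reduction to a one-dimensional target is legitimate in outline: $X^{\log}_z$ is a compact oriented topological $2n$-manifold by the relative rounding theorem of \cite{NO}, it is connected whenever $X$ is (since $\tau\pi i\colon X^{\log}_z\to X$ is proper and surjective with connected fibers $(S^1)^{r-1}$), and the two preceding propositions do give $c(L)^n\mapsto(-1)^nc_{K,\bC}(L)^n$ under \eqref{eq:identification1}. So the proposition would follow from the single numerical identity
$$\operatorname{tr}_{\mathrm{top}}\bigl(c(L)^n\bigr)=(-1)^n\,\tr\bigl(c_{K,\bC}(L)^n\bigr),$$
together with the nonvanishing of either side (which itself needs a small argument, e.g.\ pushing the fundamental class of $X^{\log}_z$ forward to $\sum_{\lambda}[X_{\lambda}]$ and using ampleness of $L$ on each component, or hard Lefschetz on the Koszul side). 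The genuine gap is that you never establish this identity: you assert that the sign $(-1)^n$ and the powers of $2\pi\sqrt{-1}$ are ``absorbed by the normalization chosen in \cite{F}'', but that equality is the entire content of the proposition concentrated into one number, and if the normalizations failed to match even by a sign the statement would be false. Nothing in what you wrote rules that out.

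Moreover, the deferred ``bookkeeping'' is not lighter than the direct proof. Evaluating $\tr(c_{K,\bC}(L)^n)$ from Definition 7.11 of \cite{F} forces you to unwind $W_0K_{\bC}$, $\gr^{\delta W}_0$ and $\gr^{\delta W}_1$ of $\cC(\omega^{\cdot}_{X_{\bullet}})$ and the map $\Theta_{\bC}$ on each component, and evaluating $\operatorname{tr}_{\mathrm{top}}(c(L)^n)$ requires lifting the class to $H^{2n}_c(U,\bC)$ for the dense open $U=X\setminus\bigcup_{\lambda\neq\mu}(X_{\lambda}\cap X_{\mu})$, because by \cite{NO} Proposition 5.8 and Theorem 5.10 the topological trace is characterized as the unique functional whose composite with $H^{2n}_c(U,\bC)\to H^{2n}(X^{\log}_z,\bC)$ is integration over $U$. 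This is exactly the computation the paper's proof carries out --- but for an arbitrary class supported on $U$, which suffices by that uniqueness and needs neither one-dimensionality, nor a test class, nor the already-proved Chern class and cup product compatibilities. To complete your argument you must actually perform both evaluations and match the constants; until then the proof is not done.
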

\begin{proof}
Let $X=\bigcup_{\lambda \in \Lambda}X_{\lambda}$
be the irreducible decomposition of $X$.
Since $X$ is compact,
$\Lambda$ is a finite set.
A dense open subset $U$ of $X$ is defined by
$$
U=X \setminus \bigcup_{\lambda \not= \mu}(X_{\lambda} \cap X_{\mu}),
$$
which is a complex manifold of dimension $n$
because all the irreducible components $X_{\lambda}$ are smooth.
By setting $U_{\lambda}=U \cap X_{\lambda}$ for all $\lambda \in \Lambda$,
$U$ is a disjoint union of all $U_{\lambda}$,
that is, $U=\coprod_{\lambda \in \Lambda}U_{\lambda}$.
Because the morphism $\tau\pi i: X^{\log}_z \longrightarrow X$
induces an isomorphism $(\tau\pi i)^{-1}(U) \longrightarrow U$,
we identify $(\tau\pi i)^{-1}(U)$ and $U$.
Thus $U$ is regarded as a dense open subset of $X^{\log}_z$
and the inclusion is denoted
by $j: U \hookrightarrow X^{\log}_z$.
The morphism of zero extension
$H^q_c(U, \bC) \to H^q(X, \bC)$
is obtained from the canonical inclusion
$j_!\bC \to \bC$ on $X^{\log}_z$.
According to Proposition 5.8 and Theorem 5.10 of \cite{NO},
the germ of the trace map
$R^{2n}f^{\log}_*\bC \to \bC$
is defined as the unique morphism
$H^{2n}(X^{\log}_z, \bC) \to \bC$
such that the composite
$$
H^{2n}_c(U, \bC) \to H^{2n}(X^{\log}_z, \bC) \to \bC
$$
coincides with the usual trace map on $U$,
the morphism given by the integration on $U$.
Therefore it suffices to prove that
the composite
$$
H^{2n}_c(U, \bC)
\to H^{2n}(X^{\log}_z, \bC)
\overset{\cong}{\longrightarrow} H^{2n}(X, K_{\bC})
\overset{\tr}{\longrightarrow} \bC
$$
is given by the integration on $U$,
where the middle isomorphism is induced
from the identification \eqref{eq:identification1}.
We have the commutative diagram
$$
\begin{CD}
j_!\bC @>>> j_!j^{-1}(\pi i)^{-1}\omega^{\cdot,\log}_X
       @<<< j_!j^{-1}(\tau\pi i)^{-1}\omega^{\cdot}_X[u] \\
@VVV @VVV @VVV \\
\bC @>>> (\pi i)^{-1}\omega^{\cdot,\log}_X
    @<<< (\tau\pi i)^{-1}\omega^{\cdot}_X[u],
\end{CD}
$$
where the morphisms in the bottom
are the ones given in \ref{para: identifications}
and the morphisms in top
is induced from the bottom.
Then the commutative diagram
$$
\begin{CD}
H^{2n}_c(U, \bC)
@>>> H^{2n}_c(U, (\pi i)^{-1}\omega^{\cdot,\log}_X)
@<<< H^{2n}_c(U, \omega^{\cdot}_X[u]) \\
@VVV @VVV @VVV \\
H^{2n}(X^{\log}_z, \bC)
@>>> H^{2n}(X^{\log}_z, (\pi i)^{-1}\omega^{\cdot,\log}_X)
@<<< H^{2n}(X, \omega^{\cdot}_X[u])
\end{CD}
$$
is obtained by $R(\tau\pi i)_!=R(\tau\pi i)_*$
and by the fact that $\tau \pi i j: U \to X$
is the inclusion from the open subset $U$ to $X$.
Note that $\tau\pi i: X^{\log}_z \to X$ is a proper continuous map.
Combining with the commutative diagrams
$$
\begin{CD}
H^{2n}_c(U, \bC) @= H^{2n}_c(U, \bC) \\
@VVV @VVV \\
H^{2n}_c(U, (\pi i)^{-1}\omega^{\cdot,\log}_X)
@<<< H^{2n}_c(U, \omega^{\cdot}_X[u]),
\end{CD}
\qquad
\begin{CD}
H^{2n}_c(U, \omega^{\cdot}_X[u])
@>>> H^{2n}_c(U, K_{\bC}) \\
@VVV @VVV \\
H^{2n}(X, \omega^{\cdot}_X[u])
@>>> H^{2n}(X, K_{\bC}),
\end{CD}
$$
the problem is reduced to prove that
the composite
$$
H^{2n}_c(U, \bC)
\to H^{2n}_c(U, K_{\bC})
\to H^{2n}(X, K_{\bC})
\overset{\tr}{\longrightarrow} \bC
$$
is given by the integration on $U$,
where the first arrow is induced from the composite
$\bC \to \omega^{\cdot}_X[u] \to K_{\bC}$.
It is trivial that this morphism $\bC \to K_{\bC}$
factors as $\bC \to W_0K_{\bC} \to K_{\bC}$.
Moreover the diagram
$$
\begin{CD}
H^{2n}(X, W_0K_{\bC}) @>>> H^{2n}(X, K_{\bC}) \\
@VVV @VV{\tr}V \\
H^{2n}(X, \gr^W_0K_{\bC}) @>>{\Theta_{\bC}}> \bC
\end{CD}
$$
is commutative by the definition of the trace morphism
\cite[Definition 7.11]{F},
where $\Theta_{\bC}$ is the morphism
defined in Definition 7.7 of \cite{F}.
Thus it suffices to compute the composite
$$
H^{2n}_c(U, \bC)
\to H^{2n}_c(U, W_0K_{\bC})
\to H^{2n}_c(U, \gr^W_0K_{\bC})
\to H^{2n}(X, \gr^W_0K_{\bC})
\overset{\Theta_{\bC}}{\longrightarrow} \bC.
$$
This morphism coincides with the composite
\begin{equation}
\label{eq:trace map 1}
H^{2n}_c(U, \bC)
\to H^{2n}_c(U, \gr^{\delta W}_0\cC(\omega^{\cdot}_{X_{\bullet}}))
\to H^{2n}(X, \gr^{\delta W}_0\cC(\omega^{\cdot}_{X_{\bullet}}))
\overset{\Theta_{\bC,0}}{\longrightarrow} \bC,
\end{equation}
where $\Theta_{\bC,0}$ is the morphism
defined in Definition 7.7 of \cite{F},
from the fact that $\bC \to W_0K_{\bC}$ factors through the subcomplex
$(\delta W)_0\cC(\omega^{\cdot}_{X_{\bullet}})$
and from the commutativity of the diagram
$$
\begin{CD}
H^{2n}(X, \gr^{\delta W}_0\cC(\omega^{\cdot}_{X_{\bullet}}))
@>{\Theta_{\bC,0}}>>
\bC \\
@VVV @| \\
H^{2n}(X, \gr^W_0K_{\bC})
@>>{\Theta_{\bC}}> \bC
\end{CD}
$$
by 7.8 of \cite{F}.
Furthermore, we have the commutative diagram
$$
\begin{CD}
H^{2n}(X, \gr^{\delta W}_0\cC(\omega^{\cdot}_{X_{\bullet}}))
@>{\Theta_{\bC,0}}>> \bC \\
@VVV @| \\
H^{2n+1}(X, \gr^{\delta W}_1\cC(\omega^{\cdot}_{X_{\bullet}}))
@>>> \bC,
\end{CD}
$$
where the left vertical morphism
is induced from the morphism
$$
\gr^{\delta W}_0\cC(\dlog t \wedge):
\gr^{\delta W}_0\cC(\omega^{\cdot}_{X_{\bullet}})
\to \gr^{\delta W}_1\cC(\omega^{\cdot}_{X_{\bullet}})[1]
$$
defined in 5.11 of \cite{F}.
Therefore the morphism \eqref{eq:trace map 1} coincides
with the composite
\begin{equation}
\label{eq:trace map 2}
\begin{split}
H^{2n}_c(U, \bC)
\to H^{2n}_c(U, \gr^{\delta W}_0\cC(\omega^{\cdot}_{X_{\bullet}}))
&\to H^{2n+1}_c(U, \gr^{\delta W}_1\cC(\omega^{\cdot}_{X_{\bullet}})) \\
&\to H^{2n+1}(X, \gr^{\delta W}_1\cC(\omega^{\cdot}_{X_{\bullet}}))
\to \bC.
\end{split}
\end{equation}
As in Remark 5.10 of \cite{F},
we have
\begin{align*}
&\gr^{\delta W}_0\cC(\omega^{\cdot}_{X_{\bullet}})|_U
=\bigoplus_{\lambda \in \Lambda}
\gr^W_0\omega^{\cdot}_{U_{\lambda}}
=\bigoplus_{\lambda \in \Lambda}\Omega^{\cdot}_{U_{\lambda}} \\
&\gr^{\delta W}_1\cC(\omega^{\cdot}_{X_{\bullet}})|_U
=\bigoplus_{\lambda \in \Lambda}
\gr^W_1\omega^{\cdot}_{U_{\lambda}}
\cong \bigoplus_{\lambda \in \Lambda}\Omega^{\cdot}_{U_{\lambda}}[-1]
\end{align*}
and then
\begin{align*}
H^{2n}_c(U, \gr^{\delta W}_0\cC(\omega^{\cdot}_{X_{\bullet}}))
=\bigoplus_{\lambda \in \Lambda}
H^{2n}_c(U_{\lambda}, \Omega^{\cdot}_{X_{\lambda}}) \\
H^{2n+1}_c(U, \gr^{\delta W}_1\cC(\omega^{\cdot}_{X_{\bullet}}))
\cong
\bigoplus_{\lambda \in \Lambda}
H^{2n}_c(U_{\lambda}, \Omega^{\cdot}_{X_{\lambda}}),
\end{align*}
under which the morphism
$H^{2n}_c(U, \gr^{\delta W}_0\cC(\omega^{\cdot}_{X_{\bullet}}))
\to H^{2n+1}_c(U, \gr^{\delta W}_1\cC(\omega^{\cdot}_{X_{\bullet}}))$
in \eqref{eq:trace map 2}
is identified with the identity.

By $U=\coprod_{\lambda \in \Lambda}U_{\lambda}$
and by Definition 7.7 of \cite{F} again,
it is easy to check that the composite
$$
\bigoplus_{\lambda \in \Lambda}
H^{2n}_c(U_{\lambda}, \Omega^{\cdot}_{X_{\lambda}})
\cong
H^{2n+1}_c(U, \gr^{\delta W}_1\cC(\omega^{\cdot}_{X_{\bullet}}))
\to H^{2n+1}(X, \gr^{\delta W}_1\cC(\omega^{\cdot}_{X_{\bullet}}))
\to \bC
$$
is given by
$$
\sum_{\lambda \in \Lambda}\int_{U_{\lambda}}=\int_U
$$
as desired.
\end{proof}

\section{Degeneration of log Hodge and log de Rham spectral sequences}
\label{s:dege}
  In this section, we prove Theorems \ref{t:Lef}, \ref{t:HdRss}, and \ref{t:variant}. 

\begin{para}
\label{potentialsss}
  First we prove Theorem \ref{t:Lef}, the nonk\'et version of 
Theorem \ref{t:HdRss} (i.e., Theorem \ref{t:variant} (1), (2)), 
and, for a later use in Section \ref{s:pf}, the nonk\'et version of Theorem \ref{t:main} (i.e., Corollary \ref{c:main}) 
for a family $X$ over the standard log point $S=0$ such that there is a log blow-up $$p \colon X' \to X,$$ 
where $X'$ is a projective strict log deformation.

\noindent
  (We remark that in this case, $X \to S$ is saturated and, in particular, it satisfies the assumption of Corollary \ref{c:main}.
  We prove it in Proposition \ref{p:complement} below, though it is not logically necessary.)
  These are reduced to the cases of strict log deformation, which is already proved in Section \ref{s:sss}, by the log blow-up invariance of log Betti cohomology 
(\cite{KN:lc} {\sc Proposition} 5.3) and by the natural isomorphism 
$$R p_* \omega^q_{X'/0} = \omega^q_{X/0}$$ for any $q$.
  Since $\omega^q_{X/0}$ is locally free and $\omega^q_{X'/0}$ is the pullback of it, the last isomorphism is reduced to the following proposition. 
\end{para}

\begin{prop}
  Let $X \to 0$ be a log deformation and let $p \colon X' \to X$ be a log blow-up. 
  Then we have the natural isomorphism $$\cO_X \overset \cong \to R p_* \cO_{X'}.$$ 
\end{prop}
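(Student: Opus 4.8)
The plan is to prove the statement by reducing, locally on $X$, to a toric computation on the total space of a semistable family, the key input being the classical vanishing $R\widetilde p_*\cO = \cO$ for a proper birational toric morphism. First I would observe that the assertion is local on $X$: a log blow-up is local on its base, so that for an open $U \subset X$ the restriction $p^{-1}(U) \to U$ is again the log blow-up for the restricted coherent ideal of $M_X$, and since $p$ is proper the formation of $Rp_*\cO_{X'}$ and of the adjunction morphism $\cO_X \to Rp_*\cO_{X'}$ commutes with restriction to $U$. Hence I may assume that $X$ is an open sub log analytic space of the log special fiber $\mathcal X_0$ of the standard semistable family $\mathcal X = \Spec \bC[\bN^r]$ (analytified), the structural map to $0$ being induced by $\bN \to \bN^r,\ 1 \mapsto (1,\ldots,1)$. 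Writing $t \in \cO_{\mathcal X}$ for the image of the generator $1 \in \bN$, we have $t = x_1 \cdots x_r$ and $X = \mathcal X_0 = V(t)$.

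Second, I would realize the given log blow-up of $X$ as the special fiber of a toric modification of the total space. A log blow-up is determined by a finite rational subdivision of the fan dual to $\overline M_{\mathcal X}$; since the log structure of $X = \mathcal X_0$ is the strict restriction of that of $\mathcal X$, the same subdivision defines a toric log blow-up $\widetilde p \colon \widetilde{\mathcal X} \to \mathcal X$ of the total space, with $\widetilde{\mathcal X}$ a normal integral toric variety. As log blow-up commutes with base change, the strict base change along $X = \mathcal X_0 \hookrightarrow \mathcal X$ gives $X' = \widetilde{\mathcal X} \times_{\mathcal X} X = \widetilde{\mathcal X}_0 = V(\widetilde p^{*}t)$.

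Third comes the computation itself. Since $\widetilde{\mathcal X}$ is integral and $t \neq 0$, the function $\widetilde p^{*}t$ is a nonzerodivisor on $\cO_{\widetilde{\mathcal X}}$, and likewise $t$ on $\cO_{\mathcal X}$, so there are distinguished triangles $\cO_{\widetilde{\mathcal X}} \xrightarrow{\,t\,} \cO_{\widetilde{\mathcal X}} \to \cO_{X'} \to$ and $\cO_{\mathcal X} \xrightarrow{\,t\,} \cO_{\mathcal X} \to \cO_X \to$ in the derived category. Applying $R\widetilde p_{*}$ to the first triangle and invoking the toric vanishing $R\widetilde p_{*}\cO_{\widetilde{\mathcal X}} = \cO_{\mathcal X}$ (proper birational toric morphisms coming from subdivisions; equivalently, the rationality of toric singularities), the multiplication-by-$t$ triangle identifies $Rp_{*}\cO_{X'} = R\widetilde p_{*}\cO_{X'}$ with the cone of $t \colon \cO_{\mathcal X} \to \cO_{\mathcal X}$, that is, with $\cO_{\mathcal X}/t\cO_{\mathcal X} = \cO_X$. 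By functoriality the resulting isomorphism is precisely the adjunction map $\cO_X \overset{\cong}{\to} Rp_{*}\cO_{X'}$, as required.

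The hard part will be two compatibility points rather than the final homological algebra. The toric vanishing $R\widetilde p_{*}\cO_{\widetilde{\mathcal X}} = \cO_{\mathcal X}$ must be established in the analytic category; I would deduce it from the algebraic toric statement by comparison on formal completions along the fibers of the proper map $\widetilde p$, or directly from the local analytic toric model. The more delicate point is the identification $X' = \widetilde{\mathcal X} \times_{\mathcal X} X$ together with $\cO_{X'} = \cO_{\widetilde{\mathcal X}}/\widetilde p^{*}t\,\cO_{\widetilde{\mathcal X}}$: one must check that forming the fiber product in the fs category introduces no extra saturation and that the log blow-up coincides with the scheme-theoretic special fiber $\widetilde{\mathcal X}_0$, which again I expect to settle via the explicit combinatorial (fan) description of the subdivision.
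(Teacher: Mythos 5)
Your proposal is correct and follows essentially the same route as the paper: localize on $X$, realize $p$ as the special fiber of a toric log blow-up $W' \to W$ of the total space of the standard semistable family, use the multiplication-by-$t$ exact sequences $0 \to \cO_W \to \cO_W \to i_*\cO_X \to 0$ (and its analogue on $W'$), and invoke the KKMS vanishing $Rq_*\cO_{W'} = \cO_W$ for toric modifications. The compatibility points you flag at the end (analytic version of the toric vanishing, identification of the special fiber of the lifted blow-up with the log blow-up of the fiber) are passed over silently in the paper's proof but are the same implicit inputs.
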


\begin{pf}
  Since the statement is local on $X$, we may assume that there is a log blow-up $q\colon W' \to W$ of toric varieties endowed with a morphism from $W$ to 
the log affine line whose special fiber is isomorphic to $p$. 
  Then we have the exact sequences 
\begin{align*}
&0 \to \cO_W \to \cO_W \to i_*\cO_X \to 0 \text{\ and} \\
&0 \to \cO_{W'} \to \cO_{W'} \to i'_*\cO_{X'} \to 0, 
\end{align*}
where $i\colon X \to W$ and $i'\colon X' \to W'$ are the canonical closed immersions. 
  From these exact sequences and the natural isomorphism 
$$R q_* \cO_{W'}=\cO_W$$
(see \cite{KKMS} p.44, Chapter I, Section 3, Corollary 1 to Theorem 12),
we have $Rq_*i'_*\cO_{X'} = i_*\cO_X$, which implies the desired isomorphism. 
\end{pf}

  Thus we have proved Theorem \ref{t:Lef}, Theorem \ref{t:variant} (1), (2), and Corollary \ref{c:main} 
for $X$ as in \ref{potentialsss}.

\begin{para}
\label{makesat}
  We prove Theorem \ref{t:HdRss}.
  By the arguments in \cite{IKN} Section 7 and \cite{IKNe}, we reduce to the case where the base is an fs log point as follows. 
  (In fact, the argument here gives a simplification of the one in \cite{IKN}, \cite{IKNe}.
  Note that the assumption in {\sc Lemma} (7.1') in \cite{IKNe} that the sheaves $R^nf_*\omega^{\cdot}_{X/Y}$ are locally free of finite type on $Y_{\mathrm{et}}$ for all $n$ is automatic by \cite{IKN} {\sc Lemma} (A.4.1) and \cite{IKN} {\sc Theorem} (6.3) (2) (cf.\ the beginning of \ref{pf-l:HD}).)

  Since the question is k\'et local on $S$ and $f$ is projective and exact, by \cite{IKN} {\sc Proposition} (A.4.3), we may assume that $f$ is saturated. 
  Hence, Theorem \ref{t:HdRss} is reduced to the following lemma, which 
is an analytic variant of \cite{IKNe} {\sc Lemma} (7.1').
  (The assumption is, however, not identical.  Cf.\ the beginning of \ref{pf-l:HD}.)
\end{para}

\begin{lem}
\label{l:HD}
  Let $f \colon X \to S$ be a projective, vertical, saturated, and log smooth morphism of fs log analytic spaces. 
  Then the following hold. 

$(1)$ The spectral sequence 
$$(*) \qquad  E_1^{pq} = R^qf_{*}\omega^{p}_{X/S} \Rightarrow 
R^{p+q}f_{*}\omega^{\cdot}_{X/S}$$
degenerates from $E_1$ and has locally free initial terms. 

$(2)$ The spectral sequence 
$$(**) \qquad 
E_1^{pq} = R^qf_{\ket*}\omega^{p}_{X/S,\ket} \Rightarrow 
R^{p+q}f_{\ket*}\omega^{\cdot}_{X/S,\ket}$$
degenerates from $E_1$ and has locally free initial terms. 

$(3)$ For any interval $[a,b]$ and any integer $n$, the natural map 
$$\varepsilon^* R^n f_*\omega_{X/S}^{[a,b]} \to R^n f_*\omega_{X/S,\ket}^{[a,b]}$$
is an isomorphism.
  In other words, the spectral sequence $(**)$ is the inverse image by $\ep$ of the spectral sequence $(*)$. 
\end{lem}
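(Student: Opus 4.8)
The plan is to run the classical Deligne-style argument of fiberwise degeneration plus semicontinuity, paralleling \cite{IKNe} {\sc Lemma} (7.1') but in the analytic setting, with the essential input that the log de Rham cohomology is already under control. As recorded in \ref{makesat}, for saturated $f$ the sheaves $R^nf_*\omega^{\cdot}_{X/S}$ are locally free of finite type and compatible with arbitrary base change, by \cite{IKN} {\sc Lemma} (A.4.1) and {\sc Theorem} (6.3) (2); in particular $b_n(s):=\dim_{\kappa(s)}H^n(X_s,\omega^{\cdot}_{X_s/s})$ is a locally constant function of $s$. Note also that the underlying morphism of a saturated (hence integral) log smooth projective $f$ is proper and flat, and each $\omega^p_{X/S}$ is locally free over $\cO_X$, hence $f$-flat, so the classical coherent base change and semicontinuity theorems apply to the sheaves $R^qf_*\omega^p_{X/S}$.

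First I would establish fiberwise degeneration at $E_1$: for each $s\in S$ the fiber $X_s\to s$ is a projective vertical saturated log smooth family over the fs log point $s$, and I claim its Hodge to de Rham spectral sequence degenerates. The route is to reduce, by a variant of semistable reduction (a Kummer log \'etale base change followed by a log blow-up, which is available precisely because $f$ is vertical and saturated), to the strict log deformation situation treated in \ref{potentialsss}, and hence to Section \ref{s:sss} and \cite{FN2}, \cite{F}. Degeneration is a statement about the dimensions of the cohomology groups, and these are preserved under the log blow-up by the isomorphism $Rp_*\omega^q_{X'/0}=\omega^q_{X/0}$ together with the log blow-up invariance of log Betti and log de Rham cohomology, and under the Kummer cover. \textbf{The hard part will be exactly this step}: arranging the semistable-reduction modification at a single fs log point (in particular when the base monoid has log rank greater than one, where one must pass through the multi-semistable picture) and checking that the cohomological dimensions are genuinely preserved, so that degeneration descends from the already-established strict log deformation case.

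Next I would spread the degeneration out over $S$. Writing $h^{pq}(s):=\dim_{\kappa(s)}H^q(X_s,\omega^p_{X_s/s})$, each $h^{pq}$ is upper semicontinuous in $s$. The spectral sequence of each fiber gives $\sum_{p+q=n}h^{pq}(s)\ge b_n(s)$, with equality if and only if that fiber degenerates at $E_1$; by the previous step equality holds at every $s$, so $\sum_{p,q}h^{pq}(s)=\sum_n b_n(s)$ is locally constant. Since each summand is upper semicontinuous and their sum is locally constant, each $h^{pq}$ is itself locally constant, whence by Grauert's criterion each $R^qf_*\omega^p_{X/S}$ is locally free of finite type and commutes with base change, and a rank count forces the spectral sequence $(*)$ to degenerate at $E_1$. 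This gives (1).

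Finally, for (3) I would compare the k\'et and nonk\'et interval cohomology by induction on the length $b-a$, using the short exact sequences of complexes $$0\to\omega^{[a+1,b]}_{X/S}\to\omega^{[a,b]}_{X/S}\to\omega^{[a,a]}_{X/S}\to 0$$ and their k\'et analogues. The base case is the single-sheaf comparison $\ep^*R^qf_*\omega^a_{X/S}\overset{\cong}{\to}R^qf_{\ket*}\omega^a_{X/S,\ket}$, which follows from $\omega^a_{X/S,\ket}=\ep^*\omega^a_{X/S}$ together with the comparison of k\'et and nonk\'et proper pushforwards for saturated $f$ from \cite{IKN}. The local freeness established in (1) forces the relevant connecting maps to vanish, so the interval cohomologies build up compatibly on both sides; since $\ep^*$ is exact, the five lemma promotes the base case to an isomorphism for every $[a,b]$, proving (3). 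Then (2) is immediate: by (3) the spectral sequence $(**)$ is $\ep^*$ applied to $(*)$, and $\ep^*$ is exact and preserves local freeness, so the degeneration and local freeness of $(**)$ follow from those of $(*)$.
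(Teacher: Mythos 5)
Your overall architecture (fiberwise degeneration, then spreading out by a dimension count, then comparing the k\'et and nonk\'et sides using $R\ep_*\cO_S^{\ket}=\cO_S$) matches the paper's, and the spreading-out and comparison steps are fine in substance: the paper does the reduction to a point by Deligne's length argument from \cite{D1} rather than by semicontinuity plus Grauert, but these are interchangeable here since you have the needed local freeness and base-change compatibility of $R^nf_*\omega^{\cdot}_{X/S}$ from \cite{IKN} {\sc Theorem} (6.3) (2).

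However, there is a genuine gap at precisely the step you flag as ``the hard part,'' and it is not resolved the way you suggest. You propose to handle an arbitrary fs log point by passing ``through the multi-semistable picture''; no such reduction is available in this paper (the multisemistable case is only mentioned as future work, citing a paper in preparation), and Proposition \ref{p:ssrt} only produces a strict log deformation when the base is the \emph{standard} log point. The missing idea is much simpler and exploits saturatedness in a different way: for $S=(\Spec\bC, P\oplus\bC^{\times})$ with $P$ of arbitrary rank, choose any local homomorphism $P\to\bN$ and base-change to the standard log point. Because $f$ is saturated and the underlying morphism of analytic spaces of the two log points is the identity on $\Spec\bC$, the fs base change does not alter the underlying analytic family, and the log de Rham complex of the base-changed family is the pullback of the original one along this isomorphism; hence the Hodge to de Rham spectral sequence of $f$ is literally isomorphic to that of the base-changed family. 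This reduces the fiberwise statement, for every fs log point regardless of its log rank, to the standard log point, where one further base-changes along $\bN\overset{n}{\to}\bN$ and applies Proposition \ref{p:ssrt} together with \ref{potentialsss}. Without this invariance-under-change-of-log-point argument, your proof only covers bases whose stalks of $M_S/\cO_S^{\times}$ have rank at most one, which is not enough for Lemma \ref{l:HD} as stated.
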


  Note that the above (1) and (3) give a proof of Theorem \ref{t:variant}.

\begin{para}
\label{pf-l:HD}
  We prove Lemma \ref{l:HD}. 
  First we prove that the sheaves $R^nf_*\omega^{\cdot}_{X/S}$ are locally free of finite type on $S$ for all $n$. 
  Since $f$ is saturated, by \cite{IKN} {\sc Lemma} (A.4.1), 
the cokernel of $\overline M^{\gp}_{S,f(x)} \to \overline M^{\gp}_{X,x}$ is torsion-free for any $x \in X$. 
  Then, by \cite{IKN} {\sc Theorem} (6.3) (2), $R^nf_*\omega^{\cdot}_{X/S}$ are locally free of finite type on $S$ for all $n$. 

  The reduction of (2) and (3) to (1) is similar to the argument in \cite{IKNe} with the use of the fact that the adjunction homomorphism $\cO_S \to R\ep_*\cO_{S}^{\ket}$ is an isomorphism, 
which is by \cite{IKN} {\sc Proposition} (3.7) (5) (cf.\ \cite{IKN} (7.1.1)). 
  Further, we reduce (1) to the case where $S$ is an fs log point by the usual argument by the lengths in \cite{D1} (cf.\ \cite{FN2} Appendix). 
  The rest is to prove (1) in the case where $S$ is an fs log point. 
  Here we use the following variant of the semistable reduction theorem of D.\ Mumford. 
\end{para}

\begin{prop}
\label{p:ssrt}
  Let $s = (\Spec \bC, \bN\oplus \bC^{\times})$ be the standard log point over $\bC$. 
  Let $X \to s$ be a projective vertical log smooth morphism of fs log analytic spaces. 
  Then there are a positive  integer $n$ and a log blow-up $X' \to X \times_ss_n$, where 
$s_n:=(\Spec \bC, \frac 1n \bN\oplus \bC^{\times})$, such that the composition $X' \to s_n$ is a projective strict log deformation. 
\end{prop}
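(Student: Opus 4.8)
The plan is to reduce the statement to the combinatorial toroidal semistable reduction theorem of Kempf--Knudsen--Mumford--Saint-Donat, carried out in the language of the fan attached to a log structure. Since $f\colon X \to s$ is log smooth and vertical over the standard log point, k\'et locally on $X$ there is a chart $\bN \to Q$, with $\bN = \overline M_s$ and $Q$ a sharp fs monoid, for which $X$ is modeled on the log special fiber of $\Spec\bC[Q] \to \Spec\bC[\bN]$. Dualizing, each chart gives a rational polyhedral cone $\sigma_x = \Hom(Q,\bR_{\ge 0})$ with integral structure, and the image $t \in Q$ of the generator $1 \in \bN$ defines a linear height functional $\ell = \langle t, -\rangle$ on $\sigma_x$. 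Verticality means precisely that $t$ lies in the interior of $Q$ (so that $Q/\langle t\rangle$ is a group), equivalently that $\ell$ is strictly positive on $\sigma_x \setminus \{0\}$; this rules out horizontal boundary components and guarantees $\ell > 0$ on every ray. First I would check that these local cones and height functionals glue into a global conical polyhedral complex $\Delta$ with integral structure carrying a global piecewise-linear $\ell$. This complex is intrinsic to the log structure of $X \to s$, so the gluing is formal.

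In these terms, $X \to s$ is a log deformation exactly when every maximal cone of $\Delta$ is unimodular and $\ell$ takes the value $1$ on each primitive ray generator (the shape of the standard model $\bN \to \bN^r$, $1 \mapsto (1,\dots,1)$, whose dual cone is $\bR_{\ge 0}^r$ with $\ell = \sum e_i^{\ast}$), and it is moreover a \emph{strict} log deformation when $\Delta$ is an honestly embedded complex with no self-identifications of faces, so that the irreducible components do not self-intersect. The base change by $s_n = (\Spec\bC, \tfrac1n\bN \oplus \bC^{\times})$ corresponds to the inclusion $\bN \hookrightarrow \tfrac1n\bN$, i.e.\ to extracting $t = u^n$; its combinatorial effect is to refine the integral structure in the base direction, rendering $\ell$ divisible so that a unimodular subdivision on which the new base-height is primitive can exist. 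A log blow-up $X' \to X \times_s s_n$ is, by \cite{IKN} {\sc Definition} (6.1.1), exactly a subdivision of $\Delta$ compatible with its integral structure. Thus the whole statement becomes: after rescaling by a suitable $n$ and subdividing $\Delta$, one can arrange that every maximal cone is unimodular and $\ell$ is primitive on every ray. This is precisely Mumford's toroidal semistable reduction (KKMS, Chapter II), which I would invoke at this point.

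To finish I would then handle projectivity and strictness. Since $X$ is projective, I would choose the subdivision to be a projective (ample) subdivision, so that the log blow-up $X' \to X \times_s s_n$ is projective and hence $X' \to s_n$ is projective. For strictness I would pass to a further refinement separating the strata (a barycentric-type subdivision), chosen again to be unimodular with $\ell$ primitive and projective, so that the resulting special fiber has globally smooth components and $X' \to s_n$ is a strict log deformation. Throughout, base change and log blow-up preserve projectivity, verticality, and log smoothness, so $X'$ has the required type.

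The main obstacle I expect is the passage from the k\'et-local toric models to a coherent global construction: one must verify that the local cones and height functionals genuinely glue into a global complex $\Delta$ with a consistent integral structure (this is where the non-saturated ``multiplicity'' phenomena and the k\'et-local nature of the charts enter), and that the subdivision produced by KKMS can be taken simultaneously global, projective, unimodular with $\ell$ primitive, and fine enough to separate components for strictness. Reconciling unimodularity with the component-smoothness (strictness) requirement while keeping the subdivision projective is the delicate combinatorial point, and the rescaling by $n$ is exactly what makes a unimodular, $\ell$-primitive subdivision exist in the first place.
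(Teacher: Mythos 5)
The paper does not actually prove Proposition \ref{p:ssrt}: it is quoted as a corollary of Vidal, Proposition 2.4.2.1 (with a pointer to Saito, Theorems 1.8 and 2.9). Your proposal is an attempt at the argument underlying those references, and in outline it is the right one --- attach to $X\to s$ a conical polyhedral complex with integral structure and a height functional $\ell$ coming from the generator of $\bN$, observe that verticality makes $\ell$ positive away from the origin, identify strict log deformations with unimodular complexes without self-identifications on which $\ell$ is $1$ on primitive ray generators, and invoke the combinatorial semistable reduction theorem of KKMS after rescaling the base lattice by $n$. So you have correctly located the proof in the literature rather than taken a genuinely different route.

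As a self-contained proof, however, the proposal stops exactly where the content begins, namely at the three points you list as ``expected obstacles.'' (a) The global complex $\Delta$ for a general vertical log smooth $X/s$ has self-identifications of faces and, since $f$ need not be saturated, its local models involve torsion in $\overline M_X^{\gp}/\overline M_s^{\gp}$ (charts of the clean form $\bN\to Q$ with $X$ modeled on the log special fiber exist only k\'et-locally); KKMS Chapter II is formulated for toroidal embeddings without self-intersection, so reducing to that situation is precisely the work done by Vidal and Saito, not a formal gluing. (b) The assertion that a log blow-up of $X\times_s s_n$ ``is exactly a subdivision of $\Delta$'' is wrong as stated: only subdivisions defined by coherent log ideals (equivalently, projective subdivisions) are realized by log blow-ups, and the proposition demands a log blow-up, not an arbitrary proper log modification; you must therefore verify that the KKMS subdivision is of this type, which is true but has to be part of the argument. (c) A ``barycentric-type'' refinement to separate components destroys both unimodularity and the condition $\ell=1$ on rays, so strictness requires re-running the unimodularization (possibly enlarging $n$ again) while preserving projectivity; this interaction is the delicate combinatorial step and is left unresolved. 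None of these is fatal to the strategy, but until they are supplied the proposal is a correct road map to the cited results rather than a proof.
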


  This is a corollary to \cite{V} Proposition 2.4.2.1 
(cf.\ \cite{KKN} Remark after {\sc Assumption} 8.1). 
  See also \cite{Saito} Theorem 1.8 and Theorem 2.9. 

\begin{para}
\label{reduce2sss}
  We prove the case of (1) of Lemma \ref{l:HD} where $S=(\Spec \bC, P \oplus \bC^{\times})$ with $P$ being an fs monoid. 
  Take a local homomorphism $P \to \bN$, and we apply Proposition \ref{p:ssrt} to the base-changed family on the standard log point obtained by $P\to \bN$.
  Then we have a positive integer $n$ such that the base-changed family obtained by $P \to \bN \overset n \to \bN$ satisfies the condition in \ref{potentialsss}. 
  Hence, by \ref{potentialsss}, we see that (1) of Lemma \ref{l:HD} is valid for the base-changed family. 
  But, since the original family $f$ is saturated and the underlying morphism of fs log points is an isomorphism, the underlying morphism of $f$ and that of the base-changed family are isomorphic to each other via the natural isomorphisms. 
  Further, the log de Rham complex for the base-changed family is the pullback of that for the original family.
  Hence the spectral sequence concerned for $f$ is isomorphic to that for the base-changed one so that we see that (1) of Lemma \ref{l:HD} is valid for $f$, 
which completes the proof of Lemma \ref{l:HD}. 
  Therefore, Theorem \ref{t:HdRss} and Theorem \ref{t:variant} are also proved. 
\end{para}

\begin{para}
  We prove Theorem \ref{t:Lef} in the same way.  
  Let $s \in S$ and it is enough to check the statement at $s$. 
  By the argument in \ref{reduce2sss}, there is a morphism $S' \to S$ from the standard log point 
whose image is $\{s\}$ such that the base-changed object satisfies the condition in \ref{potentialsss}. 
  Then by the log proper base change theorem in the log Betti cohomology (\cite{KN:lc} {\sc Proposition} 5.1 (cf.\ \cite{KN:lc} {\sc Remark} 5.1.1); here we use the assumption that our family is exact), we reduce the statement at $s$ to the special case in \ref{potentialsss}. 
\end{para}

As a corollary of Theorem \ref{t:Lef}, we have the following hard Lefschetz statement. 

\begin{prop}
\label{p:HL}
  The homomorphism 
$$l^{n-q} \colon H^q_{\bQ} \to H^{2n-q}_{\bQ} \qquad(q \le n)$$ 
is an isomorphism. 
\end{prop}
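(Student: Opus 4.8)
The plan is to deduce the hard Lefschetz isomorphism directly from the Lefschetz decomposition (Theorem \ref{t:Lef}), applied in both degrees $q$ and $2n-q$. Since $H^q_{\bQ}$ and $H^{2n-q}_{\bQ}$ are locally constant sheaves of finite-dimensional $\bQ$-vector spaces built from the primitive parts by that theorem, it suffices to check that $l^{n-q}$ is injective and surjective stalkwise, whence it is an isomorphism of sheaves.

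First I would record the one structural fact that drives everything: for $0 \le k \le n-j$ the map $l^k \colon H^j_{\bQ,\mathrm{prim}} \to H^{j+2k}_{\bQ}$ is injective, while $l^{n-j+1}$ annihilates $H^j_{\bQ,\mathrm{prim}}$ by the very definition of the primitive part. The injectivity is immediate from Theorem \ref{t:Lef}: for $k \le n-j$ one checks $j \le \min(j+2k,\, 2n-(j+2k))$, so $l^k H^j_{\bQ,\mathrm{prim}}$ occurs as one of the direct summands of $H^{j+2k}_{\bQ}$, and the component map of an isomorphism onto a summand is injective.

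Next, for $q \le n$ I would write an arbitrary element of $H^q_{\bQ}$ in its Lefschetz decomposition $x=\sum_{j} l^{\frac{q-j}2}a_j$ with $a_j \in H^j_{\bQ,\mathrm{prim}}$ and $j \le q$, $j \equiv q \bmod 2$, so that $l^{n-q}x=\sum_j l^{\frac{2n-q-j}2}a_j$. For each such $j$ one has $\frac{2n-q-j}2 \le n-j$, so these are images under injective maps and lie in distinct primitive summands of $H^{2n-q}_{\bQ}$; hence $l^{n-q}x=0$ forces every $a_j=0$, giving injectivity. For surjectivity I would expand $H^{2n-q}_{\bQ}$ by Theorem \ref{t:Lef} in degree $2n-q$ and observe that every summand indexed by $j \ge q+2$ vanishes, because then $\frac{2n-q-j}2 \ge n-j+1$ and $l^{n-j+1}$ kills the primitive part; thus the surviving summands are exactly those with $j \le q$, and these together form precisely the image $l^{n-q}H^q_{\bQ}$ computed above.

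The main obstacle is the bookkeeping in the surjectivity step: one must pin down the exact range of indices in the Lefschetz decomposition of the high degree $H^{2n-q}_{\bQ}$ and verify that the a priori extra primitive summands (those with $q < j \le n$) are forced to vanish, so that the two decompositions match summand by summand under $l^{n-q}$. Everything else is formal once the injectivity of $l^k$ on the primitive parts for $k \le n-j$, together with the vanishing $l^{n-j+1}H^j_{\bQ,\mathrm{prim}}=0$, is in hand.
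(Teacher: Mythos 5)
The paper states Proposition \ref{p:HL} with no proof at all, only the phrase ``as a corollary of Theorem \ref{t:Lef}'', so the comparison here is with the deduction you would need to supply in any case. Your index arithmetic is correct, but the argument has a genuine gap: every essential step invokes Theorem \ref{t:Lef} in a degree strictly above the middle dimension $n$, where that theorem does not apply. The setup \ref{pairing} carries the standing assumption $q\le n$, under which $H^q_{\bQ,\mathrm{prim}}$ is defined and under which Theorem \ref{t:Lef} is stated; and the theorem cannot simply be extended to degrees $m>n$ with the index set $j\le\min(n,m)$ that you are implicitly using, because in that range the correct index set is $j\le 2n-m$: a summand $H^j_{\bQ,\mathrm{prim}}$ with $2n-m<j\le n$ is killed by $l^{(m-j)/2}$ (since then $(m-j)/2\ge n-j+1$), so an ``isomorphism'' including such summands would force those primitive parts to vanish --- false already for $H^2_{\mathrm{prim}}$ of a surface in degree $4$. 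Concretely: your ``structural fact'' that $l^k$ is injective on $H^j_{\bQ,\mathrm{prim}}$ for $k\le n-j$ reads off a summand of $H^{j+2k}$ with $j+2k$ possibly as large as $2n-j$; your injectivity step needs the sum $\sum_j l^{(2n-q-j)/2}H^j_{\bQ,\mathrm{prim}}$ to be direct inside $H^{2n-q}$; and your surjectivity step expands $H^{2n-q}$ itself. None of this is available from the decomposition in degrees $\le n$ alone: a graded space with $H^0=\bQ$, $H^1=H^2=0$, $l=0$, $n=1$ satisfies the decomposition for all $m\le n$ but not hard Lefschetz.

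What is missing is the Lefschetz decomposition above the middle degree (with index set $j\le 2n-m$), or equivalently hard Lefschetz itself; these come as one package, and neither half follows formally from the other. In the paper this package is obtained not formally but by the same reduction used to prove Theorem \ref{t:Lef}: restrict to a stalk, base-change to a standard log point by log proper base change, pass to a strict log deformation by a log blow-up, and transport the full Lefschetz package of \cite{F} for $H^{\ast}(X,K_{\bQ})$ through the compatibilities of $l$, cup product and trace established in Section \ref{s:sss}. If you first record the high-degree decomposition as an additional output of that reduction, your summand-matching argument then does close the proof.
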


  Here we prove the fact mentioned in \ref{potentialsss}. 

\begin{prop}
\label{p:complement}
  Let $S$ be the standard log point and $f\colon X \to S$ a morphism of fs log analytic spaces.
  Let $X' \to X$ be a log blow-up. 
  If $X' \to X \to S$ is saturated, then $f$ is saturated. 
\end{prop}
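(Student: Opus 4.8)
The plan is to reduce the statement to a combinatorial assertion about a subdivision of a rational polyhedral cone and to prove that assertion by a covering argument on the faces of the cone. Recall that saturatedness of a morphism of fs log analytic spaces is tested on the stalks of the sharp monoids $\overline M$: it means integrality together with saturatedness, in the sense of Ogus, of each homomorphism $\overline M_{S,f(x)} \to \overline M_{X,x}$. Since the assertion is to be verified at every $x \in X$ and $p$ is surjective, I would fix $x = p(x')$ and consider the points of $p^{-1}(x)$. For such a point $x'$, with $s = f(x)$, write $P = \overline M_{S,s}$, $Q = \overline M_{X,x}$, $Q' = \overline M_{X',x'}$, and let $a\colon P \to Q$, $b\colon Q \to Q'$ be the induced homomorphisms, so that the hypothesis gives that $b \circ a$ is saturated and I must show that $a$ is saturated. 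Because $S$ is the standard log point, $P = \bN$; set $u = a(1) \in Q$. A first observation is that integrality of $a$ is automatic, since any homomorphism from $\bN$ into an integral monoid is integral (Kato's cancellation criterion is checked directly); hence only the saturation condition on $a$ is at issue.

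Next I would record the two structural facts about $b$ coming from the fact that $p$ is a log blow-up. Locally $p$ corresponds to a subdivision of the dual cone $\sigma = \Hom(Q,\bN) \subseteq N_{\bR}$, where $N = \Hom(Q^{\gp},\bZ)$; thus $b^{\gp}\colon Q^{\gp} \to Q'^{\gp}$ is an isomorphism, under which $Q \subseteq Q' \subseteq Q^{\gp}$, and $\sigma' = \Hom(Q',\bN)$ is a cone of the subdivision of $\sigma$. Moreover, since $p$ is surjective, as $x'$ ranges over $p^{-1}(x)$ the cones $\sigma'$ range over all maximal cones of the subdivision. Viewing $u \in Q^{\gp} = M$ as a linear functional $\phi$ on $N$ that is $\ge 0$ on $\sigma$, Ogus' characterization of saturated homomorphisms translates the saturation condition for $a$ (resp.\ for $b \circ a$) into the following primitivity condition: for every face $\tau$ of $\sigma$ (resp.\ every face of every $\sigma'$) on which $\phi$ does not vanish, the restriction of $\phi$ to the lattice $\tau^{\gp} = \mathrm{span}(\tau) \cap N$ is surjective onto $\bZ$. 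The hypothesis that $b \circ a$ is saturated at all points over $x$ provides this for the faces of every maximal cone $\sigma'$ of the subdivision.

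The key step is then the descent from the faces of the $\sigma'$ to the faces of $\sigma$. Fix a face $\tau$ of $\sigma$ with $\phi|_{\tau} \not\equiv 0$. The subdivision of $\sigma$ induces a subdivision of $\tau$ whose cones are exactly the faces of the various maximal cones $\sigma'$ that lie in $\tau$. Choose a top-dimensional cone $\tau'$ of this induced subdivision of $\tau$. Then $\mathrm{span}(\tau') = \mathrm{span}(\tau)$, so $\tau'^{\gp} = \mathrm{span}(\tau) \cap N = \tau^{\gp}$; moreover $\tau'$ is a face of some maximal cone $\sigma'$, and $\phi$ does not vanish on $\tau'$ since it spans the same space as $\tau$. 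Applying the hypothesis at the point $x'$ corresponding to that $\sigma'$, the restriction of $\phi$ to $\tau'^{\gp} = \tau^{\gp}$ is surjective onto $\bZ$, which is precisely the condition required for $\tau$. As $\tau$ was arbitrary, $a$ is saturated, and as $x$ was arbitrary, $f$ is saturated.

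I expect the main obstacle to lie not in the geometric descent, which is immediate once one has the equality $\tau'^{\gp} = \tau^{\gp}$ for a top-dimensional cone $\tau'$ of the induced subdivision of $\tau$, but in the careful bookkeeping of the translation between the monoid-theoretic notion of saturatedness (Ogus' face criterion) and the cone-theoretic primitivity statement, together with a clean justification of the two properties of a log blow-up invoked above: that it induces an isomorphism on $\overline M^{\gp}$ and that, being surjective, it realizes every maximal cone of the subdivision of $\sigma$ at some point of $p^{-1}(x)$.
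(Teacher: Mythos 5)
Your argument is correct, and it reaches the same local statement as the paper (reduce to the stalks $P=\overline M_{S,s}\cong\bN\to Q=\overline M_{X,x}\to Q'=\overline M_{X',x'}$, note that integrality of $\bN\to Q$ is automatic, then descend the saturation condition from the subdivision back to $\sigma=\Hom(Q,\bN)$), but it runs the descent through a different criterion. The paper invokes Tsuji's characterization (\cite{T} Theorem I.5.1): saturatedness of the integral local map $\bN\to Q$ is tested only on the height-one primes $\frak q$ of $Q$ lying over the maximal ideal of $\bN$, the condition being that the ramification index there equals $1$. This makes the descent essentially trivial: a height-one prime of $Q$ corresponds to an extremal ray of $\sigma$, such a ray is automatically a cone of any subdivision and hence lifts to a height-one prime $\frak q'$ of some $Q'$ at a point $x'$ over $x$, and the ramification index is unchanged under this lift; no covering argument over all faces is needed. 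You instead use Ogus' all-faces criterion (the one the paper uses for Proposition \ref{sp:sat}), which forces you to handle an arbitrary face $\tau$ of $\sigma$ via the induced subdivision of $\tau$ and a top-dimensional cone $\tau'$ thereof with $\mathrm{span}(\tau')\cap N=\mathrm{span}(\tau)\cap N$. That step is sound (every cone of the subdivision contained in $\tau$ is a face of a full-dimensional cone of the subdivision, which is realized as $\overline M_{X',x'}$ at the closed point of the corresponding chart over $x$), but it is where all the bookkeeping you anticipate actually lives; Tsuji's criterion buys you a shorter proof by shrinking the set of faces to be checked to the rays, at the cost of citing a less elementary theorem. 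One cosmetic correction: the maximal cones of the subdivision are realized over $x$ not because $p$ is surjective but because each affine chart $U_{\sigma'}$ contributes its closed orbit over $x$; surjectivity of $p$ is only needed to know that every $x\in X$ is hit at all.
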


\begin{pf}
  By \cite{K} {\sc Corollary} (4.4) (ii), $X$ is integral over $S$.
  Let $x \in X$. 
  Let $Q=\overline M_{X,x}$ and 
$P=\overline M_{S,f(x)}\cong \bN$. 
  To prove that $h\colon P \to Q$ is saturated, we use the criterion \cite{T} Theorem I.5.1 for saturated morphisms.   
  Let $\frak q$ be a prime ideal of $Q$ of height 1 lying over the maximal ideal of $P$ (i.e., the inverse image in $P$ of $\frak q$ is the maximal ideal). 
  Take a point $x'$ of $X'$ lying over $x$ such that there is a prime ideal $\frak q'$ of $Q':=\overline M_{X',x'}$ lying over $\frak q$. 
  (Note that, then the height of $\frak q'$ is 1.) 
  By assumption, the ramification index of $X' \to X \to S$ at $\frak q'$ is 1. 
  This implies that the ramification index of $f$ at $\frak q$ is 1. 
  By \cite{T} Theorem I.5.1, we conclude that $h$ is saturated, as desired. 
\end{pf}

\section{Proof of Theorem \ref{t:main}}\label{s:pf}
  We prove Theorem \ref{t:main}. 

\begin{para}
\label{pf1}
  Since the statement is k\'et local on the base $S$, by the argument in \ref{makesat}, we may assume that $f$ is saturated and hence the conclusion in Lemma \ref{l:HD} 
holds for $f$. 
  In particular, we may assume that the triple 
$\bigl(H^q_{\bZ}, (R^qf_{\ket*}\omega^{\cdot\ge p}_{X/S,\ket})_{p \in \bZ}, (\ , \ )\bigr)$
concerned is the pullback of the nonk\'et triple 
$$\bigl(H^q_{\bZ}, (R^qf_{*}\omega^{\cdot\ge p}_{X/S})_{p \in \bZ}, (\ , \ )\bigr).$$
  Hence it is enough to show that this nonk\'et triple makes a nonk\'et PLH.
\end{para}

\begin{para}
  We prove that the nonk\'et triple in \ref{pf1} satisfies the 
condition (1) in \ref{plh}. 
  To see this, by the definition of the pairing in Section \ref{s:mt} and by the fact that $l$ in \ref{pairing} is a homomorphism of $(1,1)$-type, 
it is enough to show that for any section of $\tau^*F^p$ $(p \in \bZ)$, its Lefschetz components belong to $\tau^*F^p$. 
  It suffices to show that the homomorphism 
$$l^{n-q} \colon H^q \to H^{2n-q} \qquad (q \le n)$$ 
of nonk\'et triples, which is underlain by the isomorphism of $\bQ$-vector spaces by Proposition \ref{p:HL}, is strictly compatible with the filtration $(\tau^*F^p)_{p\in\bZ}$. 
  This is reduced to the strict compatibility with the filtration $(F^p)_{p\in\bZ}$, and is further reduced by 
Nakayama's lemma to the case where $S=\{s\}$ is an fs log point. 
  If the base has the trivial log structure, the last strict compatibility is classical. 
  Hence we may assume that $P=\overline M_{S,s}$ is isomorphic to $\bN$. 
  Then, by the argument in \ref{reduce2sss}, \ref{potentialsss} implies 
that there is a positive integer $n$ such that the base-changed nonk\'et triples on the standard log point 
obtained by $P \cong \bN \overset n \to \bN$ are nonk\'et PLHs. 
  Since $f$ is saturated, we may assume that 
both $H^q$ and $H^{2n-q}$ are nonk\'et PLHs. 
  Then, by the condition (2) in \ref{plh}, the desired strict compatibility is reduced to the usual nonlog statement that any homomorphism of Hodge structures is strictly compatible with the Hodge filtration. 
  Alternatively, we can apply the next general proposition. 
\end{para}

\begin{prop}
  Let $S$ be an fs log analytic space. 
  Then any homomorphism $f \colon H_1 \to H_2$ of log Hodge structures on $S$ is strictly compatible with 
the Hodge filtrations by $\cO_S$-modules and with that by $\cO_S^{\log}$-modules. 
\end{prop}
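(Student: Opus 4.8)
The plan is to reduce the statement to the classical fact that a morphism of pure Hodge structures is strictly compatible with the Hodge filtration. First I would observe that it suffices to treat the $\cO_S$-filtration. Indeed, $f$ is in particular a morphism of the underlying local systems $L_{1,\bZ}\to L_{2,\bZ}$ on $S^{\log}$, so it commutes with the logarithms $N_1,\dots,N_r$ of the local monodromies. Under the identification $(\tau^*L_{\cO})\otimes\cO_S^{\log}\cong L_{\bZ}\otimes\cO_S^{\log}$, the $\cO_S^{\log}$-filtration is obtained from (the pullback of) the $\cO_S$-filtration by applying the $\cO_S^{\log}$-linear automorphism $\exp(\sum_j u_j N_j)$, the change from the holomorphic frame to the flat frame. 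Since $f$ commutes with this automorphism, strictness of $f$ for the $\cO_S$-filtration implies, by flat base change along $\cO_S\to\cO_S^{\log}$ (which preserves the relevant images and intersections) followed by conjugation by $\exp(\sum_j u_j N_j)$, strictness for the $\cO_S^{\log}$-filtration.

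For the $\cO_S$-version I would argue that strictness can be checked on the fibers at the points of $S$. The key point is that $\operatorname{im}(f)$ is a sub-local-system of $L_{2,\bZ}$, so $\rank f$ is locally constant on $S$; meanwhile each graded map $\gr^p f\colon \gr_F^p L_{1,\cO}\to\gr_F^p L_{2,\cO}$ has lower semicontinuous rank, the sum $\sum_p\rank\gr^p f$ is dominated by $\rank f$, and equality at a point $s$ holds precisely when $f$ is fiberwise strict at $s$. Thus, granting fiberwise strictness at every $s$, the constancy of $\rank f$ forces each $\rank\gr^p f$ to be locally constant as well (each is at most its generic value, and the values sum to the constant $\rank f$). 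Then $f(F_1^p)$ and $f(L_{1,\cO})\cap F_2^p$ are locally free subsheaves of locally constant rank that coincide fiberwise, hence coincide. This reduces the problem to fiberwise strictness, and the fiber at $s$ is the canonical fiber $(L_{\bC},F_0)$ of the pullback of the log Hodge structure to the fs log point $\{s\}$.

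It remains to prove strictness on such a fiber, i.e. with respect to the limit filtration $F_0$ of the nilpotent orbit attached to $\{s\}$. Here I would use the conjugation trick once more at the numerical level: for $z=(z_j)$ with $\Im z_j$ sufficiently large, $F(z)=\exp(\sum_j z_j N_j)F_0$ underlies a pure Hodge structure by positivity, and since $f$ commutes with $\exp(\sum_j z_j N_j)$, strictness with respect to $F_0$ is equivalent to strictness with respect to $F(z)$. At such a $z$ the map $f$ is a $\bZ$-linear, hence real, $F$-compatible map between pure Hodge structures, so it is a morphism of Hodge structures, necessarily zero if the two weights differ, and is therefore strictly compatible with $F(z)$ by the classical result on morphisms of pure Hodge structures. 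This yields $F_0$-strictness and completes the argument.

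The conceptual content is entirely classical; the real work is the descent, and the step I expect to be the main obstacle is the passage from fiberwise strictness to strictness of the sheaf morphism. That passage rests on two facts that must be handled with care: that $\operatorname{im}(f)$ is a local subsystem, so that $\rank f$ is locally constant and the semicontinuity argument pins down each $\rank\gr^p f$; and that the formation of the Hodge filtration, its graded pieces, and the relevant images and intersections commutes with restriction to a fiber, which is exactly where the hypothesis that the $F^p$ are locally direct summands, together with the identification of the fiber with the pullback log Hodge structure on $\{s\}$, becomes essential.
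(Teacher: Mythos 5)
Your proposal is correct and follows essentially the same route as the paper: reduce the $\cO_S^{\log}$-statement to the $\cO_S$-statement by flatness of $\cO_S^{\log}$ over $\cO_S$, reduce to the fibres by the semicontinuity-of-rank (Deligne's ``length'') argument, and at an fs log point pass to a nearby specialization $\exp(\sum_j z_jN_j)F_0$ to invoke classical strictness for morphisms of pure Hodge structures. The paper merely packages the middle step as $E_1$-degeneracy of the spectral sequence of the two-term complex $H_1\to H_2$ and cites \cite{D1}, which is exactly the rank-counting you carry out by hand.
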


\begin{pf}
  Consider the complex 
$$ \cdots \to 0 \to H_1 \overset f \to H_2 \to 0 \to \cdots$$ 
of $\cO_S$-modules (resp.\ $\cO_S^{\log}$-modules).
  The strict compatibility with the Hodge filtration by $\cO_S$-modules (resp.\ $\cO_S^{\log}$-modules) 
is equivalent to the $E_1$-degeneracy of the associated spectral sequence of this complex. 
  Since $\cO^{\log}_S$ is flat over $\cO_S$, 
the statement for $\cO_S^{\log}$-modules is reduced to that for $\cO_S$-modules. 
  By the usual argument by the lengths in \cite{D1} (cf.\ \cite{FN2} Appendix), 
we reduce to the case where $S$ is an fs log point. 
  (Note that the construction of the spectral sequence is compatible with the base change.)
  By the definition of log Hodge structure, the last one is reduced to the usual nonlog statement that any homomorphism of Hodge structures is strictly compatible with the Hodge filtration. 
\end{pf}

\begin{para}
  Thus the condition (1) in \ref{plh} is satisfied.
  To complete the proof of Theorem \ref{t:main}, the rest is to verify the conditions (2) and (3) in \ref{plh}. 
  By Theorem \ref{t:variant} (2), we may assume that the base $S=\{s\}$ is an fs log point. 
  If $S$ has the trivial log structure, the conclusion is classical. 
  Hence we may assume that $P=\overline M_{S,s}$ is isomorphic to $\bN$. 
  Then, again by the argument in \ref{reduce2sss}, \ref{potentialsss} implies 
that there is a positive integer $n$ such that the base-changed nonk\'et triple on the standard log point 
obtained by $P \cong \bN \overset n \to \bN$ is a PLH. 
  Hence, by the next Lemma \ref{l:NtoP}, we conclude that the original nonk\'et triple satisfies the conditions (2) and (3) in \ref{plh}, which completes the proof of Theorem \ref{t:main}.
\end{para}

\begin{lem}
\label{l:NtoP}
  Let $H=(L_{\bZ}, (F^p)_{p\in \bZ}, (\ ,\ ))$ be a nonk\'et triple as in $\ref{triple}$ on the standard log point.
  Let $n$ be a positive integer. 

$(1)$ 
  If the base-changed nonk\'et triple on the standard log point obtained by 
$\bN \overset n \to \bN$ satisfies the positivity ($\ref{plh}$ $(2)$), then $H$ also satisfies the positivity.

$(2)$ 
  If the base-changed nonk\'et triple on the standard log point obtained by 
$\bN \overset n \to \bN$ satisfies the Griffiths transversality ($\ref{plh}$ $(3)$), then $H$ also satisfies the Griffiths transversality. 
\end{lem}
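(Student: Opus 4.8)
The plan is to compare $H$ with its base change through the morphism of standard log points $\varphi\colon S'\to S$ induced by $\bN\overset n\to\bN$, where $S$ denotes the given standard log point and $S'$ the base-changed one. First I would record the relevant base-change formulas. On Kato--Nakayama spaces $\varphi^{\log}\colon (S')^{\log}\to S^{\log}$ is the $n$-fold covering of the circle $S^{\log}\cong\{\alpha\in\bC\mid\alpha\overline\alpha=1\}$; writing $\ell$ for the image under $\log\colon M_S^{\gp}\to\cO_S^{\log}$ of the generator of $\overline M_S\cong\bN$, one has $\varphi^*\ell=n\ell'$, and hence $\varphi^*(d\ell)=n\,d\ell'$ on $\omega_S^{1,\log}$, which is free of rank one on $d\ell$. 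Consequently the logarithm $N$ of the local monodromy of the pulled-back lattice equals $n$ times that of $L_{\bZ}$, while, via the canonical trivialization $L_{\cO}\cong L_{\bZ,t}\otimes\bC$ at a base point $t$ (the one identifying $v$ with the single-valued section $\exp(-(2\pi\sqrt{-1})^{-1}\ell N)v$, whose monodromy $\ell\mapsto\ell+2\pi\sqrt{-1}$ cancels that of $v$), the Hodge filtration $F^{\bullet}$ on $L_{\cO}$ is left unchanged by the base change. Checking these formulas, and in particular the invariance of $F^{\bullet}$ under $\varphi^*$ through the identification $L_{\cO}\cong\tau_*(L_{\bZ}\otimes\cO_S^{\log})$ of \ref{triple}, is the bookkeeping core of the argument.

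For part (2) I would use the standard reformulation that, on a standard log point, Griffiths transversality (\ref{plh}\,(3)) holds if and only if $N F^p\subseteq F^{p-1}$ for all $p$: this follows from $\omega_S^{1,\log}$ being free on $d\ell$ and the connection on the canonical extension having residue a constant multiple of $N$. Since base change by $\bN\overset n\to\bN$ replaces $N$ by $nN$ and fixes $F^{\bullet}$, and since $N F^p\subseteq F^{p-1}$ is equivalent to $nN F^p\subseteq F^{p-1}$ (as $F^{p-1}$ is a $\bC$-subspace, stable under scalar multiplication), the two transversality conditions literally coincide. This gives (2) at once, in fact in both directions.

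For part (1) I would specialize explicitly. A homomorphism $h\colon\cO_{S,t}^{\log}\to\bC$ is determined by $s:=h(\ell)\in\bC$, and the resulting filtration is the nilpotent orbit $F^{\bullet}(h)=\exp(-(2\pi\sqrt{-1})^{-1}sN)F^{\bullet}$ on $L_{\bZ,t}\otimes\bC$. Given $t'\in(S')^{\log}$ over $t$ and $h'$ with $s':=h'(\ell')$, the formulas $\varphi^*\ell=n\ell'$ and $N'=nN$ give $\exp(-(2\pi\sqrt{-1})^{-1}s'N')=\exp(-(2\pi\sqrt{-1})^{-1}(ns')N)$, so that the specialization of the base-changed triple at $(t',h')$ coincides with the specialization of $H$ at $(t,h)$ precisely when $s=ns'$. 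The induced homomorphism $M_{S,s}\to\bC$ sends the generator to $\exp(s)$, so being near the structural homomorphism means $\operatorname{Re}(s)\to-\infty$; under $s=ns'$ this is equivalent to $\operatorname{Re}(s')\to-\infty$. Hence, given any $h$ near the structural homomorphism, I would choose a point $t'$ over $t$ and set $h'(\ell')=s/n$; then $h'$ is near the structural homomorphism of $S'$, so by the assumed positivity of the base-changed triple the specialization at $(t',h')$, which equals that of $H$ at $(t,h)$, is a polarized Hodge structure. This establishes the positivity of $H$.

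I expect the main obstacle to be the first step, namely pinning down that $F^{\bullet}$ is genuinely carried to itself by the base change (the compatibility of the identifications of \ref{triple} with pullback along $\varphi$), together with the matching of the \emph{sufficiently near} neighborhoods in (1) and the correct normalization of the residue and monodromy constants. Once those identifications are fixed, both parts reduce to the elementary observation that multiplying the monodromy logarithm by the nonzero integer $n$ alters neither the inclusions $NF^p\subseteq F^{p-1}$ nor the nilpotent orbit $\exp(-(2\pi\sqrt{-1})^{-1}sN)F^{\bullet}$ evaluated along $s=ns'$.
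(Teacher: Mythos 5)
Your argument is correct and follows essentially the same route as the paper: for positivity you factor a specialization $h$ of $H$ through a specialization $h'$ of the base-changed triple with $h'(\log g')=h(\log g)/n$, matching neighborhoods via $\ep\mapsto\ep^n$ (equivalently $\operatorname{Re}(s)=n\operatorname{Re}(s')$), and for Griffiths transversality you use that the base change multiplies the relevant data by the nonzero integer $n$ while fixing $F^{\bullet}$ (the paper phrases this as the pullback isomorphism on $\omega^{1,\log}\otimes H$ preserving the Hodge filtration rather than via $NF^p\subseteq F^{p-1}$, but the content is the same).
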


\begin{pf}
  (1) Let $S' \to S$ be the induced morphism of the standard log points, let $t' \in S^{\prime\log}$, and let $t \in S^{\log}$ be the image of $t$. 
  Let $g'$ be the generator of $\bN$ on $S'$ and let $h'\colon \cO^{\log}_{S', t'} \to \bC$ be a $\bC$-algebra homomorphism. 
  By the assumption, there is a positive real number $\ep$ such that, if $\exp(h'(\log(g')))< \ep$, then the specialization with respect to the composite 
$\cO^{\log}_{S,t} \to \cO^{\log}_{S', t'} \overset {h'} \to \bC$ is a polarized Hodge structure. 
  Let $g$ be the generator of $\bN$ on $S$ and let $h \colon \cO^{\log}_{S,t} \to \bC$ be a $\bC$-algebra homomorphism. 
  Then $h$ factors uniquely through an $h'\colon \cO^{\log}_{S', t'} \to \bC$ the image of $\log(g')$ by which is $h(\log(g))/n$. 
  Assuming $\exp(h(\log(g)))< \ep^n$, we have $\exp(h'(\log(g')))< \ep$, and the specialization with respect to $h$ is a polarized Hodge structure. 

  (2) We use the same notation as in the proof of (1). 
  Let $(F^p)_{p \in \bZ}$ be the Hodge filtration. 
  Let 
  $a \in \omega^{1,\log}_S \otimes H_{\bZ}$ belong to the image of $\cO^{\log}_S \otimes F^p$ over $S$. 
  We want to prove $a \in \omega^{1,\log}_S \otimes F^{p-1}$. 
  But there is a natural isomorphism from the pullback of $\omega^{1,\log}_S \otimes H_{\bZ}$ to 
$\omega^{1,\log}_{S'} \otimes H_{\bZ}$, which preserves the Hodge filtration. 
  By assumption, the image of $a$ in $\omega^{1,\log}_{S'} \otimes H_{\bZ}$ belongs to $\omega^{1,\log}_{S'} \otimes F^{p-1}$.
  Hence $a$ belongs to $\omega^{1,\log}_S \otimes F^{p-1}$, as desired. 
\end{pf}

  Finally we discuss a difficulty in trying to generalize Theorem \ref{t:main} to the case of general base. 

  As for Griffiths transversality, we have the following proposition. 

\begin{prop}
\label{p:GT}
  Let $H=(L_{\bZ}, (F^p)_{p\in \bZ}, (\ ,\ ))$ be a nonk\'et triple as in $\ref{triple}$ on an fs log analytic space $S$. 

  $(1)$  Assume that $S$ is an fs log point $(\Spec \bC, P \oplus \bC^{\times})$, where $P$ is an fs monoid. 
  Let $a_i\colon P \to \bN$ $(i \in I)$ be a family of local homomorphisms. 
  Assume that the set $\{a_i^{\gp}\,|\,i \in I\}$ spans $\Hom(P, \bQ)$.
  If the base-changed nonk\'et triples on the standard log point obtained by $a_i$ satisfy the Griffiths transversality, then $H$ also satisfies the Griffiths transversality. 

  $(2)$ Let $S' \to S$ be a log blow-up. 
  If the pullback of $H$ on $S'$ satisfies the Griffiths transversality, then $H$ also satisfies the Griffiths transversality. 
\end{prop}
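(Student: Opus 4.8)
The plan is to prove (1) first, since it is the heart of the matter, and then to deduce (2) from (1) via the toric description of log blow-ups. Throughout, recall that Griffiths transversality \ref{plh} (3) is imposed only after pulling back to fs log points, so it is really a condition on log points; and on $S=(\Spec\bC, P\oplus\bC^{\times})$ one has $\overline M_{S,s}=P$ together with a canonical identification $\omega_S^{1,\log}=\cO_S^{\log}\otimes_{\bZ}P^{\gp}$, under which $d\otimes 1$ decomposes along $P^{\gp}$.

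For (1), the key structural point is that over a log point the connection $\nabla=d\otimes 1$ is the residue connection: if the monodromy logarithms are recorded by an element of $P^{\gp}\otimes\End(L_{\bQ})$, then for $v\in L_{\cO}$ one has $\nabla v\in \omega^1_S\otimes_{\cO_S}L_{\cO}=P^{\gp}_{\bC}\otimes_{\bC}L_{\cO}$, i.e.\ with constant ($\log$-free) coefficients in the finite-dimensional $L_{\cO}$. Writing $\nabla v=\sum_j\alpha_j\otimes w_j$ for a $\bZ$-basis $\{\alpha_j\}$ of $P^{\gp}$ and $w_j\in L_{\cO}$, Griffiths transversality on $S$ is then equivalent to $w_j\in F^{p-1}$ for all $j$ and $p$, that is, to the vanishing of the $\bC$-linear map
$$\Psi_p\colon \Hom(P,\bQ)\otimes\bC\to\Hom_{\bC}(F^p, L_{\cO}/F^{p-1}),\qquad \phi\mapsto\bigl[\,v\mapsto (\phi\otimes\id)(\nabla v)\bmod F^{p-1}\,\bigr].$$
I would then record the base-change compatibility along $a_i\colon P\to\bN$: it induces an isomorphism on canonical fibers $L_{\cO}\overset{\cong}{\to}L_{\cO,i}$ matching $F^{\bullet}$ with the Hodge filtration of the base-changed triple, and it acts on $\omega_S^{1,\log}$ through $a_i^{\gp}\colon P^{\gp}\to\bZ$ on the second factor. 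Hence the base-changed connection is $(a_i^{\gp}\otimes\id)\circ\nabla$, and Griffiths transversality for the base change along $a_i$ is exactly the vanishing $\Psi_p(a_i^{\gp})=0$. Since $\{a_i^{\gp}\mid i\in I\}$ spans $\Hom(P,\bQ)$ and $\Psi_p$ is linear, the hypothesis forces $\Psi_p\equiv 0$, hence $w_j\in F^{p-1}$ for all $j$, which is Griffiths transversality on $S$.

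For (2), since Griffiths transversality is checked at fs log points and log blow-ups are stable under base change, I may assume $S=(\Spec\bC,P\oplus\bC^{\times})$ is an fs log point and that $p\colon S'\to S$ is the log blow-up attached to a subdivision $\Sigma$ of the cone $\sigma=\Hom(P,\bR_{\ge 0})\subseteq N_{\bR}:=\Hom(P^{\gp},\bR)$. Because $P$ is sharp, $\sigma$ is strongly convex of full dimension, so the rays of $\Sigma$ span $N_{\bR}$; their primitive generators are monoid homomorphisms $a_i\colon P\to\bN$ whose groupifications $a_i^{\gp}$ therefore span $\Hom(P,\bQ)$. Each such ray lies in a maximal cone $\tau_i\in\Sigma$, which corresponds to a point $s'_i\in S'$ with $\overline M_{S',s'_i}=Q_{\tau_i}:=\tau_i^{\vee}\cap P^{\gp}$, and the same primitive generator defines $c_i\colon Q_{\tau_i}\to\bN$ with $c_i\circ(P\hookrightarrow Q_{\tau_i})=a_i$. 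Now Griffiths transversality on $S'$ gives it at $s'_i$, i.e.\ for the base change of $H$ along $P\to Q_{\tau_i}$; and since Griffiths transversality is preserved under pullback (the connection and the locally-direct-summand filtration both base-change, and the containment is preserved), pulling back along $c_i$ yields Griffiths transversality for the base change of $H$ along $a_i$. Applying (1) to the spanning family $\{a_i\}$ then gives Griffiths transversality on $S$.

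\textbf{Main obstacle.} The genuinely non-formal input is the reduction in (1) of Griffiths transversality to the single linear map $\Psi_p$ on $\Hom(P,\bQ)$: one must verify that over a log point the relevant component of $\nabla v$ lands in the finite-dimensional canonical fiber $L_{\cO}$ (log-free coefficients), and that the formation of $(L_{\cO},F^{\bullet},\nabla)$ is compatible with base change along $a_i$, so that the base-changed connection is identified with $(a_i^{\gp}\otimes\id)\circ\nabla$. Once this is in place, the descent is the elementary fact that a linear map vanishing on a spanning set vanishes identically. For (2) the only delicate point is the toric bookkeeping matching rays of $\Sigma$ with points $s'_i$ of $S'$ so that the composite $P\to Q_{\tau_i}\to\bN$ recovers the ray; the spanning of $\Hom(P,\bQ)$ by these rays is then automatic from full-dimensionality of $\sigma$.
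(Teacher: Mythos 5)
Your argument for part (1) is correct and is essentially the paper's own proof: the paper writes $\nabla$ of a section of $F^p$ as $\sum_j h_j\otimes d\log t_j$ for a $\bZ$-basis $(t_j)$ of $P^{\gp}$, observes that the base change along $a_i$ replaces this by $\sum_j a_i^{\gp}(t_j)\,h_j\otimes d\log t$, and solves the resulting linear system using the spanning hypothesis; your map $\Psi_p$ is just a coordinate-free packaging of the same linear algebra.

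Part (2) contains a genuine gap. You take as your spanning family the primitive generators of the rays of the subdivision $\Sigma$ of $\sigma=\Hom(P,\bR_{\ge 0})$, but a ray of $\Sigma$ lying in a proper face of $\sigma$ (and the rays of $\sigma$ itself are always among the rays of $\Sigma$) gives a homomorphism $P\to\bN$ that is \emph{not} local, so it does not define a morphism from the standard log point to $S$ (the square against the structure maps to $\bC$ fails to commute) and cannot be fed into (1); correspondingly, the orbit of such a ray in the toric modification does not lie over the closed point, so there is no point $s'_i$ of $S'$ for it. Restricting to the interior rays does not repair this, since they need not span $\Hom(P,\bQ)$: for a two-dimensional $\sigma$ subdivided by a single interior ray there is only one of them. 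Your maximal-cone bookkeeping has the same defect: for a proper face $\rho_i$ of a maximal cone $\tau_i$, the evaluation $c_i$ at the primitive generator of $\rho_i$ is not local on $Q_{\tau_i}$. The paper's route avoids all of this: it shows that \emph{every} morphism from the standard log point $S_0$ to $S$ factors through $S'$, because the projection $S_0\times_S S'\to S_0$ is a log blow-up of the standard log point and hence an isomorphism; one then applies (1) to the family of \emph{all} local homomorphisms $P\to\bN$, which certainly spans. Replacing your ray family by (say) all interior lattice points of $\sigma$, each of which lies in a maximal cone of $\Sigma$ and hence factors through a point of $S'$, would also close the gap, but the fiber-product argument is cleaner.
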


\begin{pf}
  $(1)$  In the following, we identify an $\cO_S$-module with a $\bC$-vector space. 
  Let $(t_j)_{1\le j \le n}$ be a $\bZ$-basis of $P^{\gp}$. 
  Then $(d\log t_j)_j$ is a $\bC$-basis of $\omega_S$. 
  Let $\sum_j h_j \otimes d\log t_j$ $(h_j \in H)$ belong to the image of $F^p$ in $H \otimes \omega_S$. 
  We want to prove any $h_j \in F^{p-1}$. 
  For each $i \in I$ and $j$, let $m_{ij}=a_i^{\gp}(t_j)\in \bN^{\gp}=\bZ$. 
  Then, the image of $\sum_{j} h_j \otimes d\log t_j$ in $H \otimes \omega_{S_0}$ via $a_i$, where $S_0$ is the standard log point, 
is $\sum_j m_{ij}h_j \otimes d\log t$, where $t$ is the generator of $\bN$. 
  By assumption, $\sum_j m_{ij}h_j$ belongs to $F^{p-1}$ and we have an equality 
$$(m_{ij})(h_j)=(f_i)$$ 
of matrices, 
where $f_i$ belongs to $F^{p-1}$. 
  Again by the assumption, we may assume that $I=\{1, \ldots, n\}$ and that $(m_{ij})$ is a regular matrix, and each $h_j$ is a linear combination of $f_i$. 
  Hence, $h_j \in F^{p-1}$ as desired. 

  $(2)$ To reduce (2) to (1), it is enough to show that any morphism from the standard log point $S_0$ to $S$ factors through $S'$. 
  Consider the fiber product $S_0 \times_SS'$ of fs log analytic spaces.  
  Then the projection $S_0 \times_SS' \to S_0$ is a log blow-up of the standard log point so that it is an isomorphism.  
  Thus $S_0 \to S$ factors through $S'$. 
\end{pf}

\begin{para}
\label{difficulty}
  In Proposition \ref{p:GT} (1), the family of all local homomorphisms from $P$ to $\bN$ satisfies the assumption so that, by the argument in \ref{reduce2sss}, even for the general base, we can prove that the nonk\'et triple for a saturated $f$ satisfies the Griffiths transversality. 
  Thus, by the argument in the above of this section, we can prove the following statements.

1. Let $f\colon X \to S$ be a projective, vertical, exact, and log smooth morphism of fs log analytic spaces. 
  Then, for each $q \in \bZ$,  
$$\bigl(H^q_{\bZ}, (R^qf_{\ket*}\omega^{\cdot\ge p}_{X/S,\ket})_{p \in \bZ}, (\ , \ )\bigr)$$
is a prePLH on $S_{\ket}$ which satisfies the Griffiths transversality after pulling back to each point of $S$. 

2.  Let $f\colon X \to S$ be a projective, vertical, saturated, and log smooth morphism of fs log analytic spaces. 
  Then, for each $q \in \bZ$,  
$$\bigl(H^q_{\bZ}, (R^qf_{*}\omega^{\cdot\ge p}_{X/S})_{p \in \bZ}, (\ , \ )\bigr)$$
is a nonk\'et prePLH on $S$ which satisfies the Griffiths transversality after pulling back to each point of $S$. 

  However, as for the positivity, there is a counter example of rank 3 for the analogue of Proposition \ref{p:GT} (1) as explained below.
  Hence the argument in \ref{reduce2sss} does not work for the positivity in case of general base.
  Further, there is also a counter example for the analogue of Proposition \ref{p:GT} (2).
  Thus we cannot expect that the problem is reduced even to weakly semistable case (by log blowing up the base). 

  We describe the counter examples mentioned in the above. 
  Let the notation be as in \cite{KU} 12.2 with $n=3$. 
  Let $a$ be an element of $H'_{0,\bQ}=\bQ$ satisfying
$\langle a,a\rangle'_0=-1$. 
  Let $F'$ be the unique element of $\check D'$. 
  Then, by \cite{KU} {\sc Lemma} 12.2.6 (ii), for $y\in \bR$, $\exp(iyN_a)s(F') \in D$ if and only if $y \not=0$. 

  Now let $N_1=-N_2=N_a$, and let $F=\exp(iN_a)s(F')$. 
  Then $(N_1,N_2,F)$ generates a triple as in \ref{triple} over the fs log point $S:=(\Spec \bC, \bN^2\oplus \bC^{\times})$.
  This triple is not a PLH. 
  In fact, let $y_1, y_2 \in \bR$. 
  Then $\exp(iy_1N_1+iy_2N_2)F =\exp(i(y_1-y_2+1)N_a)s(F')\in D$ if and only if $y_1-y_2+1\not=0$, 
but for any $K \in \bR$, there exist $y_1, y_2 >K$ such that $y_1-y_2+1=0$. 
  On the other hand, for any morphism from the standard log point to $S$, the pullback of this triple is a PLH. 
  In fact, let $m_1, m_2$ be positive integers. 
  Then the pullback of the triple via the morphism induced by $\bN^2 \to \bN; e_j \to m_j$, where $(e_j)_j$ is the canonical basis of $\bN^2$, is a PLH because ${m_1}y-{m_2}y+1\not=0$ for any sufficiently large $y$. 
  (Note that the orthogonality and the Griffiths transversality are always satisfied (cf.\ \cite{KU} 12.2.2 and \cite{KU} {\sc Lemma} 12.2.6 (iii)).)
  Thus this gives a counter example for the analogue of Proposition \ref{p:GT} (1) with respect to the positivity. 

  Further, let $S'$ be the log blow-up of $S$ by the log ideal generated by $e_1, e_2$. 
  Then the pullback of the triple over $S'$ is a PLH because 
\begin{align*}
&(y'_1+y'_2)-y'_2+1=y'_1+1 \not=0 \ \text{ for sufficiently large }y'_1, y'_2, \text { and} \\ 
&y'_1-(y'_1+y'_2)+1=-y'_2+1\not=0 \ \text{ for sufficiently large }y'_1, y'_2.
\end{align*}
  Hence, this gives a counter example for the analogue of Proposition \ref{p:GT} (2) with respect to the positivity. 
\end{para}

\section{Log Picard varieties and log Albanese varieties}\label{s:alb}
  As an application, we construct log Picard variety and log Albanese variety for an $f$ as in Corollary \ref{c:main}. 

\begin{thm}
\label{t:good}
  Let $f\colon X \to S$ be a projective, vertical, saturated and log smooth morphism of fs log analytic spaces. 
  Assume that the log rank of $S$ is $\leq 1$ (cf.\ Theorem $\ref{t:main}$). 
  Then $f$ is good both in the sense of {\rm \cite{KKN}} $7.1$ and in the sense of {\rm \cite{KKN}} $7.2$. 
  The quasi-isomorphisms in the condition $\rm{(i)}$ in {\rm \cite{KKN}} $7.1$ and in the condition $\rm{(i)}$ in {\rm \cite{KKN}} $7.2$ coincide. 
\end{thm}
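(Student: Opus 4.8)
The plan is to unwind the two definitions of goodness in \cite{KKN} 7.1 (log Picard) and \cite{KKN} 7.2 (log Albanese) and to verify their constituent conditions one by one, using that under the present hypotheses all of the required analytic input has already been produced. Since $f$ is projective, vertical, saturated, and log smooth with the log rank of $S$ at most one, both Theorem \ref{t:variant} and Corollary \ref{c:main} apply: the former provides the $E_1$-degeneration of the log Hodge to log de Rham spectral sequence, the local freeness of the sheaves $R^qf_*\omega^p_{X/S}$, their commutation with arbitrary base change $S'\to S$, and the identification of the k\'et data as the pullback of the nonk\'et data; the latter provides the nonk\'et polarized log Hodge structure on each $H^q_{\bZ}$. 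The log Picard side is governed by the weight-one piece, so I would apply Corollary \ref{c:main} with $q=1$, while the log Albanese side is governed by the dual, which Poincar\'e--Lefschetz duality identifies with a quotient of $H^{2n-1}$, where $n$ is the relative dimension of Proposition \ref{p:reldim}.

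First I would construct the quasi-isomorphism asked for in condition (i) of each definition. In both cases it is the canonical comparison between the appropriate truncation of $Rf_*\omega^{\cdot}_{X/S}$ (together with its Hodge filtration) and the corresponding two-term piece. The $E_1$-degeneration of Theorem \ref{t:variant} (1) guarantees that this truncation computes the expected cohomology, and the local freeness of Theorem \ref{t:variant} (2) guarantees that the resulting Hodge filtration is by local direct summands and survives base change. The remaining numerical requirements of \cite{KKN} 7.1 and 7.2---local freeness of the occurring $\cO_S$-modules and their compatibility with base change---follow directly from Theorem \ref{t:variant} (2), while the orthogonality, positivity, and Griffiths transversality required of the weight-one (resp.\ dual) part are exactly the PLH conditions already established in Corollary \ref{c:main}.

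It then remains to prove that the two quasi-isomorphisms of conditions (i) coincide. Both are read off from the same Hodge-filtered log de Rham cohomology, and the link between the Picard datum in weight one and the Albanese datum is Poincar\'e duality, realized by the cup product into the top cohomology followed by the trace map. The compatibility of cup product and of the trace map with the comparison isomorphism, proved in Section \ref{s:sss}, is precisely what is needed to see that this duality pairing carries one truncation isomorphically onto the dual of the other; combined with the fact (Theorem \ref{t:variant} (3)) that the k\'et comparison is the pullback of the nonk\'et one, the two quasi-isomorphisms are identified under the canonical duality.

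I expect the main obstacle to lie in this last coincidence rather than in any of the structural conditions, which reduce formally to Theorem \ref{t:variant} and Corollary \ref{c:main}. The difficulty is bookkeeping: one must match the explicit normalizations in the definitions of \cite{KKN} 7.1 (i) and \cite{KKN} 7.2 (i) and control the signs and Tate twists entering the trace pairing, so that the two maps agree on the nose and not merely up to a scalar. Here the explicit description of the comparison isomorphism and of $c(L)$ recorded in Section \ref{s:sss} should suffice.
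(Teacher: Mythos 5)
Your verification of the structural conditions is essentially the paper's argument: condition (ii) of \cite{KKN} 7.1 follows from Theorem \ref{t:variant} (1) and (2), and the polarization part of condition (iii) follows from Corollary \ref{c:main}. That portion of your proposal is fine.

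The gap is in how you handle condition (i) of each definition and, above all, the coincidence of the two quasi-isomorphisms. You propose to obtain the coincidence from Poincar\'e duality, i.e.\ from the compatibility of the cup product and the trace map established in Section \ref{s:sss}. This does not work as stated, for two reasons. First, the compatibilities of Section \ref{s:sss} are proved only for a projective \emph{strict log deformation over the standard log point}, and they compare the log Betti/log de Rham data with the Koszul complex $K_{\bQ}$ of \cite{F} for the purpose of transporting the polarization; they are not available for the general $f$ of Theorem \ref{t:good} (arbitrary base of log rank $\le 1$, $X$ not a strict log deformation), and they are not statements about the two quasi-isomorphisms occurring in \cite{KKN} 7.1 (i) and 7.2 (i). Second, no duality is actually needed: the existence of both quasi-isomorphisms and their coincidence is a formal consequence of the log Riemann--Hilbert comparison and holds for \emph{any} proper, separated, saturated, log smooth morphism, with no projectivity, no polarization, and no restriction on the log rank of the base; this is exactly what \cite{FN2} 9.2 and 9.3 establish (together with the former half of condition (iii) of \cite{KKN} 7.1, which you do not address separately). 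The paper's proof simply invokes these two results for conditions (i) and the coincidence, and reserves Theorem \ref{t:variant} and Corollary \ref{c:main} for conditions (ii) and the latter half of (iii). To repair your argument, replace the duality/trace-map step by the citation of (or a reproof of) \cite{FN2} 9.2--9.3.
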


\begin{sbrem}
  There is a mistake in \cite{KKN} 7.1. 
  See \cite{FN2} 2.4. 
  In the above and the below, we mean the corrected definition of the goodness.
\end{sbrem}

\begin{pf}
  As is shown in \cite{FN2} 9.2, for any proper, separated, saturated, and log smooth morphism of fs log analytic spaces, the condition (i) in \cite{KKN} 7.1 and the former part of the condition (iii) in \cite{KKN} 7.1 are always satisfied (cf.\ Proposition \ref{sp:sat}). 
  Further, \cite{FN2} 9.3 says that for such a morphism, the condition (i) in \cite{KKN} 7.2 is also satisfied and the 
quasi-isomorphisms in the condition (i) in \cite{KKN} 7.1 and in the condition (i) in \cite{KKN} 7.2 coincide. 
  Next, by Theorem \ref{t:variant} (1) and (2), the condition (ii) in \cite{KKN} 7.1 is satisfied. 
  Finally, by Corollary \ref{c:main}, the latter half of the condition (iii) in \cite{KKN} 7.1 is satisfied. 
\end{pf}

\begin{para}
  Thus we can apply the general theory in Sections 8 and 10 in \cite{KKN} in the same way as in Section 9 in \cite{FN2}. 
  We describe some of them. 
  Let $f$ be as in Theorem \ref{t:good}.
  Then, by Theorem \ref{t:good}, the log Picard variety $A^*_{X/S}$ and the log Albanese variety $A_{X/S}$ are well-defined. 
  See \cite{KKN} 7.3 for their definitions. 
  Further, {\sc Assumption} 8.1 in \cite{KKN} is satisfied by Remark after it. 
  Hence, the conclusions of all results in Sections 8 and 10 in \cite{KKN} are valid for such an $f$.  
  In particular, we have the following. 
\end{para}

  Let the notation be as in \cite{KKN} or \cite{FN2} Section 9. 

\begin{thm}
  Let $X \to S$ be as in Theorem $\ref{t:good}$. 

\smallskip

$\mathrm{(i)}$ Let 
$$\cH^1(X, {\bG}_m)_0=\Ker(\cH^1(X, {\bG}_m) \to \cH^2(X,
{\bZ})),$$
$$\cH^1(X, {\bG}_{m,\log})_0=\Ker(\cH^1(X, {\bG}_{m,\log})\to\tau_*\cH^2(X^{\log}, {\bZ})).$$

Then we have canonical embeddings
$$\cH^1(X, {\bG}_m)_0 \subset A^*_{X/S} \subset \cH^1(X, {\bG}_{m,
\log})_0.$$

\smallskip

$\mathrm{(ii)}$ We have a canonical
isomorphism
$$\cH^1(X, {\bG}_{m,\log})/\cH^1(X, {\bG}_{m,\log})_0 
\simeq \cHom({\bZ}, \cH^2(X)(1)).$$
Here $\cH^2(X)$ is the log Hodge structure on the big site constructed by Corollary $\ref{c:main}$ with $q=2$, 
and $\cHom$ is the $\cHom$ for log Hodge
structures.
\end{thm}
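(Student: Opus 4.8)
The plan is to deduce both parts directly from the general theory of log Picard and log Albanese varieties in \cite{KKN} Sections 8 and 10, exactly as was carried out over the standard log point in \cite{FN2} Section 9. What makes this legitimate for the present, more general, $f$ is Theorem \ref{t:good}: it verifies that $f$ is good in both senses of \cite{KKN} 7.1 and 7.2, so that $A^*_{X/S}$ is defined (\cite{KKN} 7.3) and all conclusions of \cite{KKN} Sections 8 and 10 are at our disposal. Thus the substance of the argument is to quote the relevant structural statements and to match the notation with ours.

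For (i), I would read off the two embeddings from the construction of $A^*_{X/S}$. By definition $A^*_{X/S}$ is cut out inside $\cH^1(X, \bG_{m,\log})_0$, which gives the right-hand inclusion once one checks that the defining kernel condition against $\tau_*\cH^2(X^{\log}, \bZ)$ agrees with the one in \cite{KKN}. The left-hand inclusion $\cH^1(X, \bG_m)_0 \subset A^*_{X/S}$ expresses that ordinary degree-zero invertible sheaves give points of the log Picard variety; this is part of the general picture in \cite{KKN} Section 8, and the only point to verify is that the two degree-zero conditions correspond under the comparison map $\cH^2(X, \bZ) \to \tau_*\cH^2(X^{\log}, \bZ)$.

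For (ii), the new input is Corollary \ref{c:main} applied with $q = 2$: it promotes $\cH^2(X)$ to an honest log Hodge structure on the big site, which is what makes $\cHom(\bZ, \cH^2(X)(1))$ meaningful as a Hom-group of log Hodge structures. I would then invoke the general identification in \cite{KKN} Section 8 that expresses the quotient $\cH^1(X, \bG_{m,\log})/\cH^1(X, \bG_{m,\log})_0$, i.e.\ the ``degree'' (log N\'eron--Severi) part, in terms of weight-two cohomology; the canonical isomorphism is the one carrying this quotient onto $\cHom(\bZ, \cH^2(X)(1))$ after the Tate twist, where the Hom from the weight-zero $\bZ$ selects the integral $(0,0)$-classes of the weight-zero structure $\cH^2(X)(1)$. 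The role of Theorem \ref{t:good} here is precisely that the latter half of condition (iii) in \cite{KKN} 7.1, now verified, guarantees that the log Hodge structure appearing in the \cite{KKN} construction is the one produced by Corollary \ref{c:main}.

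I expect the main obstacle to be bookkeeping rather than any hard argument: one must confirm that the log Hodge structure $\cH^2(X)$ coming from Corollary \ref{c:main} is literally the structure entering the $\cHom$ of the general theory, and that the sign, cup-product and trace normalizations fixed in Section \ref{s:sss} are compatible with the Tate twist $(1)$ in (ii) and with the first-Chern-class map underlying the degree-zero conditions in (i). Once these normalizations are aligned, both statements follow by citation from \cite{KKN} Sections 8 and 10.
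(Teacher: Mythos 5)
Your proposal matches the paper's own treatment: the theorem is deduced by citation from \cite{KKN} Sections 8 and 10, with Theorem \ref{t:good} (goodness in both senses of \cite{KKN} 7.1 and 7.2) and the verification of \cite{KKN} {\sc Assumption} 8.1 supplying the hypotheses, exactly as in \cite{FN2} Section 9. The paper gives no further argument beyond this, so your additional remarks on aligning normalizations are harmless elaboration rather than a divergence.
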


\begin{cor}
We have an exact sequence 
$$0
\to
\cExt^1(A_{X/S}, {\bG}_{m,\log})
\to
\cH^1(X, {\bG}_{m,\log})
\to
\tau_*\cH^2(X^{\log}, {\bZ}) \to \cH^2(X, \cO_X).$$ 
\end{cor}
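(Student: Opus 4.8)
The plan is to read the four-term sequence off from the two parts of the preceding theorem, supplemented by the duality theory of log abelian varieties in \cite{KKN} and a logarithmic Lefschetz $(1,1)$ argument; all of the Hodge-theoretic input needed has already been supplied by Theorems \ref{t:HdRss} and \ref{t:variant} and Corollary \ref{c:main}.

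First I would treat the left end. By the very definition of $\cH^1(X, \bG_{m,\log})_0$ as $\Ker(\cH^1(X, \bG_{m,\log}) \to \tau_*\cH^2(X^{\log}, \bZ))$, the sequence
$$0 \to \cH^1(X, \bG_{m,\log})_0 \to \cH^1(X, \bG_{m,\log}) \to \tau_*\cH^2(X^{\log}, \bZ)$$
is exact, so exactness at the first two spots will follow once $\cH^1(X, \bG_{m,\log})_0$ is identified with $\cExt^1(A_{X/S}, \bG_{m,\log})$. Since $f$ is good (Theorem \ref{t:good}), the general theory of Sections 8 and 10 of \cite{KKN} applies, and the logarithmic Barsotti--Weil formula there describes $\cExt^1(A_{X/S}, \bG_{m,\log})$ as the dual log abelian variety $A^*_{X/S}$. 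Combining this with the second inclusion $A^*_{X/S} \subset \cH^1(X, \bG_{m,\log})_0$ of part (i), the task here is to promote that inclusion to an equality; this equality is exactly the content of the Barsotti--Weil identification in the good case, and gives $\cExt^1(A_{X/S}, \bG_{m,\log}) = \cH^1(X, \bG_{m,\log})_0$.

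Next I would establish exactness at $\tau_*\cH^2(X^{\log}, \bZ)$, i.e.\ that the image of $\cH^1(X, \bG_{m,\log})$ equals the kernel of $\tau_*\cH^2(X^{\log}, \bZ) \to \cH^2(X, \cO_X)$. By part (ii) this image is $\cHom(\bZ, \cH^2(X)(1))$, the sheaf of Hodge classes of the weight-$0$ log Hodge structure $\cH^2(X)(1)$ produced by Corollary \ref{c:main}. The map to $\cH^2(X, \cO_X)$ is the composite of the integral-to-log-de Rham map with the projection onto $\cH^2_{\mathrm{dR}}/F^1 = \cH^2(X, \cO_X)$, the latter quotient being legitimate by the $E_1$-degeneration and local freeness of Theorem \ref{t:variant}. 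Its kernel consists of the integral classes landing in $F^1$, and by the definition of $\cHom$ for log Hodge structures this is precisely $\cHom(\bZ, \cH^2(X)(1))$; matching it with the image computed from (ii) yields the desired exactness.

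The main obstacle is this logarithmic $(1,1)$ identification: one must check, on the big site, that the sheaf of integral classes in $F^1$ really coincides with $\cHom(\bZ, \cH^2(X)(1))$ and that this matches the image from (ii) compatibly with the map into $\cH^2(X, \cO_X)$ induced by the exponential-type sequence for $\bG_{m,\log}$. The delicate point is that the comparison between the integral lattice $\tau_*\cH^2(X^{\log}, \bZ)$ and the Hodge filtration is mediated through $\cO^{\log}$, so one has to track it through the identification of $H^2_{\cO}$ with $\tau_*(H^2_{\bZ} \otimes \cO_S^{\log})$; here the degeneration and base-change statements of Theorems \ref{t:HdRss} and \ref{t:variant}, already proved, are what make the argument rigorous.
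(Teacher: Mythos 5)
Your overall strategy differs from the paper's: the paper does not reprove this sequence at all, but obtains it (together with the preceding theorem) as a direct citation of the results of Sections 8 and 10 of \cite{KKN}, which become applicable once the goodness of $f$ is established in Theorem \ref{t:good}. Reconstructing the argument as you do is legitimate in principle, and your treatment of exactness at $\tau_*\cH^2(X^{\log}, {\bZ})$ has the right shape: it is part (ii) of the preceding theorem plus the logarithmic $(1,1)$ identification, which you correctly flag as the substantive input (it, too, is supplied by \cite{KKN} rather than by anything proved in this paper).

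The genuine gap is at the left end. You need $\cExt^1(A_{X/S}, {\bG}_{m,\log}) \cong \cH^1(X, {\bG}_{m,\log})_0$, and you propose to get it from (a) a Barsotti--Weil identification $\cExt^1(A_{X/S}, {\bG}_{m,\log}) \cong A^*_{X/S}$ and (b) promoting the inclusion $A^*_{X/S} \subset \cH^1(X, {\bG}_{m,\log})_0$ of part (i) to an equality. But (a) and (b) together would force the second inclusion in part (i) to be an equality in general, which it is not: $A^*_{X/S}$ is defined in \cite{KKN} 7.3 as a subfunctor of $\cH^1(X, {\bG}_{m,\log})_0$ cut out by a positivity (polarizability) condition, and in degenerating situations it is a proper subsheaf --- this is precisely why part (i) is stated as a chain of two inclusions rather than equalities. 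So either the log Barsotti--Weil formula does not say what you assert (in \cite{KKN} Section 10 the sheaf $\cExt^1(B, {\bG}_{m,\log})$ for a log complex torus $B$ is computed to be the full ambient quotient, which strictly contains the dual log complex torus in general), or your step (b) is false; either way the argument is circular at this point. What is actually needed is the explicit computation of $\cExt^1(A_{X/S}, {\bG}_{m,\log})$ carried out in \cite{KKN} Section 10; it cannot be recovered from part (i) together with duality of log abelian varieties.
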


\begin{prop}
  Let $X \to S$ be as in Theorem $\ref{t:good}$.
  Assume that all the fibers are connected. 

Then, for a log
complex torus
$B$ over
$S$, the following sequence is exact.
$$0 \to B(S) \to B(X) \to  \Hom(A_{X/S}, B).$$
\end{prop}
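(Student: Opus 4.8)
The plan is to deduce the exact sequence from the universal property of the log Albanese variety furnished by the general theory of \cite{KKN} Sections 8 and 10, which applies to $f$ by Theorem \ref{t:good}. All three terms are abelian groups, since $B$ is a group object among log complex tori over $S$, and both maps are group homomorphisms: the first is pullback $f^{*}$ along $f$, and the second sends a morphism $g\colon X \to B$ over $S$ to the linear part of the induced morphism of Albanese torsors, i.e.\ to the homomorphism of log complex tori $A_{X/S} \to A_{B/S} = B$ obtained by functoriality of the log Albanese. Here I would use that the log Albanese of a log complex torus $B$ is canonically $B$ itself, and that the Albanese torsor of $X$ is functorial for arbitrary morphisms out of $X$, so that the linear part is well defined even though $X$ carries no canonical section. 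It then suffices to check (i) injectivity of $f^{*}$ and (ii) that the kernel of the second map equals the image of $f^{*}$.

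For (i) I would argue as in the classical case. Since $f$ is projective, hence proper, and all its fibers are nonempty and connected, $f$ is surjective; as the underlying space of $B$ over $S$ is separated, two sections $S \to B$ over $S$ that agree after composition with $f$ agree on $f(X) = S$, hence coincide. Concretely, if $s \in B(S)$ satisfies $s\circ f = 0$, then $s$ vanishes on $f(X)=S$, so $s=0$. This is the rigidity $f_{*}\cO_X = \cO_S$ together with $f_{*}\bold{Z}=\bold{Z}$, both valid because $f$ is proper with connected fibers.

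For (ii), the inclusion $\mathrm{im}(f^{*}) \subseteq \ker$ is immediate: a morphism of the form $s\circ f$ is constant along the fibers of $f$, so the induced morphism of Albanese torsors has trivial linear part. For the reverse inclusion I would take $g\colon X \to B$ whose induced homomorphism $A_{X/S}\to B$ vanishes and show that $g$ factors through $f$. Reducing to a fiber, the vanishing of the linear part forces $g$ to be constant along the connected projective fibers of $f$; the resulting fiberwise constant family should then assemble, via the rigidity $f_{*}\cO_X=\cO_S$ and its log multiplicative analogue, into a unique section $s\in B(S)$ with $g=s\circ f$. This descent is most cleanly organized through the description of $B$ as an extension of its constant part by a log torus part, which reduces the statement to the vanishing of $\mathcal{H}^{0}$-type obstructions controlled by the connectedness of the fibers.

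The main obstacle is precisely this descent in the log-analytic setting: one must verify that a morphism with vanishing Albanese linear part is genuinely fiberwise constant, and that the fiberwise constants glue to a morphism of log complex tori over $S$ compatibly with the log structures and the local monodromy. The classical rigidity argument therefore has to be carried out k\'et locally and combined with the structural results for log complex tori from \cite{KKN}; once this is in place, the exactness follows formally from the universal property of $A_{X/S}$.
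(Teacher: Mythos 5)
Your overall strategy --- rigidity of a proper surjection with connected fibers for exactness at $B(S)$, and a factorization through the Albanese for exactness at $B(X)$ --- is the natural one and is in the spirit of what the paper intends: the paper's entire proof is the single remark that the statement ``is proved by the argument in the proof of \cite{KKN} {\sc Proposition} 10.5,'' together with the observation that surjectivity of the last arrow (which the proposition does not claim) remains open even when $f$ has a section. Your injectivity argument via $f_*\cO_X=\cO_S$ is fine.

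There is, however, a genuine gap at the step you yourself flag as ``the main obstacle.'' The log Albanese variety $A_{X/S}$ of \cite{KKN} 7.3 is not defined by a universal mapping property; it is constructed Hodge-theoretically from the polarized log Hodge structure on the cohomology of $f$ (this is exactly why Theorem \ref{t:good} is needed before one may even speak of $A_{X/S}$). Consequently you cannot simply ``deduce the exact sequence from the universal property of the log Albanese'': the assertions that every $g\in B(X)$ has a well-defined linear part $A_{X/S}\to B$, and that $g$ with vanishing linear part descends to a section in $B(S)$, are essentially the content of the proposition, and establishing them is precisely what the cited argument of \cite{KKN} {\sc Proposition} 10.5 does, within the framework where $B(X)$, $B(S)$ and $\Hom(A_{X/S},B)$ are controlled by (extensions of) log Hodge structures and the connectedness of the fibers enters through $H^0$. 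Your sketch replaces that argument by ``should then assemble'' and ``once this is in place, the exactness follows formally,'' which amounts to assuming the conclusion. To complete the proof you would either have to carry out the k\'et-local descent you describe (fiberwise constancy plus gluing compatibly with the log structures and local monodromy), or, as the paper does, reduce to and invoke the argument of \cite{KKN} {\sc Proposition} 10.5.
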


This is proved by the argument in the proof of \cite{KKN} {\sc Proposition} 10.5.  
  A problem is to prove the last arrow is surjective if $X \to S$ has a section.

\noindent Taro Fujisawa

\noindent Tokyo Denki University \\
5 Senju Asahi, Adachi, Tokyo 120-8551 \\ Japan

\noindent fujisawa@mail.dendai.ac.jp

\bigskip

\noindent Chikara Nakayama

\noindent Department of Economics \\ Hitotsubashi University \\
2-1 Naka, Kunitachi, Tokyo 186-8601 \\ Japan

\noindent c.nakayama@r.hit-u.ac.jp
\end{document}